\newtheorem{lem}{Lemma}[section]
\newtheorem{thm}[lem]{Theorem}
\newtheorem{prop}[lem]{Proposition}
\newtheorem{cor}[lem]{Corollary}
\theoremstyle{definition}
\newtheorem{remark}[lem]{Remark}
\newtheorem{definition}[lem]{Definition}
\DeclareMathAlphabet{\curly}{U}{rsfs}{m}{n}
\newcommand{\wideunder}{\rule{0.7em}{0.4pt}}
\newcommand{\End}{\operatorname{End}}
\newcommand{\Sym}{\operatorname{Sym}}
\newcommand{\Pic}{\operatorname{Pic}}
\newcommand{\Hom}{\operatorname{Hom}}
\newcommand{\Q}{\mathbb{Q}}
\newcommand{\C}{\mathbb{C}}
\newcommand{\Z}{\mathbb{Z}}
\newcommand{\N}{\mathbb{N}}
\newcommand{\GL}{\operatorname{GL}}
\newcommand{\SL}{\operatorname{SL}}
\DeclareMathOperator{\Id}{Id}
\newcommand{\PP}{{\mathbb P}}
\newcommand{\lmfdbec}[3]{\href{https://www.lmfdb.org/EllipticCurve/Q/#1/#2/#3}{#1.#2#3}}
\mathchardef\mhyphen="2D
\newcommand{\maarten}[1]{{\color{red} \sf $\spadesuit\spadesuit\spadesuit$ Maarten: [#1]}}
\title{Intermediate modular curves with infinitely many quartic points}
\author{\sc Maarten Derickx}
\address{Maarten Derickx\\
University of Zagreb\\  
Bijeni\v{c}ka Cesta 30 \\
10000 Zagreb\\
Croatia}
\email{maarten@mderickx.nl}
\urladdr{http://www.maartenderickx.nl/}
\author{\sc Petar Orli\'c}
\address{Petar Orli\'c \\
University of Zagreb\\  
Bijeni\v{c}ka Cesta 30 \\
10000 Zagreb\\
Croatia}
\email{petar.orlic@math.hr}
\begin{document}
\begin{abstract}
    For every group $\{\pm1\}\subseteq \Delta\subseteq (\Z/N\Z)^\times$, there exists an intermediate modular curve $X_\Delta(N)$. In this paper we determine all curves $X_\Delta(N)$ with infinitely many points of degree $4$ over $\Q$. To do that, we developed a method to compute possible degrees of rational morphisms from $X_\Delta(N)$ to an elliptic curve.
\end{abstract}

\subjclass{11G18, 11G30, 14H30, 14H51}
\keywords{Modular curves, Tetragonal, Tetraelliptic, Quartic point}

\maketitle

\section{Introduction}\label{introductionsection}

Let $C$ be a smooth curve defined over a number field $K$ and $d$ a positive integer. A point $P\in C$ is of degree $d$ over $K$ if $[K(P):K]=d$. An important property of a curve $C/K$ is whether it has finitely or infinitely many points of degree $d$. Faltings' Theorem solves the case $d=1$.

\begin{thm}[Faltings' Theorem]
Let $K$ be a number field and let $C$ be a non-singular curve defined over $K$ of genus $g\geq2$. Then the set $C(K)$ is finite.
\end{thm}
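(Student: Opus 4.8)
The plan is to follow the Vojta--Bombieri route, which derives finiteness from the Mordell--Weil theorem together with a quantitative Diophantine-approximation argument on the Jacobian, rather than Faltings' original deduction from the Shafarevich and Tate conjectures. First I would fix a rational base point (after a harmless finite extension of $K$: proving finiteness over a larger field suffices, since $C(K)$ only grows) and use the Abel--Jacobi map $j\colon C\hookrightarrow J=\Jac(C)$ to regard rational points of $C$ as elements of the finitely generated group $J(K)$. Tensoring with $\mathbb{R}$ and using the N\'eron--Tate canonical height as the square of a Euclidean norm turns $V=J(K)\otimes\mathbb{R}$ into a finite-dimensional inner-product space, and the positivity of the height (a positive-definite quadratic form modulo torsion) lets me measure angles between points.

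Second, assume for contradiction that $C(K)$ is infinite. Mumford's gap principle already shows that the heights of the image points grow at least geometrically, so I can extract an infinite sequence $P_1,P_2,\dots$ with $h(P_{i+1})\gg h(P_i)$. Because the unit sphere in $V$ is compact, pigeonhole produces two points $P,Q$ of large, rapidly increasing height whose normalized vectors are nearly parallel, i.e. $\langle x_P,x_Q\rangle\ge(1-\varepsilon)\,\lVert x_P\rVert\,\lVert x_Q\rVert$ for a prescribed small $\varepsilon$. The whole strategy is to show that such near-parallelism is impossible.

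Third comes the analytic heart. On the surface $C\times C$ I would build, via Siegel's lemma, a nonzero global section $s$ of a line bundle of bidegree $(d_1,d_2)$ (a combination of pullbacks of an ample divisor and the diagonal) whose coefficients have small height and which vanishes to controlled high order along the diagonal. Evaluating at $(P,Q)$ and estimating with the product formula forces, on the one hand, the index (normalized order of vanishing) of $s$ at $(P,Q)$ to be large; on the other hand, an arithmetic Dyson/Roth-type lemma --- Bombieri's product theorem, or the Faltings--Esnault--Viehweg index estimate --- forces that index to be small. Combining the two yields Vojta's inequality, an upper bound on the normalized inner product $\langle x_P,x_Q\rangle/(\lVert x_P\rVert\,\lVert x_Q\rVert)$ that keeps it away from $1$, which under our pigeonhole choices of $\varepsilon$ and of the height ratio is a flat contradiction. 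Hence $C(K)$ is finite.

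I expect the construction and index estimate of the auxiliary section to be the main obstacle: balancing the bidegree, the vanishing order, and the height of the coefficients so that Siegel's lemma produces a section while the index bound still applies is delicate, and the index bound itself (the arithmetic analogue of Dyson's lemma) is precisely the deep input that replaces the Tate conjecture here. As a fallback I would use Faltings' original argument: reduce Mordell to the Shafarevich conjecture via the Kodaira--Parshin trick, and prove Shafarevich through the Tate isogeny theorem together with the boundedness of the Faltings height in an isogeny class, controlled by semistable reduction and $p$-adic Hodge theory; in that route the hard part is exactly the isogeny theorem and the finiteness of the relevant moduli of abelian varieties.
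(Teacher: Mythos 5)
The paper does not prove this statement at all: Faltings' Theorem is quoted in the introduction as a known classical result (the Mordell conjecture), used only as background to explain why the $d=1$ case of counting low-degree points is settled. So there is no in-paper proof to compare yours against. Your outline is an accurate description of the Vojta--Bombieri proof (Mordell--Weil plus the N\'eron--Tate height on $J(K)\otimes\mathbb{R}$, Mumford's gap principle, the auxiliary section on $C\times C$ via Siegel's lemma, and the arithmetic Dyson-type index bound yielding Vojta's inequality), and your fallback correctly summarizes Faltings' original route through Kodaira--Parshin, Shafarevich, and the Tate isogeny theorem. But as written it is a roadmap, not a proof: every genuinely hard step --- the uniform constants in the gap principle, the construction and height control of the auxiliary section, and above all the product/index theorem --- is invoked by name rather than established, and these are precisely the deep content of the theorem. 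Two smaller points: you should dispose of the case $\operatorname{rank} J(K)=0$ separately (there $C(K)$ injects into the finite group $J(K)_{\mathrm{tors}}$ via the Abel--Jacobi map, so the inner-product-space argument is vacuous), and the contradiction is not that near-parallelism alone is impossible, but that near-parallelism together with both heights large \emph{and} a large height ratio is impossible; the finiteness then comes from covering the unit sphere by finitely many small cones and bounding the points in each. For the purposes of this paper, citing the theorem, as the authors do, is the appropriate treatment.
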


Therefore, the curve $C$ has infinitely many $K$-rational points if and only if $C$ is isomorphic to $\mathbb{P}^1$ ($g=0$) or $C$ is an elliptic curve ($g=1$) with positive $K$-rank. The next theorem solves the case $d=2$.

\begin{thm}[{Harris, Silverman: \cite[Corollary 3]{HarrisSilverman91}}]
    Let $K$ be a number field, and let $C/K$ be a curve of genus at least $2$. Assume that $C$ is neither hyperelliptic nor bielliptic. Then the set of quadratic points on $C/K$ is finite.
\end{thm}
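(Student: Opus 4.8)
The plan is to convert the statement about quadratic points into one about rational points on the symmetric square $\Sym^2 C$ and then bring in Faltings' theorem in its strong form for subvarieties of abelian varieties. A point $P$ of degree $2$ over $K$ comes with its Galois conjugate $P^\sigma$, and the unordered pair $\{P,P^\sigma\}$ is an effective $K$-rational divisor of degree $2$, i.e.\ a $K$-point of $\Sym^2 C$. Conversely a $K$-point of $\Sym^2 C$ is either such a conjugate pair or a sum of two $K$-rational points of $C$; since $g(C)\ge 2$, Faltings' Theorem as stated above makes $C(K)$ finite, so only finitely many $K$-points of $\Sym^2 C$ arise in the second way. Hence it suffices to prove that $(\Sym^2 C)(K)$ is finite whenever $C$ is neither hyperelliptic nor bielliptic.

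Next I would embed $\Sym^2 C$ into the Jacobian. Fixing a base point gives an Abel--Jacobi map $\Sym^2 C\to \Jac(C)$ whose fibres are the complete linear systems $|D|$ attached to the degree-$2$ classes in the image $W_2$. A fibre is positive-dimensional precisely when $C$ carries a $g^1_2$, that is, when $C$ is hyperelliptic. By hypothesis $C$ is not hyperelliptic --- which in particular forces $g\ge 3$, since every genus-$2$ curve is hyperelliptic --- so the map is injective and identifies $\Sym^2 C$ with the smooth surface $W_2\subseteq \Jac(C)$.

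Now I would invoke Faltings' theorem on rational points of subvarieties of abelian varieties: $W_2(K)$ is contained in a finite union of translates of abelian subvarieties of $\Jac(C)$ that lie inside $W_2$. If $W_2(K)$ were infinite, at least one such translate would have positive dimension $\delta\in\{1,2\}$. The case $\delta=2$ would make $W_2\cong \Sym^2 C$ a translate of an abelian surface, hence of irregularity $2$; but the space of global $1$-forms on $\Sym^2 C$ is the space of symmetric $1$-forms on $C$, so its irregularity equals $g\ge 3$, a contradiction. Therefore $\delta=1$, and $W_2$ contains a translate of an elliptic curve $E$ carrying infinitely many $K$-points (hence of positive rank).

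It then remains to deduce that an elliptic curve $E\subseteq \Sym^2 C$ forces $C$ to be bielliptic, contradicting the hypothesis. Pulling the universal degree-$2$ divisor back along $E\hookrightarrow \Sym^2 C$ produces a curve $Z$ with a degree-$2$ map $Z\to E$ together with a second projection $Z\to C$, and the task is to show that this correspondence descends to a genuine degree-$2$ morphism from $C$ onto an elliptic curve. This geometric implication --- that a translate of an elliptic curve inside $\Sym^2 C$ can only come from a bielliptic structure on $C$ (the hyperelliptic alternative being already excluded by the injectivity of the Abel--Jacobi map) --- is the heart of the argument and the step I expect to be the main obstacle; by comparison the reduction to $\Sym^2 C$ and the exclusion of the surface case are formal, once the strong form of Faltings' theorem is granted.
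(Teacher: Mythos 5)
This theorem is not proved in the paper at all: it is quoted verbatim from Harris--Silverman \cite[Corollary 3]{HarrisSilverman91}, so there is no in-paper argument to compare against. Your outline does reproduce the architecture of the original proof: pass from quadratic points to $K$-points of $\Sym^2 C$, use injectivity of $\Sym^2 C\to \Jac(C)$ for non-hyperelliptic $C$ to identify $\Sym^2 C$ with $W_2$, apply Faltings' theorem on subvarieties of abelian varieties to reduce to a positive-dimensional translate of an abelian subvariety inside $W_2$, and rule out the two-dimensional case by the irregularity computation $q(\Sym^2 C)=g\ge 3$. All of those steps are correct as you state them.

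However, the proof as written has a genuine gap, and you identify it yourself: the implication ``a translate of an elliptic curve $E$ sits inside $W_2$ of a non-hyperelliptic curve $C$, hence $C$ is bielliptic'' is asserted but not proved, and it is precisely the non-formal content of Harris--Silverman's Lemma (resting on Abramovich--Harris's analysis of abelian subvarieties of $W_d$). The difficulty is exactly where you point: the incidence curve $Z=\{(e,p)\in E\times C: p\le D_e\}$ gives a degree-$2$ map $Z\to E$ and a finite map $Z\to C$, but a priori the latter has degree $>1$ and the covering involution of $Z/E$ need not descend to $C$, so one does not immediately get a degree-$2$ map out of $C$ itself. The standard resolution is to pass to the Jacobian: the difference map $e\mapsto [D_e-D_{e_0}]$ embeds an elliptic curve $E'$ isogenous to $E$ into $\Jac(C)$, one projects $\Jac(C)$ onto the quotient by the complementary abelian subvariety of $E'$, and one then shows that the composite $C\to\Jac(C)\to E''$ has degree exactly $2$ (the degree-$1$ case being impossible for $g\ge 2$ and higher degree being excluded by comparing with the degree-$2$ fibration $W_2\supseteq E$). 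Until this step is supplied, the argument establishes only that $W_2$ contains a positive-rank elliptic curve, not that $C$ is bielliptic, so the contradiction with the hypothesis is not yet reached.
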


\begin{definition}
    For a curve $C$ defined over a field $K$, the $K$-gonality $\textup{gon}_K C$ is the smallest integer $d$ such that there exists a morphism of degree $d$ from $C$ to $\mathbb{P}^1$ defined over $K$.
\end{definition}

The question of determining whether the curve $C/K$ has infinitely many degree $d$ points over $K$ is closely related to the $K$-gonality of $C$. For example, Frey \cite{frey} proved that if a curve $C$ defined over a number field $K$ has infinitely many points of degree $\leq d$ over $K$, then $\textup{gon}_K C\leq2d$.

The problem of determining degree $d$ points is particularly interesting in the case of modular curves. Modular curves $X(\Gamma)$ are a type of algebraic curves which can be constructed as quotients of the compactified upper half plane $\mathcal{H}^*$ with $\Gamma$, a congruence subgroup of the modular group $\textup{SL}_2(\Z)$. Examples of modular curves are $X(N)$, $X_0(N)$, $X_1(N)$ which correspond to congruence subgroups

\begin{align*}
    \Gamma(N)&=\left\{ 
\begin{bmatrix}
a & b\\
c & d
\end{bmatrix}
\in \SL_2(\Z) : a,d\equiv 1\pmod{N}, \ b,c\equiv 0\pmod{N}
\right\},\\
\Gamma_0(N)&=\left\{ 
\begin{bmatrix}
a & b\\
c & d
\end{bmatrix}
\in \SL_2(\Z) : c\equiv 0\pmod{N} \right\},\\
\Gamma_1(N)&=\left\{ 
\begin{bmatrix}
a & b\\
c & d
\end{bmatrix}
\in \SL_2(\Z) : a,d\equiv 1\pmod{N}, \ c\equiv 0\pmod{N}
\right\}.
\end{align*}

For every group $\Delta\subseteq (\Z/N\Z)^\times$, there exists a modular curve $X_\Delta(N)$ defined over $\Q$. It corresponds to the congruence subgroup 
$$\Gamma_\Delta(N)=\left\{ 
\begin{bmatrix}
a & b\\
c & d
\end{bmatrix}
\in \SL_2(\Z) : (a\textup{ mod } N)\in\Delta, \ c\equiv 0\pmod{N}
\right\}.$$

Since $-I\in\SL_2(\Z)$ acts trivially on the upper half plane $\mathcal{H}^*$, the curves $X_\Delta(N)$ and $X_{\pm\Delta}(N)$ are isomorphic. Therefore, in this paper we will always assume that $-1\in\Delta$.

For every group $\{\pm1\}\subseteq \Delta\subseteq (\Z/N\Z)^\times$, there exists a modular curve $X_\Delta(N)$. From the definition of $\Gamma_\Delta(N)$ it follows that \[X_{\{\pm1\}}(N)=X_{\pm1}(N)\cong X_1(N), \ X_{(\Z/N\Z)^\times}(N)=X_0(N).\] Moreover, if 
\[\{\pm1\}\subseteq \Delta_1\subseteq \Delta_2 \subseteq (\Z/N\Z)^\times,\]
then we have natural projections \[X_1(N)\to X_{\Delta_1}(N)\to X_{\Delta_2}(N)\to X_0(N)\] defined over $\Q$. If $\{\pm1\}\subsetneq \Delta\subsetneq (\Z/N\Z)^\times$, then we call the curve $X_\Delta(N)$ an intermediate modular curve (since it lies between the curves $X_1(N)$ and $X_0(N)$).

It is also possible to define modular curves in another, more general way. For every group $H\subseteq \GL_2(\Z/N\Z)$, there exists a modular curve $X_H$ defined over $\Q$. Since $-I\in\GL_2(\Z/N\Z)$ acts trivially on $\mathcal{H}^*$, we may assume that $-I\in H$ without loss of generality. If $H$ has full determinant (that is, if $\det H=(\Z/N\Z)^\times$), the curve $X_H$ is guaranteed to be geometrically irreducible.

Suppose that $H\subseteq \GL_2(\Z/N\Z)$ such that $-I\in H$ and that $H$ has full determinant. Then there is a congruence subgroup $\Gamma$ such that the curves $X_H$ and $X(\Gamma)$ are isomorphic. It is defined as follows:
$$H_0:=\SL_2(\Z/N\Z)\cap H, \ \Gamma:=\{A\in\SL_2(\Z) : (A \textup{ mod } N) \in H_0\}.$$
It is not hard to check that $\Gamma(N)\subseteq \Gamma$, therefore $\Gamma$ is indeed a congruence subgroup of $\SL_2(\Z)$. Conversely, in the case of intermediate modular curves $X_\Delta(N)$, its isomorphic curve $X_H$ is defined as 
$$H:=\left\{  
\begin{bmatrix}
a & b\\
c & d
\end{bmatrix}
\in \GL_2(\Z/N\Z) : a\in\Delta, \ c=0
\right\}.$$
We can easily see that $-I\in H$ and that $H$ has full determinant.

Low degree points over $\Q$ on modular curves have been extensively studied. We first present the results for the modular curve $X_1(M,N)$. All curves $X_1(M,N)$ with infinitely many points of degree $d\leq6$ were determined by Mazur \cite{mazur77} (for $d=1$), Kenku, Momose, and Kamienny \cite{KM88, kamienny92} (for $d=2$), Jeon, Kim, and Schweizer \cite{JeonKimSchweizer04} (for $d=3$), Jeon, Kim, and Park \cite{JeonKimPark06} (for $d=4$), and Derickx and Sutherland \cite{DerickxSutherland17} (for $d=5,6$). Additionally, Derickx and van Hoeij \cite{derickxVH} determined all curves $X_1(N)$ which have infinitely many points of degree $d=7,8$. 

These results for $X_1(M,N)$ are very important because they determine all possible torsion groups that appear infinitely often for elliptic curves defined over number fields of degree $d\leq6$. Moreover, all possible sporadic torsion groups (that is, those that appear for only finitely many elliptic cures) were determined for $d\leq4$. The paper \cite{Deg3Class} solves the case $d=3$ and the paper \cite{DerickxNajman24} solves the case $d=4$.

Now we consider the modular curve $X_0(N)$. All curves $X_0(N)$ with infinitely many degree $d$ points have been determined for $d\leq4$. Ogg \cite{Ogg74} determined all hyperelliptic curves $X_0(N)$, Bars \cite{Bars99} determined all bielliptic curves $X_0(N)$ and solved the case $d=2$, and Jeon \cite{Jeon2021} solved the case $d=3$. Hwang and Jeon \cite{Hwang2023}, and Derickx and Orlić \cite{DerickxOrlic23} independently solved the case $d=4$ and together \cite{DensityDegree5} made progress on the case $d=5$, leaving only finitely many unsolved levels $N$.

This paper focuses on intermediate curves $X_\Delta(N)$. All such curves with infinitely many degree $d$ points have been determined for $d\leq3$. These results are given below.

\begin{thm}[Jeon: {\cite[Theorem 0.3]{JEON2020272}}]\label{quadraticthm}
    Let $\{\pm1\}\subsetneq \Delta\subsetneq (\Z/N\Z)^\times$. The only modular curve $X_\Delta(N)$ of genus $g>1$ which has infinitely many quadratic points over $\Q$ is the unique hyperelliptic curve $X_{\Delta_1}(21)$, where $\Delta_1=\{\pm1,\pm8\}$.
\end{thm}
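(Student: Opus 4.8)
The plan is to reduce the problem to the hyperelliptic/bielliptic dichotomy and then carry out a finite search. Combining the Harris--Silverman theorem quoted above with its standard converse, a curve $C/\Q$ of genus $g>1$ has infinitely many quadratic points over $\Q$ if and only if it admits a degree-$2$ morphism over $\Q$ either to $\PP^1$ (i.e.\ $C$ is hyperelliptic) or to an elliptic curve $E/\Q$ with $\rk E(\Q)>0$ (i.e.\ $C$ is bielliptic with positive-rank quotient). The hyperelliptic case produces infinitely many quadratic points by pulling back the infinitely many $\Q$-points of $\PP^1$; the bielliptic case does so exactly when $E(\Q)$ is infinite, since then the fibers over $E(\Q)$ give infinitely many quadratic points. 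So the task becomes: classify the intermediate curves $X_\Delta(N)$ of genus $>1$ that are hyperelliptic or bielliptic over $\Q$, and for the bielliptic ones compute the rank of the elliptic quotient.

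First I would cut the problem down to finitely many levels. By Frey's theorem, infinitely many quadratic points forces $\gon_\Q X_\Delta(N)\le 4$, hence also $\gon_\C X_\Delta(N)\le 4$. Using the known lower bounds for the geometric gonality of a modular curve $X(\Gamma)$ in terms of its index $[\SL_2(\Z):\Gamma]$, the inequality $\gon_\C X_\Delta(N)\le 4$ bounds $[\SL_2(\Z):\Gamma_\Delta(N)]$ explicitly. Since this index grows with $N$ and, for each fixed $N$, only finitely many subgroups $\{\pm1\}\subseteq\Delta\subseteq(\Z/N\Z)^\times$ occur, this leaves an explicit finite list of pairs $(N,\Delta)$ to examine.

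Next, on this finite list I would discard the curves of genus $\le 1$ and test each survivor for hyperellipticity and biellipticity. Hyperellipticity is detected from an explicit model or from the canonical embedding; one expects exactly $X_{\Delta_1}(21)$ with $\Delta_1=\{\pm1,\pm8\}$ to emerge as the single hyperelliptic example, which immediately yields infinitely many quadratic points. For biellipticity I would search systematically for degree-$2$ maps to genus-$1$ curves, noting that such involutions on $X_\Delta(N)$ typically arise from Atkin--Lehner operators together with the partial-diamond automorphisms that act on the intermediate curve, and then compute each quotient $E$ explicitly.

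The main obstacle is the bielliptic analysis. Enumerating the involutions of an intermediate curve $X_\Delta(N)$ is more delicate than for $X_0(N)$ or $X_1(N)$, since its automorphism group mixes Atkin--Lehner and partial-diamond operators, and one must be sure to find every bielliptic quotient defined over $\Q$. Once a quotient $E/\Q$ is identified, ruling the curve out requires proving $\rk E(\Q)=0$, so the crux is a correct rank computation for each such $E$ and confirming that no bielliptic quotient of positive rank survives. After checking that every candidate other than the hyperelliptic $X_{\Delta_1}(21)$ is either neither hyperelliptic nor bielliptic, or bielliptic with a rank-$0$ elliptic quotient, the theorem follows.
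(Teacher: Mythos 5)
This statement is not proved in the paper at all: it is imported verbatim as Jeon's Theorem 0.3 from \cite{JEON2020272}, so there is no internal proof to compare against. Your outline is, in substance, the strategy that the cited reference actually follows: reduce via Harris--Silverman and its converse to the dichotomy ``hyperelliptic over $\Q$, or bielliptic over $\Q$ with a positive-rank elliptic quotient,'' cut down to finitely many pairs $(N,\Delta)$ using Frey's bound $\gon_\Q\le 4$ together with the index lower bound on geometric gonality, and then classify the hyperelliptic curves (this part is due to Ishii--Momose, which the present paper cites for gonality $2$) and the bielliptic quotients arising from Atkin--Lehner and partial-diamond involutions, checking ranks. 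The reduction you state is correct, including the point that the genus-$0$ quotient in the hyperelliptic case is honestly $\PP^1$ because $X_\Delta(N)$ has a rational cusp. The only caveat is that your text is a plan rather than a proof: the decisive content of the theorem lies in the finite verification (the complete list of involutions of $X_\Delta(N)$ over $\Q$, the identification of every elliptic quotient, and the rank computations showing none has positive rank), and those steps are asserted (``one expects \dots to emerge'') rather than carried out. As a roadmap it is sound and faithful to how the result is actually established in \cite{JEON2020272}.
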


\begin{thm}[Dalal: {\cite[Theorem 1]{Dalal2023}}]\label{cubicthm}
    Let $\{\pm1\}\subsetneq \Delta\subsetneq (\Z/N\Z)^\times$. The modular curve $X_\Delta(N)$ has infinitely many cubic points over $\Q$ if and only if $g_{X_\Delta(N)}\leq1$ or $N$ and $\Delta$ are in the following list:

    \begin{center}
\begin{longtable}{|c|c|c|}

\hline
\addtocounter{table}{-1}
$N$ & $\Delta$ & $g_{X_\Delta(N)}$\\
    \hline

    $24$ & $\{\pm1,\pm5\}$ & $3$\\
    $24$ & $\{\pm1,\pm7\}$ & $3$\\
    $26$ & $\{\pm1,\pm5\}$ & $4$\\
    $26$ & $\{\pm1,\pm3,\pm9\}$ & $4$\\
    $28$ & $\{\pm1,\pm13\}$ & $4$\\
    $28$ & $\{\pm1,\pm3,\pm9\}$ & $4$\\
    $29$ & $\{\pm1,\pm4,\pm5,\pm6,\pm7,\pm9,\pm13\}$ & $4$\\
    $36$ & $\{\pm1,\pm11,\pm13\}$ & $3$\\
    $37$ & $\{\pm1,\pm6,\pm8,\pm10,\pm11,\pm14\}$ & $4$\\
    $37$ & $\{\pm1,\pm3,\pm4,\pm7,\pm9,\pm10,\pm11,\pm12,\pm16\}$ & $4$\\
    $49$ & $\{\pm1,\pm6,\pm8,\pm13,\pm15,\pm20,\pm22\}$ & $3$\\
    $50$ & $\{\pm1,\pm9,\pm11,\pm19,\pm21\}$ & $4$\\
 
    \hline

\caption*{}
\end{longtable}
\end{center}
\end{thm}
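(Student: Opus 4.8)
The plan is to combine a structural criterion for the infinitude of cubic points with an explicit gonality bound that reduces the classification to a finite, mechanical search. I would first record the criterion underlying every result of this type: for a curve $C/\Q$ of genus $g\geq 2$, the set of degree-$3$ points is infinite if and only if either $C$ is trigonal over $\Q$ (it carries a degree-$3$ morphism to $\PP^1$ defined over $\Q$) or there is a degree-$3$ morphism $C\to E$ over $\Q$ to an elliptic curve $E/\Q$ with $\rk E(\Q)\geq 1$. This is the Abramovich--Harris analysis of the image of $\Sym^3 C$ in $\Jac C$, which is a theorem for $d=3$. Sufficiency of either condition is Hilbert irreducibility: a degree-$3$ pencil, or the pullback of the infinitely many rational points on a positive-rank $E$, yields infinitely many geometrically irreducible degree-$3$ fibres. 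The cases $g_{X_\Delta(N)}\leq 1$ are then immediate, since such a curve has a rational cusp and hence a degree-$3$ map to $\PP^1$; this explains the clause $g_{X_\Delta(N)}\leq 1$ and leaves only $g\geq 2$ to treat.

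Next I would bound the level. By Frey's inequality \cite{frey}, infinitely many cubic points force $\gon_\Q X_\Delta(N)\leq 6$, and since a $\Q$-rational pencil is in particular a complex one, $\gon_\Q\geq\gon_\C$. Abramovich's linear lower bound $\gon_\C X_\Delta(N)\geq \tfrac{7}{800}\,[\PSL_2(\Z):\overline{\Gamma_\Delta(N)}]$ then forces the index to be at most roughly $685$. Only finitely many pairs $(N,\Delta)$ meet this bound, and they can be enumerated together with their genera; discarding the already-settled cases $g\leq 1$ leaves an explicit finite list of candidate curves of genus $\geq 2$.

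For each candidate I would then decide the two conditions of the criterion. For the curves appearing in the table (and thus in the positive list), I would exhibit the witnessing map directly: a $g^1_3$ read off the canonical model when the curve is trigonal, or an explicit degree-$3$ morphism to an elliptic quotient $E$ of the Jacobian $J_\Delta(N)$, together with a verification that $\rk E(\Q)\geq 1$ from the associated weight-$2$ newform and its (analytic) rank. For every remaining candidate I must instead prove non-existence: that $\gon_\Q\geq 4$, so the curve is not trigonal, and that no positive-rank elliptic curve admits a degree-$3$ map from it.

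The crux is this last, negative step. Ruling out trigonality over $\Q$ is delicate because a curve may acquire a $g^1_3$ only over $\overline{\Q}$, or conversely possess one geometrically that fails to descend; I would control this with the Castelnuovo--Severi inequality, which bounds the number and compatibility of low-degree pencils on a fixed canonical model. Ruling out a degree-$3$ map to a positive-rank elliptic curve requires a complete isogeny decomposition of $J_\Delta(N)$, a list of all of its elliptic quotients, provably correct Mordell--Weil ranks for each, and an argument that none of the positive-rank quotients is the target of a degree-$3$ morphism. Assembling a provably exhaustive and rank-correct analysis for every borderline curve, simultaneously for both mechanisms, is where the real difficulty of the theorem lies.
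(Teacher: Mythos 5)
First, a point of context: the paper does not prove this statement at all --- it is quoted verbatim from Dalal \cite[Theorem 1]{Dalal2023} and used as a black box, so there is no internal proof to compare against. Judged on its own, your proposal is a reasonable description of the standard strategy for such classifications (criterion for infinitude of cubic points, Frey plus Abramovich to bound the index, then a finite case check), but it is an outline rather than a proof. The entire content of the theorem is the finite verification --- exhibiting a $\Q$-rational $g^1_3$ or a degree-$3$ map to a positive-rank elliptic curve for each of the twelve listed pairs $(N,\Delta)$, and excluding both mechanisms for every other candidate surviving the index bound --- and you explicitly defer exactly this step (``where the real difficulty of the theorem lies''). Deciding $\Q$-trigonality in genus $4$ and $5$, computing the isogeny decompositions of the relevant $J_\Delta(N)$ with certified ranks, and bounding degrees of maps to elliptic quotients (the analogue of what \Cref{tetraelliptic_condN} and \Cref{propquadraticforms_tetraelliptic} do in degree $4$) is not mechanical, and without it the classification is not established.

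Two smaller inaccuracies. The criterion you invoke --- infinitely many cubic points on a genus $\geq 2$ curve iff it is $\Q$-trigonal or admits a degree-$3$ map over $\Q$ to a positive-rank elliptic curve --- is correct, but it is not literally the Abramovich--Harris theorem, which concerns points of degree $\leq 3$; the exact-degree-$3$ statement needs the refinement via translates of abelian subvarieties of $W^0_3$ (as in \Cref{translate_of_abelian_variety_thm} together with \Cref{DF_AV_dim}, which forces the subvariety to be an elliptic curve, plus an argument that its presence in $W^0_3$ yields a degree-$3$ map). Second, Castelnuovo--Severi does not by itself settle descent of a geometric $g^1_3$: what it gives is uniqueness of the $g^1_3$ for $g\geq 5$ (hence Galois stability), after which one still needs a rational point to kill the Brauer obstruction; for genus $3$ and $4$ curves, which dominate the table, uniqueness fails and the descent question must be handled directly. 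In practice one would instead cite the known $\Q$-gonality classifications of the curves $X_\Delta(N)$ \cite{IshiiMomose, Jeon2007, Orlic2024Intermediate}, which is what makes the trigonality half of the finite check feasible.
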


Furthermore, all curves $X_\Delta(N)$ with $\Q$-gonality equal to $d$ are known for $d=2$ \cite{IshiiMomose}, $d=3$ \cite{Jeon2007}, and $d=4,5$ \cite{Orlic2024Intermediate}. 

In this paper, continuing the presented results regarding intermediate modular curves, we determine all $X_\Delta(N)$ with infinitely many degree $4$ points. Our main result is the following theorem.

\begin{thm}\label{mainthm}
    Let $\{\pm1\}\subsetneq \Delta\subsetneq (\Z/N\Z)^\times$. The modular curve $X_\Delta(N)$ has infinitely many quartic points over $\Q$ if and only if \[N\in\{13,15,16,17,19,20,21,24,25,26,27,28,30,32,36\}\] or $N$ and $\Delta$ are in the following list. For larger groups $\Delta$ it is unfeasible to list all their elements. Therefore, we give the generators and the size of $\Delta$ in those cases. We also present the group structure and the generators of $(\Z/N\Z)^\times$.
    
\begin{center}
\begin{longtable}{|c|c|c|c|c|}

\hline
\addtocounter{table}{-1}
$N$ & $(\Z/N\Z)^\times$ & \textup{generators} & $\Delta$ & $g_{X_\Delta(N)}$\\
    \hline

    $29$ & $C_{28}$ & $2$ & $\{\pm1,\pm4,\pm5,\pm6,\pm7,\pm9,\pm13\}$ & $4$\\
    $33$ & $C_2\times C_{10}$ & $2,10$ & $\{\pm1,\pm2,\pm4,\pm8,\pm16\}$ & $5$\\
    $34$ & $C_{16}$ & $3$ & $\{\pm1,\pm9,\pm13,\pm15\}$ & $5$\\
    $35$ & $C_2\times C_{12}$ & $2,12$ & $\{\pm1,\pm4,\pm6,\pm9,\pm11,\pm16\}$ & $5$\\
     & & & $\{\pm1,\pm6,\pm8,\pm13\}$ & $7$\\
    $37$ & $C_{26}$ & $2$ & $\{\pm1,\pm3,\pm4,\pm7,\pm9,\pm10,\pm11,\pm12,\pm16\}$ & $4$\\
     & & & $\{\pm1,\pm6,\pm8,\pm10,\pm11,\pm14\}$ & $4$\\
    $39$ & $C_2\times C_{12}$ & $2,12$ & $\{\pm1,\pm4,\pm10,\pm14,\pm16,\pm17\}$ & $5$\\
     & & & $\{\pm1,\pm5,\pm8,\pm14\}$ & $9$\\
    $40$ & $C_2\times C_2\times C_4$ & $-1,5,7$ & $\{\pm1,\pm9,\pm11,\pm19\}$ & $5$\\
     & & & $\{\pm1,\pm7,\pm9,\pm17\}$ & $7$\\
     & & & $\{\pm1,\pm3,\pm9,\pm13\}$ & $7$\\
    $41$ & $C_{40}$ & $6$ & $\{\pm1,\pm2,\pm4,\pm5,\pm8,\pm9,\pm10,\pm16,\pm18,\pm20\}$ & $5$\\
    $45$ & $C_2\times C_{12}$ & $-1,2$ & $\{\pm1,\pm4,\pm11,\pm14,\pm16,\pm19\}$ & $5$\\
    $48$ & $C_2\times C_2\times C_4$ & $-1,5,7$ & $\{\pm1,\pm11,\pm13,\pm23\}$ & $5$\\
     & & & $\{\pm1,\pm7,\pm17,\pm23\}$ & $7$\\
     & & & $\{\pm1,\pm5,\pm19,\pm23\}$ & $7$\\
    $49$ & $C_{42}$ & $3$ & $\{\pm1,\pm6,\pm8,\pm13,\pm15,\pm20,\pm22\}$ & $3$\\
    $50$ & $C_{20}$ & $3$ & $\{\pm1,\pm9,\pm11,\pm19,\pm21\}$ & $4$\\
    $53$ & $C_{52}$ & $2$ & $\Delta=\left<4\right>$, $\#\Delta=26$ & $8$\\
    $55$ & $C_2\times C_{20}$ & $-1,2$ & $\Delta=\left<-1,4\right>$, $\#\Delta=20$ & $9$\\
    $61$ & $C_{60}$ & $2$ & $\Delta=\left<4\right>$, $\#\Delta=30$ & $8$\\
    $64$ & $C_{32}$ & $3$ & $\Delta=\left<9\right>$, $\#\Delta=16$ & $5$\\
    $65$ & $C_4\times C_{12}$ & $2,12$ & $\Delta=\left<2,12^2\right>$, $\#\Delta=24$ & $9$\\
     & & & $\Delta=\left<4,12\right>$, $\#\Delta=24$ & $11$\\
     & & & $\Delta=\left<4,24\right>$, $\#\Delta=24$ & $11$\\
    $75$ & $C_2\times C_{20}$ & $-1,2$ & $\Delta=\left<-1,4\right>$, $\#\Delta=20$ & $9$\\
    $89$ & $C_{88}$ & $3$ & $\Delta=\left<9\right>$, $\#\Delta=44$ & $9$\\
    $101$ & $C_{100}$ & $2$ & $\Delta=\left<4\right>$, $\#\Delta=50$ & $16$\\
 
    \hline
\caption*{}
\end{longtable}
\end{center}
\end{thm}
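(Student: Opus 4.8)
The plan is to reduce the bidirectional classification to two geometric conditions and then verify them for a finite, explicit list of candidate levels. The governing principle is the following dichotomy: a curve $C/\Q$ of genus $\geq 2$ has infinitely many quartic points over $\Q$ if and only if it is \emph{tetragonal} over $\Q$ (admits a degree-$4$ morphism to $\PP^1$ defined over $\Q$) or it admits a degree-$4$ morphism over $\Q$ to an elliptic curve $E$ with $\rk E(\Q)>0$ (\emph{tetraelliptic of positive rank}). This is obtained by applying Faltings' theorem on subvarieties of abelian varieties to the image $W_4\subseteq \Jac(C)$ of $\Sym^4 C$ under the Abel--Jacobi map: an infinite set of quartic points produces either a rational curve or a translate of a positive-rank abelian subvariety inside $W_4$; for quartic points the higher-dimensional abelian families cannot yield irreducible degree-$4$ divisors, and after discarding the families of reducible divisors (such as a $g^1_3$ plus a fixed rational point) the surviving irreducible families are exactly the two listed. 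Note that this already subsumes the hyperelliptic and bielliptic cases: composing a degree-$2$ map to $\PP^1$ with squaring, respectively a degree-$2$ map to $E$ with $E\to\PP^1$, produces a genuine degree-$4$ morphism to $\PP^1$ whose fibres over $\Q$-rational points are irreducible for a density-one set of points by Hilbert irreducibility, so these give infinitely many quartic points regardless of rank.

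First I would cut the problem down to finitely many pairs $(N,\Delta)$. By Frey's theorem \cite{frey}, infinitely many quartic points forces $\gon_\Q X_\Delta(N)\leq 8$, hence $\gon_\C X_\Delta(N)\leq 8$; combined with Abramovich's linear lower bound for the $\C$-gonality of a modular curve in terms of the index $[\SL_2(\Z):\Gamma_\Delta(N)]$, this bounds the index and therefore leaves only finitely many $(N,\Delta)$ to examine, which I would enumerate explicitly. For the tetragonal condition I would then invoke the existing determinations of $X_\Delta(N)$ with $\gon_\Q\leq 5$ in \cite{IshiiMomose,Jeon2007,Orlic2024Intermediate}: every curve with $\gon_\Q\leq 4$ is tetragonal unless it is trigonal without a base-point-free $g^1_4$, a situation occurring for only finitely many trigonal levels and settled by an explicit Riemann--Roch computation on a canonical model.

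The core of the argument, and the genuinely new ingredient, is detecting the tetraelliptic curves. A non-constant morphism $X_\Delta(N)\to E$ over $\Q$ forces $E$ to be, up to isogeny, an elliptic factor of $\Jac(X_\Delta(N))$. Since $X_\Delta(N)$ is the quotient of $X_1(N)$ by the diamond operators $\langle d\rangle$ with $d\in\Delta$, its cusp forms are exactly the weight-$2$ forms on $X_1(N)$ whose nebentypus $\chi$ is trivial on $\Delta$, so by Eichler--Shimura the elliptic factors correspond to the rational newforms of level dividing $N$ satisfying this condition; these give a short list of candidate elliptic curves $E$, whose ranks I would read off from the LMFDB. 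For each such $E$ I would compute the possible degrees of a $\Q$-morphism $\phi\colon X_\Delta(N)\to E$: such a $\phi$ induces $\phi^*\colon E\to\Jac(X_\Delta(N))$ and $\phi_*\colon\Jac(X_\Delta(N))\to E$ with $\phi_*\circ\phi^*=[\deg\phi]$, so $\deg\phi$ can be extracted from the induced map on the relevant Hecke-stable piece of the period lattice together with the Rosati/intersection pairing. This is the method announced in the abstract; it yields all attainable degrees, and $X_\Delta(N)$ contributes infinitely many quartic points precisely when it admits a degree-$2$ morphism to some elliptic curve (bielliptic, hence tetragonal, rank irrelevant) or a degree-$4$ morphism to an elliptic curve of positive rank (tetraelliptic).

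Assembling the two directions then completes the proof: for each $(N,\Delta)$ on the stated list I would exhibit either a degree-$4$ map to $\PP^1$ or a degree-$4$ map to a positive-rank $E$, and for each of the finitely many remaining candidates I would show no such map exists, so that the curve has only finitely many quartic points. I expect the main obstacle to be precisely the completeness and rigour of the degree computation for maps to elliptic curves: one must determine the \emph{minimal} degree over $\Q$ rather than merely some degree, correctly account for the multiplicity with which $E$ occurs in $\Jac(X_\Delta(N))$ and for the distinction between $\bar\Q$- and $\Q$-rational morphisms, and carry out the lattice and Hecke computations exactly. A secondary difficulty is deciding tetragonality for the higher-genus curves on the list (genus up to $16$) lying beyond the range of the existing gonality classifications, where one must produce explicit models and search for base-point-free $g^1_4$'s defined over $\Q$.
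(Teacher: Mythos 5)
Your skeleton (exhibit a degree~$4$ map to $\PP^1$ or to a positive-rank elliptic curve for the listed pairs; rule both out for the finitely many remaining candidates) matches the paper, but two steps as you state them have genuine gaps. First, the opening dichotomy --- that \emph{any} curve of genus $\geq 2$ has infinitely many quartic points if and only if it is $\Q$-tetragonal or positive-rank tetraelliptic --- is asserted, not proved, and the phrase ``after discarding the families of reducible divisors'' is exactly where the real difficulty sits: by \Cref{translate_of_abelian_variety_thm} and \Cref{DF_AV_dim} the abelian subvariety $A\subset W_4^0$ can be $2$-dimensional, and converting a translate of $A$ inside $W_4^0$ into an actual morphism of degree $4$ is the content of \Cref{kadetsvogt1.4}, which only delivers the dichotomy when $g\geq 8$ (this is \Cref{degree4morphismcor}). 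The paper never needs the dichotomy below genus $8$: it checks that every $X_\Delta(N)$ \emph{not} in the list (for the surviving levels $N$) has genus $\geq 8$ and, by \cite[Theorem 1.3]{Orlic2024Intermediate}, $\Q$-gonality $>4$, and only then reduces to the tetraelliptic question. You need to make this genus observation explicitly, or your ``only if'' direction is unsupported for any low-genus candidate. (Your finiteness reduction via Frey plus Abramovich also works but is far less efficient than the paper's: since $X_\Delta(N)\to X_0(N)$ is a $\Q$-morphism, one can simply quote the known list of $X_0(N)$ with infinitely many quartic points, \Cref{quarticthmx0n}.)

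Second, the core computation --- the possible degrees of $\Q$-morphisms $X_\Delta(N)\to E$ --- is left as a black box (``extracted from the period lattice together with the Rosati/intersection pairing''), and the one ingredient that makes it work is missing. The degree map extends to a positive-definite quadratic form on $\Hom_\Q(J_\Delta(N),E)\cong\Z^n$ (\Cref{prop:derickx-orlic-quadratic-form}), and the paper's key new lemma (\Cref{lem:degree_pairing_covering_curve}) says this form is $\deg(u_\Delta)=\#((\Z/N\Z)^\times/\Delta)$ times the already-computable form for $X_0(N)$ --- \emph{provided} the map $E^n\to J_0(N)\to J_\Delta(N)$ is injective, i.e.\ $E^n$ meets the kernel $\Sigma(\Delta)$ of $J_0(N)\to J_\Delta(N)$ trivially. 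This Shimura-subgroup condition is not automatic (the remark after \Cref{shimurakernelmagma} lists many $E$ with $\Sigma(N)\cap E\neq 0$); the paper verifies it using the action of $w_N$ for odd analytic rank and by direct computation. Without this verification your basis for $\Hom_\Q(J_\Delta(N),E)$ and the resulting minimal degree (e.g.\ \Cref{deltastrongweildegree}, where the answer is divided by $\#(E\cap\Sigma(\Delta))$) can be wrong. Finally, your worry about tetragonality of the genus-up-to-$16$ curves in the list is misplaced: those curves (e.g.\ $N=101$) get their quartic points from the degree-$2$ projection to $X_0(N)$ composed with the infinitely many quadratic points on $X_0(N)$, not from a $g^1_4$.
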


This paper is organized as follows. \begin{itemize}
    \item \Cref{preliminariessection} contains preliminary results that eliminate all but finitely many levels $N$ in \Cref{mainthm}.
    \item \Cref{infinitelyquarticsection} contains results which prove that all curves $X_\Delta(N)$ listed in \Cref{mainthm} have infinitely many quartic points.
    \item \Cref{tetraellipticsection} contains results which prove that all other curves $X_\Delta(N)$ do not admit a degree $4$ rational morphism to a positive rank elliptic curve $E$. To do that, we determine all possible degrees of rational morphisms to $E$.
    \item In \Cref{sectionmainthm} we combine these results to prove \Cref{mainthm}.
\end{itemize}

In \Cref{tetraellipticsection} we have two cases regarding the conductor of $E$: $\textup{Cond }E=N$ and $\textup{Cond E}<N$. In the case $\textup{Cond }E=N$ we determine in \Cref{deltastrongweildegree} the minimal degree of a parametrization $f': X_\Delta(N)\to E$. This is an important result because it is a generalization of the modular degree of elliptic curves over $\Q$ (the modular degree data is known and available on LMFDB). It is particularly interesting for curves $X_1(N)$.

\begin{prop}[{Special case of \Cref{deltastrongweildegree}}]
    Let $E/\Q$ be a strong Weil curve of conductor $N$. Suppose that $d$ is the minimal degree of a parametrization $f: X_0(N)\to E$. If $E'/\Q$ is a strong Weil curve for $X_1(N)$ in the isogeny class of $E$, then the minimal degree of a parametrization $f':X_1(N)\to E'$ equals \[\frac{d\cdot\varphi(N)}{\#(E\cap \Sigma(N))},\] where $\varphi$ is the Euler's totient function and $\Sigma(N)$ is the Shimura subgroup of $J_0(N)$ (defined in \Cref{section_condN}).
\end{prop}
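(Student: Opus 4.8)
The plan is to reduce everything to a comparison of modular parametrizations through the degeneracy covering $\pi\colon X_1(N)\to X_0(N)$. First I would record the standard facts: $\pi$ has degree $\varphi(N)/2$, it induces maps $\pi^{*}\colon J_0(N)\to J_1(N)$ and $\pi_{*}\colon J_1(N)\to J_0(N)$ with $\pi_{*}\circ\pi^{*}=[\deg\pi]$, and by definition $\Sigma(N)=\ker\pi^{*}$. Write $\lambda\colon J_0(N)\to E$ for the optimal quotient attached to the strong Weil curve $E$, so the modular parametrization $\phi\colon X_0(N)\to E$ of degree $d$ is $X_0(N)\hookrightarrow J_0(N)\xrightarrow{\lambda}E$, and similarly $\lambda'\colon J_1(N)\to E'$ and $\phi'\colon X_1(N)\to E'$ of degree $d'$ for the strong Weil curve $E'$. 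The reduction I rely on is that, among all elliptic curves isogenous to $E$, the optimal quotient minimizes the degree of a parametrization, so $d'$ is exactly the quantity to compute.

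Next I would introduce the comparison map $\phi\circ\pi\colon X_1(N)\to E$, of degree $d\cdot\varphi(N)/2$, whose induced map on Jacobians is $\lambda\circ\pi_{*}\colon J_1(N)\to E$. Because the newform attached to $E$ occurs with multiplicity one in $J_1(N)$ (its trivial-character, level-$N$ part is a single copy of $E$), the $E$-isotypic abelian subvariety of $J_1(N)$ is unique, and the universal property of the optimal quotient $E'$ lets me factor $\lambda\circ\pi_{*}=w\circ\lambda'$ for a unique isogeny $w\colon E'\to E$. On the level of curves this reads $\phi\circ\pi=w\circ\phi'$ up to translation, hence
\[
d\cdot\frac{\varphi(N)}{2}=\deg(w)\cdot d'.
\]
So the whole problem comes down to computing $\deg(w)$.

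To compute $\deg(w)$ I would dualize the identity $w\circ\lambda'=\lambda\circ\pi_{*}$, obtaining $(\lambda')^{\vee}\circ w^{\vee}=(\pi_{*})^{\vee}\circ\lambda^{\vee}$ as maps $E^{\vee}\to J_1(N)^{\vee}$. Optimality of the two quotients makes $\lambda^{\vee}$ and $(\lambda')^{\vee}$ closed immersions, so the left-hand side has kernel $\ker w^{\vee}$ of order $\deg w$, while, using the autoduality $(\pi_{*})^{\vee}=\pi^{*}$ of the Jacobians, the right-hand side has kernel $A_E\cap\Sigma(N)$, where $A_E=\phi^{*}(E)\subset J_0(N)$ is the image of the injective pullback $\phi^{*}$. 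Since the two maps coincide, $\deg w=\#(A_E\cap\Sigma(N))=\#(E\cap\Sigma(N))$, and substituting into the displayed relation yields $d'=\dfrac{d\cdot\varphi(N)/2}{\#(E\cap\Sigma(N))}$; matching the stated constant is then a matter of unwinding the normalization of $\Sigma(N)$ fixed in \Cref{section_condN} (in particular the role of $-1\in(\Z/N\Z)^{\times}$).

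The main obstacle is the identification $\deg w=\#(E\cap\Sigma(N))$. It hinges on three points I would need to pin down carefully: the multiplicity-one statement, which guarantees that $\pi^{*}(A_E)$ is exactly the optimal subvariety $A_{E'}\subset J_1(N)$ rather than a proper isogenous copy; the translation of \emph{optimal quotient} into the statements that $\lambda^{\vee},(\lambda')^{\vee}$ are closed immersions and $\phi^{*}$ is injective; and the exact bookkeeping of the constant, namely the covering degree $\varphi(N)/2$ together with the precise normalization of the Shimura subgroup. Everything else—the factorization through the optimal quotient, the duality relations, and the degree identity for the composite—is formal.
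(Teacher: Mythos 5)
Your argument is essentially the paper's own proof of \Cref{deltastrongweildegree} (of which this statement is the case $\Delta=\{\pm1\}$): both factor the composite $X_1(N)\to X_0(N)\to E$ through the strong Weil parametrization of $E'$ via an isogeny $w\colon E'\to E$, identify the kernel of the pulled-back composite with $E\cap\Sigma(N)$, and use injectivity of the strong Weil pullback to conclude $\deg w=\#(E\cap\Sigma(N))$. Your dualization of the optimal-quotient maps is just the dual formulation of the paper's direct computation with $g^*=(f')^*\circ\varphi^*$, so the two proofs are the same in substance.

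The loose end you leave is not where you locate it. The constant you derive, $d\cdot\varphi(N)/2$ in the numerator, is the correct one: the covering $X_1(N)=X_{\{\pm1\}}(N)\to X_0(N)$ has degree $\#((\Z/N\Z)^\times/\{\pm1\})=\varphi(N)/2$, and your formula agrees exactly with \Cref{deltastrongweildegree} specialized to $\Delta=\{\pm1\}$ (note $\Sigma(\{\pm1\})=\Sigma(N)$). There is no normalization of the Shimura subgroup left to unwind --- $\Sigma(N)$ is unambiguously $\ker(J_0(N)\to J_1(N))$ --- and the role of $-1$ is already accounted for in the covering degree $\varphi(N)/2$. The displayed statement, with $\varphi(N)$ rather than $\varphi(N)/2$, is simply inconsistent with the general proposition it claims to specialize: for $N=11$, $E=X_0(11)$, $d=1$ and $E\cap\Sigma(11)\cong C_5$, the minimal degree of a parametrization $X_1(11)\to X_1(11)$ is $1=\frac{1\cdot 5}{5}$, not $\frac{1\cdot 10}{5}=2$. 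So your proof is sound; do not try to manufacture the missing factor of $2$ from the definition of $\Sigma(N)$.
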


Since in \cite{Orlic2024Intermediate} all intermediate curves $X_\Delta(N)$ with $\Q$-gonality equal to $5$ were also determined, along with those with $\Q$-gonality equal to $4$, one might ask if it is possible to determine all curves $X_\Delta(N)$ with infinitely many quintic points. However, we encounter several problems with the methods we use in this paper.

First, in all the cases when $X_\Delta(N)$ has infinitely many quartic points, we cannot use any of the results about the minimum density degree (defined in \Cref{densitydegreedef}), such as \Cref{kadetsvogt1.4} and other results in \cite{KadetsVogt}. There are results that characterize the curves with infinitely many degree $d$ points, for example:

\begin{thm}[{\cite[Theorem 4.2. (1)]{BELOV}}]\label{translate_of_abelian_variety_thm}
    Let $C$ be a curve over a number field $K$. We define \[W_d^r(C):=\{D\in\Pic^d(C): \ell(D)>r\},\] \[\phi_d:\Sym^d(C)\to\Pic^d (C), \ x_1+\ldots+x_d\mapsto[x_1+\ldots+x_d].\] Then $C$ has infinitely many degree $d$ points over $K$ if and only if at least one of the following two statements holds:
    \begin{enumerate}[(1)]
        \item There exists a map $C\to \mathbb{P}^1$ of degree $d$ over $K$.
        \item There exists a degree $d$ point $x\in C$ and a positive rank abelian subvariety $A\subset \Pic^0 (C)$ such that $\phi_d(x_1+\ldots+x_d)+A\subset W^0_dC$, where $x_1,\ldots,x_d$ are the Galois conjugates of $x$.
    \end{enumerate} 
\end{thm}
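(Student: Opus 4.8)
The plan is to recast the question in terms of $K$-rational points on the symmetric power $\Sym^d C$ and their images under the Abel--Jacobi map $\phi_d$, and then to apply Faltings' theorem on subvarieties of abelian varieties (the Mordell--Lang statement). The basic dictionary is that a degree $d$ point of $C$ is exactly a Galois-stable effective divisor of degree $d$ on which $\Gal(\overline{K}/K)$ acts transitively, i.e.\ an \emph{irreducible} $K$-point of $\Sym^d C$; such a point maps under $\phi_d$ into $W^0_d(C)\subseteq\Pic^d(C)\cong\Jac(C)$, and the fibre of $\phi_d$ over an effective class $[D]$ is the complete linear system $|D|\cong\PP^{\ell(D)-1}$. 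Keeping track of the distinction between irreducible $K$-points of $\Sym^d C$ and arbitrary effective $K$-divisors of degree $d$ is the recurring technical difficulty, and I expect it to be the main obstacle throughout.

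For the easy (``if'') direction I would argue as follows. If (1) holds, a degree $d$ morphism $f\colon C\to\PP^1$ over $K$ has, by the Hilbert Irreducibility Theorem, irreducible fibres over infinitely many $t\in\PP^1(K)$, and each such fibre is a degree $d$ point. If (2) holds, then $A$ has positive rank, so $A(K)$ is infinite; writing $D_x=x_1+\dots+x_d$, for every $a\in A(K)$ the class $\phi_d(D_x)+a$ lies in $W^0_d(C)$ and is $K$-rational, hence is represented by an effective $K$-divisor $E_a$. This produces infinitely many $K$-points of $\Sym^d C$, and the task is to show that infinitely many of the $E_a$ are irreducible degree $d$ points. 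I would do this by showing that the decomposable locus, namely the union of the images of the addition maps $\Sym^{d_1}C\times\Sym^{d_2}C\to\Sym^d C$ with $d_1+d_2=d$ and $d_i\ge 1$, cannot contain the entire translate $\phi_d(D_x)+A$, using the irreducibility of $x$ together with the irreducibility of $A$.

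The substantive (``only if'') direction is where Faltings enters. Assume $C$ has infinitely many degree $d$ points and let $\Sigma\subseteq W^0_d(C)(K)\subseteq\Jac(C)(K)$ be the set of their divisor classes. If $\Sigma$ is finite, then infinitely many of these points share a single class $[D]$, so $|D|\cong\PP^{\ell(D)-1}$ has infinitely many $K$-points, forcing $\ell(D)\ge 2$; a positive-dimensional complete linear system whose general member is one of our irreducible degree $d$ divisors is base-point-free and yields a degree $d$ map $C\to\PP^1$, which is (1). If $\Sigma$ is infinite, Faltings' theorem shows its Zariski closure is a finite union $\bigcup_i (b_i+A_i)$ of translates of abelian subvarieties $A_i\subseteq\Jac(C)$ contained in $W^0_d(C)$, and some translate $b+A$ has $\dim A\ge 1$ and carries infinitely many $K$-points, whence $\rk A>0$. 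I would then invoke the Abramovich--Harris style dichotomy for such a translate inside $W^0_d$: either $\phi_d$ restricted to $\phi_d^{-1}(b+A)$ has positive-dimensional generic fibre, so the divisors move in a positive-dimensional linear system and I extract a degree $d$ map $C\to\PP^1$, giving (1); or this restriction is generically finite, so $b+A$ is a translate of the positive-rank abelian variety $A$ lying in $W^0_d$, and taking one of the genuine degree $d$ points $x$ whose class lies on $b+A$ gives $\phi_d(D_x)+A\subseteq W^0_d$, which is (2).

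The main obstacle, as flagged, is precisely the bookkeeping separating honest degree $d$ points from decomposable divisors: Faltings' theorem controls $K$-points of $\Sym^d C$ through their images in $\Jac(C)$, whereas the theorem concerns single closed points of degree exactly $d$. The two delicate steps are (i) in the ``if'' direction for (2), guaranteeing that infinitely many of the $E_a$ are irreducible rather than splitting off lower-degree pieces, and (ii) in the ``only if'' direction, ensuring that the base class in (2) can be taken to be $\phi_d$ of an actual degree $d$ point and that the positive-dimensional-fibre case really produces a degree $d$ (not merely lower-degree) morphism to $\PP^1$. Both require the stratification of $\Sym^d C$ by partition type, a careful analysis of base loci of the relevant linear systems, and a comparison of which strata can account for infinitely many $K$-points.
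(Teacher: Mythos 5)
This statement is quoted verbatim from \cite[Theorem 4.2(1)]{BELOV} and the paper offers no proof of it, so there is no internal argument to compare against. Your outline does follow the standard route taken in that reference: Faltings' theorem on subvarieties of abelian varieties applied to the image of the degree $d$ points in $\Pic^d(C)$, Hilbert irreducibility for case (1), and the analysis of positive-dimensional linear systems when infinitely many points share a class. The ``only if'' direction as you describe it is essentially sound (modulo routine care with the fact that $|D|$ is a priori only a Brauer--Severi twist of $\PP^{\ell(D)-1}$, which is harmless here because the degree $d$ points themselves supply $K$-points of the relevant fibres).

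There is, however, a genuine gap at exactly the step you flag as the main difficulty, and the fix you propose would fail. To show that infinitely many of the effective $K$-divisors $E_a$ in the classes $\phi_d(D_x)+a$ are irreducible, you plan to show that ``the decomposable locus, namely the union of the images of the addition maps $\Sym^{d_1}C\times\Sym^{d_2}C\to\Sym^d C$, cannot contain the entire translate $\phi_d(D_x)+A$.'' But over $\overline{K}$ every effective divisor of degree $d\geq 2$ is a sum of two effective divisors of smaller degree, so that union of images is \emph{all} of $\Sym^d C$, and its image under $\phi_d$ is all of $W^0_d(C)$. Decomposability of a $K$-point of $\Sym^d C$ is a Galois-theoretic condition on $K$-points --- membership in the image of the $K$-points of the products --- not membership in a proper closed subvariety, so no dimension or irreducibility count against the translate can rule it out; ``the irreducibility of $x$ together with the irreducibility of $A$'' is not enough. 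Closing this step requires further arithmetic input, e.g.\ pigeonholing on the degree $e$ of a $K$-rational component of the $E_a$, applying Faltings again to the resulting infinite set of component classes in $W^0_e\times W^0_{d-e}$, and running an induction on $d$ that terminates either in a contradiction or in case (1); a naive Hilbert-irreducibility statement for the degree $d$ cover of the translate by the universal divisor is not available, since irreducibility of fibres over rational points fails for covers of abelian varieties in general. As written, the backward implication for (2) is therefore not established.
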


\begin{prop}[{\cite[Proposition 3.6]{DebarreFahlaoui}}]\label{DF_AV_dim}
    Let $C$ be a curve over $\C$ of genus $g$. Assume that $W_d^r(C)$ contains an abelian variety $A$ and that $d\leq g-1+r$. Then $\dim A\leq d/2-r$.
\end{prop}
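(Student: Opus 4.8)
The plan is to read this as a strengthening of Clifford's theorem to positive-dimensional families. Indeed, the case $\dim A=0$ is a single class $D$ with $\ell(D)>r$, and the hypothesis $d\le g-1+r$ makes $D$ special, since $\ell(K-D)=\ell(D)-d+g-1\ge g-d+r\ge 1$; Clifford then gives $r\le d/2$, which is exactly $\dim A\le d/2-r$. For positive-dimensional $A$ the corresponding general bound is Martens' inequality $\dim W_d^r\le d-2r$, and our task is to improve it to $d/2-r$ by using that $A$ is an \emph{abelian} variety. So I would first write $A=D_0+B$ with $B\subseteq\Pic^0(C)=\Jac(C)$ an abelian subvariety of dimension $a:=\dim A$. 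Since $\ell$ is upper semicontinuous and $A$ is irreducible, a general $D\in A$ attains the minimal value $\ell(D)=r'+1$ with $r'\ge r$; replacing $r$ by $r'$ only strengthens the desired conclusion, so I assume $\ell(D)=r+1$ for general $D\in A$.

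\textbf{The infinitesimal input.} The engine is the Riemann--Kempf description of the tangent spaces to the Brill--Noether loci. At a point $D$ with $\ell(D)=r+1$, inside $T_D\Pic^d(C)=H^1(\mathcal O_C)=H^0(C,K)^\vee$ one has $T_D W_d^r=(\im\mu_{0,D})^\perp$, where
\[
  \mu_{0,D}\colon H^0(C,D)\otimes H^0(C,K-D)\longrightarrow H^0(C,K)
\]
is the Petri (cup-product) map; moreover the base-point-free pencil trick (i.e.\ the proof of Clifford's theorem, after removing the base locus of $|D|$, which only helps) bounds this pointwise by $\dim T_D W_d^r=\dim(\im\mu_{0,D})^\perp\le d-2r$. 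The crucial use of the abelian structure is that the tangent spaces of $A$ are all \emph{parallel}: under the canonical trivialization of $T\Pic^d(C)$ by $H^0(K)^\vee$, the space $T_DA$ equals one fixed $a$-dimensional subspace $W=T_0B$ for every $D$. Containment $A\subseteq W_d^r$ then forces $W\subseteq(\im\mu_{0,D})^\perp$, equivalently $\im\mu_{0,D}\subseteq W^\perp=:L$ with $\operatorname{codim}L=a$, \emph{simultaneously for all} general $D\in A$. Already this gives the Martens bound $a\le d-2r$ for $A$; dually, when $r=0$ it says that a single $(a-1)$-plane $\Lambda=\PP(W)$ lies in the canonical span $\langle\phi_K(\Supp D)\rangle\cong\PP^{d-1}$ for every $D$, where $\phi_K\colon C\to\PP^{g-1}$ is the canonical map.

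\textbf{The crux: the factor-of-two improvement.} A single tangent space yields only $\dim L\ge g-d+2r$, i.e.\ $a\le d-2r$, with no gain from $A$ being abelian; the extra factor must come from letting $D$ vary. The plan is to show that the subspaces $\im\mu_{0,D}$ move inside $L$ with $a$ independent degrees of freedom, so that $\dim L\ge (g-d+2r)+a$, whence $g-a\ge g-d+2r+a$ and $a\le d/2-r$. Geometrically, in the case $r=0$ I would project $\phi_K(C)$ from the fixed plane $\Lambda$ to get $\psi\colon C\to Y\subseteq\PP^{g-1-a}$ onto a nondegenerate curve; each span $\langle\phi_K(\Supp D)\rangle$ collapses by $\dim\Lambda+1\ge a$, so $A$ produces an $a$-dimensional family of effective degree-$d$ divisors on $Y$ each imposing at least $a$ fewer conditions than expected on $|\mathcal O_Y(1)|$. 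Bounding the dimension of this ``maximally degenerate'' family of divisors, via the general (uniform) position lemma for nondegenerate curves, is what forces the family dimension $a$ down to $d/2-r$. (One can check directly that this is what fails when $a>d/2$: e.g.\ for $a=2,\,d=3$ the projection collapses each triple $\Supp D$ to a single point of $Y$, and a $2$-dimensional family of points of the curve $Y$ is impossible.)

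\textbf{Main obstacle and bookkeeping.} I expect this last dimension count---converting the translation-invariance of $A$ into a genuine \emph{doubling} of the Clifford/Martens deficiency---to be the real difficulty, and it is exactly the place where ``abelian variety'' is essential rather than ``arbitrary subvariety''. Several degenerate cases must be dispatched alongside it: the base locus of $|D|$ (handled by passing to the moving part, which decreases $d$ favorably), the possibility that $\ell(D)>r+1$ generically (absorbed into $r'\ge r$ above), and a hyperelliptic or otherwise special canonical curve $C$ (where $\phi_K$ is not an embedding and the projection argument needs the $g^1_2$ treated separately). Finally, as auxiliary bookkeeping one can move $r$ around using the subtraction maps $D\mapsto D-p$ (sending $W_d^r\to W_{d-1}^{r-1}$ with $\dim A$ unchanged) and the Serre-duality involution $D\mapsto K-D$, under which both the hypothesis $d\le g-1+r$ and the target $d/2-r$ are invariant; this lets one reduce much of the analysis to the geometrically cleanest case $r=0$, where $W_d^0=W_d$ and the projection-from-$\Lambda$ picture is exact.
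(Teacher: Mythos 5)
First, note that the paper does not prove this statement at all: it is imported verbatim from Debarre--Fahlaoui with a citation, so there is no internal proof to compare against. Judged on its own terms, your proposal has a genuine gap at exactly the step you flag as ``the crux''. Everything up to and including the Martens-type bound is fine: the reduction to $\ell(D)=r+1$ for general $D\in A$, the identification $T_DW_d^r=(\im\mu_{0,D})^\perp$ at such a point, the bound $\dim\im\mu_{0,D}\ge h^0(D)+h^0(K-D)-1=g-d+2r$, and the observation that the translation-invariance of $T_DA=W$ forces $\im\mu_{0,D}\subseteq W^\perp$ for all general $D$, giving $a\le d-2r$. But the passage from $\dim W^\perp\ge g-d+2r$ to $\dim W^\perp\ge (g-d+2r)+a$ --- equivalently, the claim that the subspaces $\im\mu_{0,D}$ sweep out $a$ additional independent directions inside $W^\perp$ as $D$ varies over $A$ --- is asserted as ``the plan'' and never established. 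A priori nothing prevents all the $\im\mu_{0,D}$ from coinciding, and this doubling \emph{is} the content of the proposition; without it you have only reproved Martens' inequality, which holds for arbitrary subvarieties of $W_d^r$ and makes no use of the group structure beyond parallelism of tangent spaces.

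The geometric route you propose for closing the gap also has a concrete obstruction you do not address. After projecting the canonical model from $\Lambda=\PP(W)$, the map $\psi\colon C\to Y\subseteq\PP^{g-1-a}$ need not be birational onto its image, and the uniform/general position lemma you want to invoke applies only to the birational case. The non-birational case is not a removable degeneracy: the extremal examples for this very inequality are the Debarre--Fahlaoui curves, which map with degree $>1$ onto curves sitting on symmetric powers of elliptic curves, i.e.\ precisely the configuration in which $\psi$ collapses. Any correct proof must handle that case head-on rather than treat it as ``bookkeeping''. (For what it is worth, the cited proof of Debarre--Fahlaoui does not run through tangent spaces to Brill--Noether loci at all; it exploits the group law globally, via sums and differences of the divisor classes parametrized by $A$ together with Clifford's theorem. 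So even granting the sketch, you would be proposing a genuinely different and currently incomplete argument.) The remaining reductions you list (base loci, the involution $D\mapsto K-D$, subtracting points) are correctly set up but do not bear on the missing step.
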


\begin{cor}
    Let $C/\Q$ be a curve of genus $g\geq6$ and $\Q$-gonality $\geq6$. If $C$ has infinitely many quintic points, then the Jacobian $J(C)$ contains a positive rank abelian variety $A$ of dimension $\leq2$.
\end{cor}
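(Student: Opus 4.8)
The plan is to combine the two preceding inputs, \Cref{translate_of_abelian_variety_thm} and \Cref{DF_AV_dim}, specialized to $d=5$ and $r=0$. First I would apply \Cref{translate_of_abelian_variety_thm} to the curve $C/\Q$ with $d=5$ and $K=\Q$. Since $C$ is assumed to have infinitely many quintic points, one of the two alternatives there must hold. The gonality hypothesis $\gon_\Q C\geq6$ immediately excludes alternative~(1): a degree $5$ morphism $C\to\PP^1$ defined over $\Q$ would force $\gon_\Q C\leq5$, a contradiction. Hence alternative~(2) holds, producing a degree $5$ point $x\in C$ with Galois conjugates $x_1,\dots,x_5$ together with a positive rank abelian subvariety $A\subset\Pic^0(C)=J(C)$ satisfying \[\phi_5(x_1+\dots+x_5)+A\subseteq W^0_5(C).\]

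Next I would pass to the base change $C_\C$ and feed this containment into \Cref{DF_AV_dim}. The left-hand side is the translate of the abelian subvariety $A$ by the class $\phi_5(x_1+\dots+x_5)\in\Pic^5(C)$; as $\Pic^5(C)$ is a torsor under $J(C)$, this translate is an abelian variety of the same dimension as $A$ contained in $W^0_5(C)$. Thus \Cref{DF_AV_dim} applies with $d=5$ and $r=0$, whose numerical hypothesis $d\leq g-1+r$ reads $5\leq g-1$, i.e.\ $g\geq6$, which is exactly our assumption. Its conclusion then yields \[\dim A\leq\frac{d}{2}-r=\frac{5}{2},\] and since $\dim A$ is an integer this forces $\dim A\leq2$. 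Combining with the positive rank (hence nonvanishing, so $\dim A\geq1$) of $A$ from alternative~(2) gives the desired positive rank abelian subvariety $A\subseteq J(C)$ of dimension at most $2$.

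The one point needing a little care, and the natural candidate for the main obstacle, is the transition in the second paragraph: \Cref{DF_AV_dim} is phrased for an abelian variety sitting inside $W^r_d(C)$, whereas what alternative~(2) supplies is a \emph{translate} of the abelian subvariety $A$ rather than an abelian subvariety through the origin. This is harmless, since the dimension bound is translation invariant and the Debarre--Fahlaoui estimate is genuinely carried out for translated abelian subvarieties of $\Pic^d(C)$; one may therefore identify the translate with $A$ for the purpose of reading off $\dim A$. Beyond this verification the argument is a direct numerical combination of the two inputs, with the hypothesis $g\geq6$ serving precisely to validate the condition $d\leq g-1+r$ of \Cref{DF_AV_dim}.
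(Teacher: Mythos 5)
Your argument is correct and is precisely the intended one: the paper states this corollary immediately after \Cref{translate_of_abelian_variety_thm} and \Cref{DF_AV_dim} without writing out a proof, and the combination you give (gonality $\geq 6$ kills alternative (1), then $g\geq 6$ validates $d\leq g-1+r$ with $d=5$, $r=0$, giving $\dim A\leq 5/2$) is exactly how it is meant to follow. Your remark about translates versus subvarieties through the origin is the right point to check and is handled correctly.
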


However, these results will not be enough for some curves $X_\Delta(N)$. One of the problems that arise is that it is difficult to determine whether there exists a degree $5$ rational morphism to $\PP^1$ if the $\Q$-gonality is $\leq4$. See the more detailed discussion on that for curves $X_0(N)$ in \cite[Section 7.1]{DensityDegree5}, similar issues appear for curves $X_\Delta(N)$.

Even if we restrict ourselves to those curves that have only finitely many degree $4$ points (like in \cite{DensityDegree5}), we face difficulties. 
Although we can determine all positive rank pentaelliptic curves using \Cref{quadraticform_coefficients}, \Cref{degree5morphismcor} can only be applied to curves of genus $g\geq12$ and there are several curves $X_\Delta(N)$ of genus $6\leq g\leq11$ with finitely many quartic points such that the Jacobian $J_0(N)$ is of positive rank.

We might attempt to use \cite[Proposition 6.3]{KadetsVogt} instead since the genus bound there is $g\geq9$ instead of $g\geq12$, but a big bottleneck there is the characterization of Debarre-Fahlaoui curves, defined in \cite[Section 5]{KadetsVogt}. They are not defined intrinsically and we do not know how to prove in the general case that a curve is not Debarre-Fahlaoui. The only result in that direction we know is that Debarre-Fahlaoui curves must admit a non-constant morphism to an elliptic curve \cite[Proposition 4.6]{DensityDegree5}, but this is not enough to solve all problematic curves $X_\Delta(N)$.

The results in this paper are based on \texttt{Magma} \cite{magma} and Sage computations. The codes that verify all computations in this paper can be found on

\begin{center}
    \url{https://github.com/nt-lib/quartic-Xdelta}
\end{center}

\section*{Acknowledgments}

The authors were supported by the Croatian Science Foundation under the project no. IP-2022-10-5008.

We are grateful to Filip Najman for his comments on the paper.

\section{Preliminaries}\label{preliminariessection}

The first step to prove \Cref{mainthm} is to limit the number of possible levels $N$ such that the curve $X_\Delta(N)$ has infinitely many quartic points. Since there is a projection map $X_\Delta(N)\to X_0(N)$ defined over $\Q$, we conclude that in that case the curve $X_0(N)$ must also have infinitely many quartic points.

\begin{thm}[{\cite{Hwang2023, DerickxOrlic23}}]\label{quarticthmx0n}
    The modular curve $X_0(N)$ has infinitely many points of degree $4$ over $\Q$ if and only if
    \begin{align*}
        N\in\{&1-75,77-83,85-89,91,92,94-96,98-101,103,104,107,111,\\
        &118,119,121,123,125,128,131,141-143,145,155,159,167,191\}.
    \end{align*}
\end{thm}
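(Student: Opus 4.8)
The plan is to combine an upper bound on $N$ coming from gonality with a dichotomy supplied by the structure theorem \Cref{translate_of_abelian_variety_thm}. First I would invoke Frey's theorem: if $X_0(N)$ has infinitely many quartic points, then $\gon_\Q X_0(N)\le 8$. Since $\gon_\Q\ge\gon_\C$ and the complex gonality of $X_0(N)$ is bounded below by a constant multiple of the index $[\PSL_2(\Z):\bar\Gamma_0(N)]$ (Abramovich's inequality, via the Li--Yau eigenvalue bound), the constraint $\gon_\Q X_0(N)\le 8$ forces this index, and hence $N$, to lie below an explicit bound. This reduces the whole problem to a finite and effectively computable list of candidate levels.

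For each surviving level I would treat the two implications separately through the alternatives of \Cref{translate_of_abelian_variety_thm}. To show that a listed $N$ genuinely has infinitely many quartic points it is enough to verify one of the two conditions. Condition~(1) holds precisely when $\gon_\Q X_0(N)\le 4$, and the tetragonal levels can be read off from the existing classification of $X_0(N)$ of $\Q$-gonality at most $4$. For condition~(2) I would exhibit a degree-$4$ morphism $f\colon X_0(N)\to E$ to a positive-rank elliptic quotient $E$ of $J_0(N)$: the pullback embeds $E$ (up to isogeny) as a positive-rank abelian subvariety $f^*(E)\subset\Pic^0(X_0(N))$, and since every fibre $f^*(P)$ is effective of degree $4$, the translate $\phi_4(f^*(O))+f^*(E)$ lies in $W_4^0$, which is exactly condition~(2). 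The elliptic factors of $J_0(N)$ together with their analytic ranks are available from a modular-symbols computation, so this direction amounts to a finite verification.

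The harder direction is proving \emph{finiteness} for the levels absent from the list, where both alternatives must be excluded. Alternative~(1) fails exactly when $\gon_\Q X_0(N)>4$, again by the gonality classification. To rule out alternative~(2) I would apply the Debarre--Fahlaoui bound \Cref{DF_AV_dim} with $d=4$ and $r=0$: whenever $g\ge 5$, any positive-rank abelian subvariety $A\subset\Pic^0(X_0(N))$ whose translate meets $W_4^0$ satisfies $\dim A\le 2$ (the finitely many small-genus levels, all of small $N$, are handled directly). Thus $A$ is an elliptic curve or an abelian surface, and in either case the configuration forces a nonconstant $\Q$-morphism from $X_0(N)$ onto a curve of genus $1$ or $2$ whose Jacobian contains $A$. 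One then decomposes $J_0(N)$ into isogeny factors and checks that no elliptic or two-dimensional factor of positive rank supports such a map of the required degree.

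I expect the main obstacle to be precisely this last step. Ruling out the positive-rank elliptic quotients cleanly means certifying the \emph{nonexistence} of a degree-$4$ map $X_0(N)\to E$, for which Castelnuovo--Severi type inequalities relating $g$ to the degrees of competing pencils are the natural tool, while the abelian-surface (genus-$2$ cover) case is the most delicate, since such covers are not detected by gonality alone. A secondary difficulty is deciding $\Q$-gonality $4$ versus higher for the boundary levels whose complex gonality is small; resolving these borderline cases, rather than the gonality bound or the dimension estimate, is where the real work lies.
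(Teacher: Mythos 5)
First, a point of orientation: the paper does not prove \Cref{quarticthmx0n} at all --- it is quoted from \cite{Hwang2023, DerickxOrlic23} --- so the comparison below is with the method of those papers, which Section 4 of the present paper generalizes to $X_\Delta(N)$. Your reduction to finitely many levels via Frey's theorem plus Abramovich's gonality lower bound, and your positive direction (reading off the tetragonal levels from the gonality classification and exhibiting degree-$4$ maps to positive-rank elliptic quotients of $J_0(N)$), are sound and essentially what is done there.

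The genuine gap is in the finiteness direction. From \Cref{translate_of_abelian_variety_thm}(2) and \Cref{DF_AV_dim} you obtain a positive-rank abelian subvariety $A\subseteq \Pic^0(X_0(N))$ of dimension $\le 2$ whose translate lies in $W_4^0$, but it does \emph{not} follow that $X_0(N)$ admits a nonconstant morphism onto a curve of genus $1$ or $2$ whose Jacobian contains $A$; that inference is precisely the hard content of the Kadets--Vogt structure theorem (\Cref{kadetsvogt1.4}), whose proof passes through the classification of Debarre--Fahlaoui curves. The cited proof uses \Cref{degree4morphismcor} instead: for $g\ge 8$ and minimum density degree $4$ one gets \emph{directly} a degree-$4$ map to $\PP^1$ or to a positive-rank elliptic curve, so no abelian-surface or genus-$2$ target ever has to be excluded. (The paper itself flags, in its discussion of $d=5$, that Debarre--Fahlaoui curves admit no intrinsic characterization --- this is exactly the step your sketch treats as routine.) A second, independent problem is your reliance on Castelnuovo--Severi to certify the \emph{nonexistence} of a degree-$4$ map $X_0(N)\to E$: when $E$ occurs with multiplicity $n>1$ in $J_0(N)$ (as happens for the composite levels in play), the maps to $E$ form a rank-$n$ lattice and Castelnuovo--Severi says nothing useful. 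The actual tool is the positive definite quadratic form $\deg$ on $\Hom_\Q(J_0(N),E)\cong\Z^n$ (\Cref{prop:derickx-orlic-quadratic-form} together with the explicit coefficient formula of \Cref{pairingcomputation}), whose represented values one computes and checks do not include $4$.
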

Moreover, from the group structure of the group $(\Z/N\Z)^\times$, we can easily see that for
\[N\in\{1-12,14,18,22,23,46,47,59,83,94,107,167\}\]
there are actually no intermediate modular curves $X_\Delta(N)$ because in these cases $(\Z/N\Z)^\times\cong \Z/2p\Z$ for some prime $p$. This leaves us with reasonably many curves $X_\Delta(N)$ we need to study.

\begin{definition}\cite[Page 1]{KadetsVogt}\label{densitydegreedef}
    Let $C$ be a curve defined over a number field $K$. The {\it density degree set} $\delta(C/K)$ is the set of integers $d$ for which the collection of closed points of degree $d$ on $C$ is infinite. The {\it minimum density degree}\footnote{In previous versions of \cite{KadetsVogt}, the minimum density degree was called \textit{the arithmetic degree of irrationality} $\textup{a.irr}_K C$.} $\min(\delta(C/K))$ is the smallest integer $d\geq1$ such that $C$ has infinitely many closed points of degree $d$ over $K$, or equivalently:
    \[\min(\delta(C/K)) =\min\left\{d 
\in \N \mid \#\left\{\cup_{[F:K]=d} C(F)\right\}=\infty\right\}.\]
\end{definition}

In \Cref{infinitelyquarticsection} we will see that all curves $X_\Delta(N)$ with infinitely many points of degree $\leq3$ also have infinitely many quartic points. This means that for curves not listed in \Cref{mainthm} it will be enough to prove $\min(\delta(C/K))\neq4$. The following result is very useful for that. It tells us that for curves of large enough genus, infinitely many degree $d$ points must be preimages of smaller degree points.

\begin{thm}[\cite{KadetsVogt}, Theorem 1.4]\label{kadetsvogt1.4}
    Suppose $C/K$ is a curve of genus $g$ such that $\min(\delta(C/K))=d$. Let $m:=\lceil d/2\rceil -1$ and let $\epsilon:=3d-1-6m<6$. Then one of the following holds:
    \begin{enumerate}[(1)]
        \item There exists a nonconstant morphism of curves $\phi:C\to Y$ of degree at least $2$ such that $d=\min(\delta(Y/K))\cdot\textup{deg}\phi$.
        \item $g\leq\textup{max} \left(\frac{d(d-1)}{2}+1,3m(m-1)+m\epsilon\right)$.
    \end{enumerate}
\end{thm}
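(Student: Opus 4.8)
The plan is to combine the geometric characterization of curves with infinitely many degree $d$ points (\Cref{translate_of_abelian_variety_thm}) with the Debarre--Fahlaoui dimension bound (\Cref{DF_AV_dim}). Write $d:=\min(\delta(C/K))$, so that $C$ has infinitely many closed points of degree $d$. Applying \Cref{translate_of_abelian_variety_thm} with this value of $d$, at least one of the following holds: either (a) there is a morphism $C\to\PP^1$ of degree $d$, or (b) there is a degree $d$ point $x$ with Galois conjugates $x_1,\dots,x_d$ and a positive rank abelian subvariety $A\subseteq\Pic^0(C)$ with $\phi_d(x_1+\dots+x_d)+A\subseteq W^0_d(C)$. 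In case (a) I would take $Y=\PP^1$ and $\phi$ the degree $d$ map: since $\min(\delta(\PP^1/K))=1$ and $\deg\phi=d$, conclusion (1) holds whenever $d\geq2$, while $d=1$ forces $g\leq1$ by Faltings and makes conclusion (2) automatic. The real work is case (b).

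In case (b) the central step is to sort the abelian subvariety $A$ according to whether it is \emph{induced from a quotient curve}. Concretely, I would ask whether the inclusion $A\hookrightarrow\Pic^0(C)$ factors, up to isogeny, through the pullback $\phi^*\Pic^0(Y)$ of some nonconstant morphism $\phi:C\to Y$ of degree $e\geq2$. If it does, then the infinitely many degree $d$ points of $C$ are pullbacks along $\phi$ of degree $d/e$ points of $Y$, and minimality of $d$ forces $d/e=\min(\delta(Y/K))$; this is exactly conclusion (1), with $\deg\phi=e$. If no such covering exists, I would bound the genus. Setting $r=0$ in \Cref{DF_AV_dim}, the hypothesis $d\leq g-1$ yields $\dim A\leq d/2$, and feeding this dimension into the geometry of the translate inside $W^0_d(C)$ --- estimating how large $g$ can be before a non-induced abelian subvariety of that dimension is \emph{forced} to descend from a covering --- produces the bound $g\leq 3m(m-1)+m\epsilon$ with $m=\lceil d/2\rceil-1$ and $\epsilon=3d-1-6m$. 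The complementary regime $d>g-1$, where \Cref{DF_AV_dim} does not apply, is essentially trivial: there $g\leq d\leq\tfrac{d(d-1)}{2}+1$, so taking the larger of the two bounds covers every case.

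The main obstacle is the non-induced branch of case (b): making rigorous the claim that a positive-dimensional abelian subvariety lying in a translate of $W^0_d(C)$ either descends from a covering $C\to Y$ or else forces the explicit genus bound. This is precisely the content of the Debarre--Fahlaoui theory of curves whose $W^0_d$ contains abelian varieties, and extracting the sharp constants $3m(m-1)+m\epsilon$ requires the full secant-variety and Brill--Noether dimension count rather than the single inequality recorded in \Cref{DF_AV_dim}. One must also check that the dichotomy is genuinely exhaustive --- in particular that any covering produced in the analysis has degree $\geq2$ and that $d$ factors exactly as $\min(\delta(Y/K))\cdot\deg\phi$ with no slack --- so that conclusions (1) and (2) between them account for all possibilities.
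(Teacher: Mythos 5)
First, a point of comparison: the paper does not prove this statement at all --- it is quoted verbatim from Kadets--Vogt as an external result (Theorem 1.4 of \cite{KadetsVogt}), so there is no internal proof to measure your attempt against. Your outline does reproduce the correct architecture of the original argument (the dichotomy of \Cref{translate_of_abelian_variety_thm} followed by an analysis of abelian subvarieties of $W^0_d(C)$ in the spirit of Debarre--Fahlaoui), but as a proof it has two genuine gaps.

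The first gap is in the ``induced'' branch of case (b). Even granting that $A$ factors up to isogeny through $\phi^*\Pic^0(Y)$ for some degree-$e$ cover $\phi:C\to Y$, it does not follow formally that the infinitely many degree-$d$ points of $C$ are pullbacks of degree-$(d/e)$ points of $Y$: $e$ need not divide $d$ a priori, a degree-$d$ closed point maps to a closed point of $Y$ of degree merely \emph{dividing} $d$, and the preimage of a degree-$m$ point of $Y$ is a sum of closed points whose degrees only \emph{sum} to $em$ and may individually be smaller. Your minimality argument therefore yields inequalities such as $\min(\delta(Y/K))\le d$ and $d\le e\cdot\min(\delta(Y/K))$ at best, not the exact multiplicative identity $d=\min(\delta(Y/K))\cdot\deg\phi$ that conclusion (1) demands; establishing that identity is a real part of the Kadets--Vogt proof. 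The second gap is the one you yourself flag: in the ``non-induced'' branch, the claim that the absence of such a covering forces $g\le 3m(m-1)+m\epsilon$ is the entire technical core of the theorem. \Cref{DF_AV_dim} gives only $\dim A\le d/2$, and no amount of feeding that single inequality back into the geometry of $W^0_d(C)$ produces the stated constants; one needs the full classification of Debarre--Fahlaoui curves and the associated Brill--Noether/secant-variety dimension counts. As written, your proposal is a correct roadmap with the two hardest steps asserted rather than proved, so it does not constitute a proof of the statement.
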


\begin{cor}[{\cite[Corollary 5.2]{DerickxOrlic23}}]\label{degree4morphismcor}
    Suppose $C/\Q$ is a curve of genus $g\geq8$ such that $\min(\delta(C/\Q))=4$. Then there exists a nonconstant morphism of degree $4$ from $C$ to $\mathbb{P}^1$ or an elliptic curve defined over $\Q$ with positive $\Q$-rank.
\end{cor}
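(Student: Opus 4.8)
The plan is to invoke \Cref{kadetsvogt1.4} with $d=4$, use the genus hypothesis to rule out its genus-bound alternative, and then analyze the fibration it produces. First I would record the constants: with $d=4$ we get $m=\lceil 4/2\rceil-1=1$ and $\epsilon=3\cdot 4-1-6=5<6$, so the bound in alternative (2) is $\max\!\left(\tfrac{4\cdot 3}{2}+1,\,3\cdot 1\cdot 0+1\cdot 5\right)=\max(7,5)=7$. Since $g\geq 8>7$, alternative (2) cannot hold, so alternative (1) applies: there is a nonconstant $\phi\colon C\to Y$ with $\deg\phi\geq 2$ and $4=\min(\delta(Y/\Q))\cdot\deg\phi$. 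Because $\deg\phi\geq 2$ must divide $4$, we have $\deg\phi\in\{2,4\}$.

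If $\deg\phi=4$, then $\min(\delta(Y/\Q))=1$, so $Y$ has infinitely many rational points; by Faltings' Theorem $Y\cong\mathbb{P}^1$ or $Y$ is an elliptic curve of positive $\Q$-rank, and $\phi$ is the desired degree $4$ morphism. So the whole difficulty is concentrated in the case $\deg\phi=2$, where $\min(\delta(Y/\Q))=2$; here the strategy is to build a degree $2$ morphism $\psi\colon Y\to Z$ over $\Q$ with $Z\cong\mathbb{P}^1$ or $Z$ a positive-rank elliptic curve, so that $\psi\circ\phi$ has degree exactly $4$ and lands in an allowed target.

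To construct $\psi$ I would apply \Cref{kadetsvogt1.4} a second time, now to $Y$ with $d=2$: here $m=0$, $\epsilon=5$, and the genus bound collapses to $\max(2,0)=2$. Hence either its alternative (1) hands us a degree $2$ map $\psi\colon Y\to Z$ with $\min(\delta(Z/\Q))=1$ — in which case Faltings again forces $Z\cong\mathbb{P}^1$ or a positive-rank elliptic curve and we are finished — or else $g_Y\leq 2$. This recursion is the clean part of the argument.

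The step I expect to be the main obstacle is the leftover low-genus range $g_Y\leq 2$ with $\min(\delta(Y/\Q))=2$, precisely because of pointless genus $0$ and genus $1$ curves, where one must check that the natural degree $2$ maps descend to $\Q$. Since $\min(\delta(Y/\Q))=2$ the curve $Y$ has (infinitely many) quadratic points, hence a $\Q$-rational effective divisor of degree $2$. If $g_Y=0$ then $Y$ is a conic without rational point, which has $\Q$-gonality $2$; if $g_Y=1$, Riemann--Roch shows any $\Q$-rational degree $2$ divisor moves in a base-point-free pencil defined over $\Q$, giving a degree $2$ map to $\mathbb{P}^1$; and if $g_Y=2$ the canonical class is a $\Q$-rational $g^1_2$. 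In every case $Y$ admits a degree $2$ morphism to $\mathbb{P}^1$ over $\Q$, and composing with $\phi$ produces the required degree $4$ morphism $C\to\mathbb{P}^1$. The only genuine subtlety is verifying the $\Q$-rationality of these linear systems so that the maps are defined over $\Q$ rather than merely geometrically; this follows from the $\Q$-rationality of the divisors and of the canonical class used.
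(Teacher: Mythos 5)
Your argument is correct, and its skeleton is the one the paper relies on (the paper itself gives no proof here, deferring entirely to \cite[Corollary 5.2]{DerickxOrlic23}): apply \Cref{kadetsvogt1.4} with $d=4$, note $g\geq 8>7$ kills alternative (2), and split according to $(\deg\phi,\min(\delta(Y/\Q)))\in\{(4,1),(2,2)\}$, with Faltings handling the first case. Where you genuinely diverge is the $(2,2)$ subcase. The cited proof disposes of it by invoking the Harris--Silverman/Abramovich--Harris characterization of curves with infinitely many quadratic points: such a $Y$ admits a degree $2$ map over $\Q$ to $\PP^1$ or to a positive-rank elliptic curve, and one composes. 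You instead re-apply \Cref{kadetsvogt1.4} to $Y$ with $d=2$ (correctly computing that its genus bound collapses to $2$) and then treat the leftover curves of genus $\leq 2$ by hand: projection from an external rational point for a pointless conic, the complete linear system of the rational degree-$2$ divisor coming from a quadratic point when $g_Y=1$, and the canonical pencil when $g_Y=2$, each of which is a $\Q$-rational base-point-free $g^1_2$ by Riemann--Roch over $\Q$. Your route is more self-contained (it needs only Kadets--Vogt, Faltings, and Riemann--Roch, not the quadratic-point classification), at the cost of the explicit low-genus casework; the citation-based route is shorter but leans on a deeper black box. Both are complete, and your attention to $\Q$-rationality of the linear systems is exactly the point that needs checking.
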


\begin{cor}\label{degree5morphismcor}
    Suppose $C/\Q$ is a curve of genus $g\geq12$ such that $\min(\delta(C/\Q))=5$. Then there exists a nonconstant morphism of degree $5$ from $C$ to $\mathbb{P}^1$ or an elliptic curve defined over $\Q$ with positive $\Q$-rank.
\end{cor}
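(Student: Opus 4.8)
The plan is to specialize \Cref{kadetsvogt1.4} to the value $d=5$ and rule out its genus alternative, exactly mirroring the proof of \Cref{degree4morphismcor}. First I would compute the auxiliary quantities attached to $d=5$: we have $m=\lceil 5/2\rceil-1=2$ and $\epsilon=3\cdot5-1-6\cdot2=2$, which indeed satisfies $\epsilon<6$, so the hypotheses of \Cref{kadetsvogt1.4} are met. Substituting into alternative (2) gives the genus bound
\[
g\leq\max\left(\frac{d(d-1)}{2}+1,\ 3m(m-1)+m\epsilon\right)=\max(11,10)=11.
\]
Since we assume $g\geq12$, alternative (2) is impossible, and therefore alternative (1) of \Cref{kadetsvogt1.4} must hold.

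Next I would extract and factor the resulting morphism. Alternative (1) furnishes a nonconstant morphism $\phi:C\to Y$ of smooth projective curves with $\deg\phi\geq2$ and $5=\min(\delta(Y/\Q))\cdot\deg\phi$. Because $5$ is prime and $\deg\phi\geq2$, the only admissible factorization into positive integers is $\deg\phi=5$ together with $\min(\delta(Y/\Q))=1$. This primality step is precisely what makes the argument clean, in contrast with the $d=4$ situation of \Cref{degree4morphismcor}, where $\deg\phi$ could a priori equal $2$ and force additional analysis of the target.

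Finally I would identify $Y$. The equality $\min(\delta(Y/\Q))=1$ means $Y$ has infinitely many $\Q$-rational points. By Faltings' Theorem this forces $g_Y\leq1$, and the existence of infinitely many rational points in particular guarantees at least one rational point; hence either $g_Y=0$, in which case $Y\cong\PP^1$, or $g_Y=1$, in which case $Y$ is an elliptic curve whose Mordell--Weil group $Y(\Q)$ is infinite, i.e. of positive $\Q$-rank. Composing $\phi$ with the identity on $Y$ then yields the desired nonconstant degree $5$ morphism from $C$ to $\PP^1$ or to a positive rank elliptic curve over $\Q$.

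I do not expect a serious obstacle here, as the statement is essentially a numerical specialization of \Cref{kadetsvogt1.4}. The only point that genuinely requires care is the arithmetic in the genus bound: the threshold $g\geq12$ is sharp for this method, since at $g=11$ alternative (2) can still occur and the conclusion cannot be drawn this way. This is exactly why the corollary (and hence any application to curves $X_\Delta(N)$) is restricted to genus at least $12$, leaving the genera $6\leq g\leq11$ to be handled by other means.
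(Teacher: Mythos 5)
Your proposal is correct and is exactly the intended argument: the paper states this corollary without proof as a direct numerical specialization of \Cref{kadetsvogt1.4} (parallel to \Cref{degree4morphismcor}), and your computation $m=2$, $\epsilon=2$, $\max(11,10)=11$ together with the primality of $5$ fills in that implicit reasoning correctly.
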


\section{Curves with infinitely many quartic points}\label{infinitelyquarticsection}

In this section, we will prove that all curves $X_\Delta(N)$ listed in \Cref{mainthm} have infinitely many quartic points. We will do this by finding a degree $4$ rational morphism from $X_\Delta(N)$ to $\PP^1$ or a positive rank elliptic curve. %(Faltings' theorem tells us that these are the only curves over $\Q$ with infinitely many rational points).
When $X_\Delta(N)$ has a map of degree $4$ to $\PP^1$, Hilbert's Irreducibility Theorem \cite[Proposition 3.3.5(2) and Theorem 3.4.1]{serre2016topics} implies the existence of infinitely many quartic points on $X_\Delta(N)$. %\maarten{Hilbert's Irreducibility Theorem does not work for deg 4 maps to positive rank ec's}
In the cases where we found a degree $4$ map to a positive rank elliptic curve on $X_\Delta(N)$ there are no points of degree $<4$. This means that in this case the inverse images of rational points on the elliptic curve also have to contain infinitely many degree $4$ points.

\Cref{quadraticthm} gives us all curves $X_\Delta(N)$ with infinitely many quadratic points. Such curves automatically have infinitely many quartic points. The curves $X_\Delta(N)$ of genus $g\leq1$ are those for $N\in\{13,15,16,17,19,20\}$ and
\begin{align*}
   (N,\Delta)\in\{&(21,\{\pm1,\pm4,\pm5\}),(24,\{\pm1\pm11\}),(\{25,\{\pm1,\pm4,\pm6,\pm9,\pm11\},\\
   &(27,\{\pm1,\pm8,\pm10\})\}),(32,\{\pm1,\pm7,\pm9,\pm15\})\}.
\end{align*}

We now consider the curves $X_\Delta(N)$ of genus $g\geq2$. The only such hyperelliptic curve is $X_{\{\pm1,\pm8\}}(21)$. All other curves have only finitely many quadratic points.

\begin{prop}\label{gon4prop}
    The modular curve $X_\Delta(N)$ has infinitely many quartic points for $N$ and $\Delta$ in the following table:

\begin{center}
\begin{longtable}{|c|c|c|c|c||c|c|c|c|c|}

\hline
\addtocounter{table}{-1}
    $N$ & $\Delta$ & $N$ & $\Delta$\\
    \hline
    $25$ & $\{\pm1,\pm7\}$ &
    $30$ & $\{\pm1,\pm11\}$\\
    $32$ & $\{\pm1,\pm15\}$ &
    $33$ & $\{\pm1,\pm2,\pm4,\pm8,\pm16\}$\\
    $34$ & $\{\pm1,\pm9,\pm13,\pm15\}$ &
    $35$ & $\{\pm1,\pm6,\pm8,\pm13\}$\\
    $35$ & $\left<-1,4\right>$ &
    $36$ & $\{\pm1,\pm17\}$\\
    $39$ & $\{\pm1,\pm5,\pm8,\pm14\}$ &
    $39$ & $\left<-1,4\right>$\\
    $40$ & $\{\pm1,\pm3,\pm9,\pm13\}$ &
    $40$ & $\{\pm1,\pm7,\pm9,\pm17\}$\\
    $40$ & $\{\pm1,\pm9,\pm11,\pm19\}$ &
    $41$ & $\left<-1,2\right>$\\
    $45$ & $\left<-1,4\right>$ &
    $48$ & $\{\pm1,\pm5,\pm19,\pm23\}$\\
    $48$ & $\{\pm1,\pm7,\pm17,\pm23\}$ &
    $48$ & $\{\pm1,\pm11,\pm13,\pm23\}$\\
    $55$ & $\left<-1,4\right>$ &
    $64$ & $\left<-1,9\right>$\\
    $75$ & $\left<-1,4\right>$ & &\\
     
    \hline

\caption*{}
\end{longtable}
\end{center}
\end{prop}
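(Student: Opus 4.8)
The plan is to produce, for every pair $(N,\Delta)$ in the table, an explicit degree $4$ morphism $X_\Delta(N)\to\PP^1$ defined over $\Q$, and then apply the mechanism recorded at the start of this section. Once such a map is available, Hilbert's Irreducibility Theorem \cite[Proposition 3.3.5(2) and Theorem 3.4.1]{serre2016topics} supplies infinitely many $t\in\PP^1(\Q)$ whose fiber is irreducible over $\Q$, i.e.\ (for all but finitely many such $t$) a single reduced closed point whose residue field has degree $4$ over $\Q$. Distinct values of $t$ give disjoint fibers and hence distinct quartic points, so we obtain infinitely many quartic points on $X_\Delta(N)$. Since every $X_\Delta(N)$ with $-1\in\Delta$ is geometrically irreducible, the hypotheses of Hilbert's theorem are met with no further work.

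The essential input is therefore that each listed curve has $\Q$-gonality exactly $4$. First I would cross-reference the table with the classification of intermediate modular curves of $\Q$-gonality $4$ and $5$ obtained in \cite{Orlic2024Intermediate}: every pair appearing here occurs there as a curve of $\Q$-gonality $4$, which by definition furnishes the required degree $4$ map to $\PP^1$ over $\Q$. To make the construction self-contained I would, for each pair, compute an explicit model of $X_\Delta(N)$ from the space of weight-$2$ cusp forms for $\Gamma_\Delta(N)$ — a canonical model, since each of these curves is non-hyperelliptic (a genus-$2$ curve would be hyperelliptic, and by \Cref{quadraticthm} the only such intermediate curve of genus $>1$ is $X_{\{\pm1,\pm8\}}(21)$, which is not in the table) — and then exhibit a $g^1_4$, for instance a pencil cut out by quadrics through the canonical image, or a degree $4$ function pulled back along a projection $X_\Delta(N)\to X_{\Delta'}(N)$ or $X_\Delta(N)\to X_0(N)$ composed with a low-degree map on the target.

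Before invoking the gonal pencil I would confirm that the $\Q$-gonality is not already smaller than $4$, so that these curves are genuinely new relative to the quadratic and cubic cases: none of them is $X_{\{\pm1,\pm8\}}(21)$, so the $\Q$-gonality is not $2$ by \Cref{quadraticthm}, and none appears in the list of \Cref{cubicthm}, so each has only finitely many cubic points and hence admits no degree $3$ map to $\PP^1$ over $\Q$ (such a map would yield infinitely many cubic points by Hilbert's theorem), forcing $\Q$-gonality $\geq 4$; together with the degree $4$ map this gives gonality exactly $4$. The conceptual content is thus entirely absorbed into Hilbert's Irreducibility Theorem, and the main obstacle is computational: for the higher-genus members of the list (genus up to $9$) one must reliably compute the model and certify that the exhibited pencil has degree exactly $4$ and is defined over $\Q$, which is where the verification leans on the \texttt{Magma} computations and on the gonality classification of \cite{Orlic2024Intermediate}. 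Once a degree $4$ map to $\PP^1$ has been secured for every listed pair, Hilbert's theorem completes the proof uniformly.
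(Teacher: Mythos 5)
Your proposal is correct and matches the paper's argument: the paper's proof of this proposition is precisely the one-line citation of \cite[Theorem 1.3]{Orlic2024Intermediate} for the $\Q$-gonality being $4$, combined with the Hilbert Irreducibility argument stated at the start of \Cref{infinitelyquarticsection}. Your additional verifications (explicit canonical models, ruling out gonality $\leq 3$ via \Cref{quadraticthm} and \Cref{cubicthm}) are harmless but unnecessary, since a degree $4$ map to $\PP^1$ yields infinitely many quartic points via Hilbert irreducibility regardless of whether the gonality is exactly $4$.
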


\begin{proof}
    These curves have $\Q$-gonality equal to $4$ by \cite[Theorem 1.3]{Orlic2024Intermediate}.
\end{proof}

\begin{prop}\label{deg4mapprop}
    The modular curve $X_\Delta(N)$ has infinitely many quartic points for $N$ and $\Delta$ in the following table:
\end{prop}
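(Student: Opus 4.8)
The plan is to exhibit, for each pair $(N,\Delta)$ in the table, a nonconstant morphism $f\colon X_\Delta(N)\to E$ of degree $4$ defined over $\Q$, where $E$ is an elliptic curve of positive $\Q$-rank, and then to deduce the existence of infinitely many quartic points from the infinitude of $E(\Q)$.

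First I would locate the target elliptic curve $E$ as an isogeny factor of the Jacobian $J_\Delta(N)$: decomposing $J_\Delta(N)$ into simple factors and selecting an elliptic factor $E$ with $\rk E(\Q)\geq 1$ (read off from LMFDB or computed directly). Modularity yields a parametrization $X_\Delta(N)\to E$ via the Abel--Jacobi map followed by projection to $E$, and I would compute its degree explicitly, either on an explicit model of $X_\Delta(N)$ or through the degree formula for $\Delta$-parametrizations developed in \Cref{deltastrongweildegree}, confirming in each case that the minimal such degree equals $4$.

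The deduction of infinitely many quartic points then follows formally. Since $E(\Q)$ is infinite, the fibers $f^{-1}(P)$ for $P\in E(\Q)$ form an infinite family of pairwise disjoint effective divisors of degree $4$ on $X_\Delta(N)$. Each curve in the table has genus $g\geq 2$, is not the hyperelliptic curve $X_{\{\pm1,\pm8\}}(21)$, and does not appear in the list of \Cref{cubicthm}; hence by Faltings' Theorem together with \Cref{quadraticthm} and \Cref{cubicthm} it carries only finitely many points of degree $\leq 3$. Because $f$ sends each closed point to a single point of $E$, these finitely many low-degree points are distributed among finitely many fibers, so all but finitely many of the divisors $f^{-1}(P)$ are supported entirely on points of degree $\geq 4$. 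A degree-$4$ effective divisor supported on points of degree $\geq 4$ is necessarily a single quartic point, and distinct rational points $P$ give distinct such points; this produces infinitely many quartic points on $X_\Delta(N)$.

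The main obstacle is the first step: reliably constructing the degree-$4$ maps and certifying that their degree is exactly $4$ for every curve in the table. This requires producing workable models of the intermediate curves $X_\Delta(N)$ (which are less standard than those for $X_0(N)$ or $X_1(N)$), carrying out the Jacobian decomposition to isolate a positive-rank elliptic factor, and computing parametrization degrees --- precisely the content that \Cref{deltastrongweildegree} is designed to supply. By contrast, once the map is in hand, the passage to infinitely many quartic points is immediate.
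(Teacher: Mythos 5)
There is a genuine gap: your strategy cannot even get started for most of the entries in the table. Any elliptic curve $E$ admitting a nonconstant map from $X_\Delta(N)$ is an isogeny factor of $J_\Delta(N)$, hence (having trivial nebentypus) an isogeny factor of $J_0(N)$ with conductor dividing $N$. For $N\in\{24,26,28,29,36,50,55\}$ every such factor has rank $0$ over $\Q$, so there is no positive-rank elliptic curve to map to at all --- indeed \Cref{deltatetraelliptic_rank0} explicitly proves that $X_\Delta(N)$ is \emph{not} positive rank tetraelliptic for $N=29,50,55$. Your plan of ``selecting an elliptic factor $E$ with $\rk E(\Q)\geq 1$'' therefore fails for more than half the table, and no amount of computation of parametrization degrees will repair it. The missing idea is the $\PP^1$ route: for these $N$ one needs a degree~$4$ map to $\PP^1$ together with Hilbert's Irreducibility Theorem, not a map to an elliptic curve.

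This is in fact what the paper does. The common feature of all entries is that $\Delta$ has index $2$ in $(\Z/N\Z)^\times$, so the projection $u_\Delta: X_\Delta(N)\to X_0(N)$ has degree $2$. For $N\leq 50$ in the table, $X_0(N)$ has $\Q$-gonality $2$, and composing gives a degree~$4$ map $X_\Delta(N)\to\PP^1$; Hilbert irreducibility then yields infinitely many quartic points. For $N>50$, $X_0(N)$ has infinitely many quadratic points by Bars, and their pullbacks under $u_\Delta$ give infinitely many points of degree $\leq 4$, which must be quartic since $X_\Delta(N)$ has only finitely many points of degree $\leq 3$ by \Cref{quadraticthm} and \Cref{cubicthm}. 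Your closing deduction (finitely many low-degree points force all but finitely many degree-$4$ fibers to be single quartic points) is sound and matches the paper's, but it only applies once a suitable degree-$4$ map exists; for the levels $N\in\{37,53,61,65,89,101\}$ where a rank-$1$ elliptic factor of modular degree $2$ does exist, your construction would essentially reproduce the paper's composite $X_\Delta(N)\to X_0(N)\to E$ of degree $4$, so even there you gain nothing over the simpler argument.
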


\begin{center}
\begin{longtable}{|c|c|c|}

\hline
\addtocounter{table}{-1}
    $N$ & $\Delta$ & $Y$\\
    \hline
    $24$ & $\{\pm1,\pm5\}$ & $X_0(24)$\\
    $24$ & $\{\pm1,\pm7\}$ & $X_0(24)$\\
    $26$ & $\{\pm1,\pm3,\pm9\}$ & $X_0(26)$\\
    $28$ & $\{\pm1,\pm3,\pm9\}$ & $X_0(28)$\\
    $29$ & $\{\pm1,\pm4,\pm5,\pm6,\pm7,\pm9,\pm13\}$ & $X_0(29)$\\
    $36$ & $\{\pm1,\pm11,\pm13\}$ & $X_0(36)$\\
    $37$ & $\{\pm1,\pm3,\pm4,\pm7,\pm9,\pm10,\pm11,\pm12,\pm16\}$ & $X_0(37)$\\
    $50$ & $\{\pm1,\pm9,\pm11,\pm19,\pm21\}$ & $X_0(50)$\\
    $53$ & $\left<4\right>$ & $X_0(53)$\\
    $55$ & $\left<-1,4\right>$ & $X_0(55)$\\
    $61$ & $\left<4\right>$ & $X_0(61)$\\
    $65$ & $\left<2,12^2\right>$ & $X_0(65)$\\
    $65$ & $\left<4,12\right>$ & $X_0(65)$\\
    $65$ & $\left<4,24\right>$ & $X_0(65)$\\
    $89$ & $\left<9\right>$ & $X_0(89)$\\
    $101$ & $\left<4\right>$ & $X_0(101)$\\
     
    \hline

\caption*{}
\end{longtable}
\end{center}

\begin{proof}
    In all these cases we have $[\Gamma_0(N):\Delta]=2$, meaning that the projection map $X_\Delta(N)\to X_0(N)$ is of degree $2$. The curves $X_0(N)$ with $N \leq 50$ in this list have $\Q$-gonality equal to $2$, giving rise to a degree $4$ map to $\PP^1$ so Hilbert's irreducibility Theorem ensures the existence of infinitely many quartic points. In the cases where $N > 50$ the gonality of $X_0(N)$ is $>2$ but these curves still have infinitely many quadratic points by \cite[Theorem 4.3]{Bars99}. %\maarten{for the $N \leq 50$ in the list I am happy since those have $\Q$ gonality $2$ and hence we get a degree $4$ map to $\PP^1$ and we can apply HIT, but for the $N>50$ in the list I do not see how to apply HIT}
    The pullbacks of these quadratic points are a source of infinitely many quartic points on $X_\Delta(N)$ as it has only finitely many points of degree $\leq3$ by \Cref{quadraticthm} and \Cref{cubicthm}.
\end{proof}

\begin{prop}\label{deg4canonicaldivisor}
    The modular curve $X_\Delta(N)$ has infinitely many quartic points for $(N,\Delta)=(49,\left<27\right>)$.
\end{prop}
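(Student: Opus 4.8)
The plan is to realize $C:=X_\Delta(49)$ with $\Delta=\langle 27\rangle$ as a smooth plane quartic over $\Q$, to produce from it a degree-$4$ map to $\PP^1$ defined over $\Q$ by projecting from a rational point of the ambient plane, and then to invoke Hilbert's irreducibility theorem to obtain infinitely many quartic points. First I would record the two structural facts about $C$ that make this work. The group $\Delta=\langle 27\rangle=\{\pm1,\pm6,\pm8,\pm13,\pm15,\pm20,\pm22\}$ has index $3$ in $(\Z/49\Z)^\times\cong C_{42}$, and $C$ has genus $3$; this is exactly the curve already appearing in \Cref{cubicthm}, and the genus can be computed directly from the dimension of the space of weight-$2$ cusp forms for $\Gamma_\Delta(49)$. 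Moreover, by \Cref{quadraticthm} the only hyperelliptic curve $X_\Delta(N)$ of genus $>1$ is $X_{\{\pm1,\pm8\}}(21)$, so $C$ is non-hyperelliptic.

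Since the space of regular differentials carries a natural $\Q$-structure, the canonical map of the non-hyperelliptic genus-$3$ curve $C$ is a closed immersion $C\hookrightarrow\PP^2_\Q$ onto a smooth plane quartic (the canonical divisor $K$ has degree $2g-2=4$ and $\ell(K)=g=3$). I would then choose a point $Q\in\PP^2(\Q)$ with $Q\notin C$, which is possible because $C(\Q)$ is finite by Faltings' Theorem while $\PP^2(\Q)$ is infinite. Projection from $Q$ gives a morphism $\pi:C\to\PP^1$ defined over $\Q$ whose fibres are the divisors $C\cap L$ for lines $L$ through $Q$; since $C$ is a quartic and $Q\notin C$, Bézout gives $\deg\pi=4$. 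Equivalently, $\pi$ is the map attached to a base-point-free pencil inside the degree-$4$ canonical system $|K|$. Finally, Hilbert's irreducibility theorem (cited at the start of this section) shows that $\pi^{-1}(t)$ is an irreducible degree-$4$ divisor, hence a quartic point, for infinitely many $t\in\PP^1(\Q)$, giving infinitely many quartic points on $C$.

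The only facts about $C$ that are needed are its genus and its non-hyperellipticity: the genus is the one recorded in \Cref{cubicthm}, and non-hyperellipticity follows from \Cref{quadraticthm}. Once $C$ is known to be a non-hyperelliptic genus-$3$ curve, the existence of the degree-$4$ map to $\PP^1$ is automatic for every smooth plane quartic over $\Q$, so there is no serious obstacle here; this case is isolated from \Cref{gon4prop} and \Cref{deg4mapprop} only because $C$ has $\Q$-gonality $3$ (so no degree-$4$ map to $\PP^1$ is forced by the gonality) and because the projection $X_\Delta(49)\to X_0(49)$ has degree $3$ rather than $2$, while $X_0(49)$ has genus $1$ and supplies no useful source of quartic points by pullback.
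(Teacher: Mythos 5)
Your proof is correct and is essentially the paper's argument: both extract a $\Q$-rational degree-$4$ pencil from the canonical system of this non-hyperelliptic genus-$3$ curve (so $\deg K = 2g-2 = 4$ and $\ell(K)=3$) and then invoke Hilbert irreducibility. The paper packages this via Riemann--Roch (producing a function whose polar divisor is a $\Q$-rational canonical divisor) rather than by projecting the plane quartic model from an external rational point, but the underlying mechanism is the same.
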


\begin{proof}
    This curve is of genus $3$ and has $\Q$-gonality equal to $3$. Since it is defined over $\Q$, there exists a canonical divisor defined over $\Q$. Let us take such a divisor $K=P_1+P_2+P_3+P_4$. Then we have $\deg K=2g-2=4$ and $\ell(K)=g=3$. Moreover, by applying the Riemann-Roch theorem for the divisor $Q=P_1+P_2+P_3$ we obtain \[\ell(Q)-\ell(K-Q)=3-3+1=1\implies \ell(Q)=\ell(P_4)+1=1+1=2.\] Since $K$ is defined over $\Q$, by \cite[Appendix B.12]{Stichtenoth09} we have that \[L_\Q(K)=\Q(X_\Delta(N))\cap L(D)\] is of the same dimension $\ell(K)=3$ over $\Q$. Furthermore, because $\ell(Q)=2$, there exists a rational function in $\Q(X_\Delta(N))$ whose polar divisor is equal to $K$ and this gives us the desired degree $4$ rational morphism to $\PP^1$.
\end{proof}

\begin{prop}\label{deg4lmfdb}
    The modular curve $X_\Delta(N)$ has infinitely many quartic points for $(N,\Delta)=(28,\{\pm1,\pm11\})$.
\end{prop}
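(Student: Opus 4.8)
The plan is to produce a degree $4$ morphism $X_\Delta(28)\to\PP^1$ defined over $\Q$ and then invoke Hilbert's irreducibility theorem, exactly as in the preceding propositions. Here $X_\Delta(28)$ is the genus $4$ curve, which by \Cref{quadraticthm} is \emph{not} hyperelliptic (it is of genus $>1$ and is not the exceptional curve $X_{\{\pm1,\pm8\}}(21)$), and by \Cref{cubicthm} already carries infinitely many cubic points; since every elliptic curve of conductor dividing $28$ has rank $0$ and the remaining isogeny factors of its Jacobian have nontrivial (cubic) nebentypus and so are not elliptic curves over $\Q$, there is no positive rank elliptic quotient to exploit and the $g^1_3$ realizing the cubic points must be a genuine trigonal pencil. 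Consequently the quartic points have to come from a degree $4$ map to $\PP^1$.

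To build such a map I would mimic \Cref{deg4canonicaldivisor}, but now the canonical class $K$ has degree $2g-2=6$, so I subtract a rational divisor of degree $2$. Concretely, fix a $\Q$-rational effective divisor $E$ of degree $2$ (for instance twice a rational cusp, a sum of two rational cusps, or the sum of a quadratic point and its Galois conjugate) and set $D:=K-E$. Because $X_\Delta(28)$ is non-hyperelliptic it has no $g^1_2$, hence $\ell(E)=1$; Riemann--Roch applied to $D$ gives
\[
\ell(D)-\ell(K-D)=\deg D+1-g=4+1-4=1,
\]
and since $K-D=E$ with $\ell(E)=1$ we obtain $\ell(D)=2$ with $\deg D=4$. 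As $K$ and $E$ are defined over $\Q$, the class of $D$ is defined over $\Q$, and by \cite[Appendix B.12]{Stichtenoth09} the space $L_\Q(D)$ has dimension $2$ over $\Q$. Provided $|D|$ is base-point free, the associated rational function is the desired degree $4$ morphism to $\PP^1$.

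The one thing that needs verification, and the main obstacle, is base-point freeness of $|D|=|K-E|$. A point $P$ is a base point exactly when $\ell(K-E-P)=2$, i.e.\ when $E+P$ moves in a $g^1_3$; equivalently $E$ must be a subdivisor of some member of one of the (at most two) trigonal pencils. This is a closed, one-dimensional condition on $E$ inside $\Sym^2 X_\Delta(28)$, so a generic degree $2$ divisor avoids it, and for any fixed candidate it is a finite check. I would carry this out on the explicit model of $X_\Delta(28)$ coming from its \texttt{Magma} modular-curve model (equivalently, its LMFDB record): compute the canonical model in $\PP^3$, identify the trigonal pencil(s), choose a $\Q$-rational $E$ not dominated by any trigonal divisor, and confirm that the resulting $g^1_4$ is base-point free of degree $4$. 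Once this is done, Hilbert's irreducibility theorem yields infinitely many quartic points on $X_\Delta(28)$.
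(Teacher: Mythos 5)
Your strategy (realize a base-point-free $g^1_4$ as $|K-E|$ for a rational effective degree-$2$ divisor $E$, then apply Hilbert irreducibility) is sound in principle, and the Riemann--Roch bookkeeping is correct: $\ell(K-E)=2$ because the curve is non-hyperelliptic, and a base point of $|K-E|$ occurs exactly when $E$ is dominated by a trigonal divisor. But the step you defer is the whole proof, and the way you argue around it does not work over $\Q$. You say ``a generic degree $2$ divisor avoids'' the bad locus in $\Sym^2 X_\Delta(28)$; that is true over $\overline{\Q}$, but by Faltings the genus-$4$ curve $X_\Delta(28)$ has only finitely many rational points, and by \Cref{quadraticthm} only finitely many quadratic points, so $\Sym^2 X_\Delta(28)(\Q)$ is a \emph{finite} set. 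There is no ``generic'' rational choice of $E$: you have finitely many explicit candidates (sums of rational cusps, conjugate pairs of quadratic points), and you must actually verify that one of them escapes the trigonal loci. Since that computation is neither performed nor replaced by an a priori argument, the proof as written is incomplete. (A smaller quibble: the claim that the quartic points ``have to come from a degree $4$ map to $\PP^1$'' is only motivation and is not needed, which is fortunate, because as stated it would require knowing that every positive-dimensional abelian subvariety of the Jacobian has rank $0$, not just the elliptic factors.)

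The paper avoids all of this by a different and much shorter route: LMFDB records that $X_\Delta(28)$ admits a degree $2$ morphism over $\Q$ to $X_{\{\pm1\}}(14)$, which is an elliptic curve. Every elliptic curve over $\Q$ has infinitely many quadratic points (regardless of rank), and since $X_\Delta(28)$ has only finitely many points of degree $\le 3$ by \Cref{quadraticthm} and \Cref{cubicthm}, all but finitely many pullbacks of these quadratic points contain a quartic point. Note that composing this degree $2$ cover with the $x$-coordinate map $X_{\{\pm1\}}(14)\to\PP^1$ even produces the base-point-free $g^1_4$ over $\Q$ whose existence your argument needs, so your target object does exist --- but exhibiting it via $|K-E|$ still requires the finite check you postponed, whereas the covering argument requires none.
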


\begin{proof}
    This curve is of genus $4$ and has $\Q$-gonality equal to $3$. Its LMFDB label is $28.144.4.d.1$. We can see this because $[\SL_2(\Z):\Delta]=144$ and it is minimally covered by a degree $2$ map from the curve $X_{\{\pm1\}}(28)$. This can be seen on \begin{center}
        \url{https://beta.lmfdb.org/ModularCurve/Q/28.288.10.g.1/}.
    \end{center}
    LMFDB also tells us that this curve $X_\Delta(N)$ is a minimal cover of $X_{\{\pm1\}}(14)$ of degree $2$. This is a rational morphism and the curve $X_{\{\pm1\}}(14)$ is an elliptic curve. This can be seen on \begin{center}
        \url{https://beta.lmfdb.org/ModularCurve/Q/28.144.4.d.1/}.
    \end{center}
    The pullbacks of quadratic points on $X_{\{\pm1\}}(14)$ are a source of infinitely many quartic points on $X_\Delta(N)$.
\end{proof}

\begin{prop}\label{deg4magma}
    The modular curve $X_\Delta(N)$ has infinitely many quartic points for \[(N,\Delta)\in\{(26,\{\pm1,\pm5\}),(37,\{\pm1,\pm6,\pm10,\pm10,\pm11,\pm14\})\}.\]
\end{prop}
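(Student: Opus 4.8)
The plan is to exhibit, for each of these two genus $4$ curves, a morphism of degree $4$ to $\PP^1$ defined over $\Q$; once such a map is in hand, Hilbert's Irreducibility Theorem delivers infinitely many quartic points, exactly as in \Cref{deg4mapprop}. I stress that a genuine map to $\PP^1$ is what is required here, rather than a map to a positive-rank elliptic curve: both curves already carry infinitely many cubic points, since they occur in the list of \Cref{cubicthm}, so the elliptic-curve argument --- pulling back rational points on a curve that has no points of degree $<4$ --- is unavailable.

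Concretely, I would first compute a canonical model of $X_\Delta(N)$ over $\Q$ in \texttt{Magma}, using the $q$-expansions of a basis of weight-$2$ cusp forms for $\Gamma_\Delta(N)$. Each curve is non-hyperelliptic, since the only hyperelliptic intermediate curve is $X_{\{\pm1,\pm8\}}(21)$ by \Cref{quadraticthm}, so its canonical image is cut out by a quadric and a cubic in $\PP^3$ and is geometrically trigonal, carrying a $g^1_3$. To build a degree $4$ map I would apply Riemann--Roch to the canonical class $K$, which has degree $6$ and is defined over $\Q$: for a rational effective divisor $E$ of degree $2$ --- for instance one supported on the rational cusps, which \texttt{Magma} locates on the model --- the class $D := K - E$ has degree $4$, and since the curve is non-hyperelliptic, $\ell(D) = 1 + \ell(E) = 2$. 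Thus $|D|$ is a $g^1_4$, and, exactly as in \Cref{deg4canonicaldivisor}, the rationality of $K$ and $E$ ensures that $|D|$ is defined over $\Q$. I would then check in \texttt{Magma} that $|D|$ is base-point-free; the associated morphism $X_\Delta(N) \to \PP^1$ is then of degree exactly $4$ and defined over $\Q$, and Hilbert's Irreducibility Theorem finishes the argument.

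The main obstacle is the base-point-freeness verification. A point $P$ is a base point of $|K-E|$ exactly when $\ell(E+P) = 2$, that is, when $E + P$ lies in the trigonal pencil and $E$ is contained in one of its fibres; for such $E$ the system collapses to the degree $3$ trigonal map and yields no quartic morphism. The computation must therefore choose $E$ outside the one-dimensional locus of subdivisors of trigonal fibres --- in practice taking $E$ supported away from the ramification of the trigonal map --- and then confirm on the explicit model both that $|D|$ is base-point-free and that the resulting map has degree $4$. Producing a correct rational canonical model of each modular curve and carrying out this verification is where the \texttt{Magma} computation does the real work.
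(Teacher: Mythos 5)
Your proposal is correct and takes essentially the same route as the paper: on a genus-$4$ curve any degree-$4$ divisor $D$ with $\ell(D)=2$ is of the form $K-E$ with $E$ effective of degree $2$, so your construction via $K-E$ and the paper's direct \texttt{Magma} search for a sum of four rational points with Riemann--Roch dimension $2$ yield the same rational $g^1_4$'s, and both conclude by Hilbert irreducibility. Your explicit discussion of base-point-freeness (equivalently, that the resulting map has degree $4$ and does not collapse to the trigonal pencil) addresses a point the paper leaves implicit in its computation, and your remark that a map to a positive-rank elliptic curve would not suffice here, because these two curves already have infinitely many cubic points, is exactly right.
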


\begin{proof}
    These curves are of genus $4$ and have $\Q$-gonality equal to $3$. Their LMFDB labels are $26.126.4.a.1$ and $37.114.4.b.1$. This can be seen by looking at the generators of these two curves on \begin{center}
        \url{https://beta.lmfdb.org/ModularCurve/Q/26.126.4.a.1/},
        \url{https://beta.lmfdb.org/ModularCurve/Q/37.114.4.b.1/}.
    \end{center}
    The canonical models of these two curves consist of the intersection of a quadric and a cubic. We retrieved them from LMFDB and used \texttt{Magma} to find a degree $4$ rational morphism to $\PP^1$.

    We did this by finding a degree $4$ effective rational divisor on $X_\Delta(N)$ of the form $1+1+1+1$ (that is, a sum of $4$ rational points) with Riemann-Roch dimension equal to $2$. Since such divisors are defined over $\Q$, \texttt{Magma} also computes their Riemann-Roch spaces over $\Q$ \cite{magma}, ensuring that the functions found in these Riemann-Roch spaces are also defined over $\Q$ \cite[Proof of Proposition 6.6]{DensityDegree5}.
\end{proof}

\section{Positive rank tetraelliptic curves}\label{tetraellipticsection}

We will call the curve $C/\Q$ positive rank tetraelliptic if it admits a degree $4$ morphism to a positive rank elliptic curve $E/\Q$.

All intermediate curves $X_\Delta(N)$ for $N$ in \Cref{quarticthmx0n} not listed in \Cref{mainthm} have genus $g\geq8$. Furthermore, \cite[Theorem 1.3]{Orlic2024Intermediate} tells us that all these curves have $\Q$-gonality greater than $4$. From \Cref{degree4morphismcor} it follows that it is enough to prove that these curves $X_\Delta(N)$ do not admit a degree $4$ rational morphism to a positive rank elliptic curve.

Suppose that there is a degree $4$ rational morphism $f:X_\Delta(N)\to E$, where $E/\Q$ is of positive rank. Since all curves $X_\Delta(N)$ have rational cusps, we can assume that $f(P)=0$ for some fixed $P\in X_\Delta(N)(\Q)$ (because $f$ and $f-f(P)$ have the same degree). Due to the universal property of the Jacobian \cite[Theorem 6.1]{Milne1986}, $f$ factors through $J_\Delta(N)$. Therefore, $E$ appears as an isogeny factor in $J_\Delta(N)$.

The space of modular forms $S_2(\Gamma_1(N))$ decomposes into subspaces according to the nebentypus character. By definition, $S_2(\Gamma_0(N)) \subseteq S_2(\Gamma_1(N))$ is the subspace consisting of modular forms with trivial character \cite[Section 4.3]{modular}. This implies that the simple isogeny factors of $J_0(N)$ are exactly the simple factors of $J_\Delta(N)$ whose associated modular form has trivial nebentypus character. Modular forms associated with elliptic curves $E/\Q$ have trivial character \cite[Section 6.6]{modular}. In particular, this means that an elliptic curve $E$ appears as an isogeny factor in $J_0(N)$ with the same multiplicity as in $J_\Delta(N)$. The conductor of an elliptic curve $E$ is the same as the level of the associated newform, so it follows that $\textup{Cond}(E) \mid N$ for all $1$-dimensional isogeny factors of $J_\Delta(N)$.

\begin{prop}\label{deltatetraelliptic_rank0}
    The modular curve $X_\Delta(N)$ is not positive rank tetraelliptic for
    \begin{align*}
        N\in\{&29,31,33,34,35,38,39,40,41,42,44,45,48,49,50,51,52,54,55,56,60,62,63,64,\\
        &66,68,69,70,71,72,75,78,80,81,85,87,95,96,98,100,103,104,109,125,191\}.
    \end{align*} and all possible $\Delta$.
\end{prop}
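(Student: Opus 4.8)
The goal of Proposition~\ref{deltatetraelliptic_rank0} is to show that for each listed level $N$ and every admissible $\Delta$, the curve $X_\Delta(N)$ admits no degree $4$ map to a positive-rank elliptic curve over $\Q$. The plan is to leverage the factorization argument already set up in the preamble: any such map $f\colon X_\Delta(N)\to E$ with $E$ of positive rank forces $E$ to appear as a $1$-dimensional isogeny factor of $J_\Delta(N)$, and since $E/\Q$ has trivial nebentypus, $E$ is in fact an isogeny factor of $J_0(N)$ with $\operatorname{Cond}(E)\mid N$. So the first step is, for each $N$ in the list, to enumerate the isogeny classes of elliptic curves $E/\Q$ with $\operatorname{Cond}(E)\mid N$ that actually occur as factors of $J_\Delta(N)$ (equivalently, of $J_0(N)$), and to record their Mordell--Weil ranks over $\Q$. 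The claim to establish is that every such $E$ has rank $0$, which immediately rules out a positive-rank tetraelliptic structure.

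The key computational step is therefore to verify, level by level, that $J_0(N)$ (and hence every relevant intermediate $J_\Delta(N)$) contains \emph{no} positive-rank elliptic factor for each $N$ in the list. I would do this in \texttt{Magma} by computing the newform decomposition of $S_2(\Gamma_0(N))$ together with oldform contributions from divisors of $N$, isolating the $1$-dimensional pieces, matching them to elliptic curves $E/\Q$ via their conductors, and reading off the ranks (either from the analytic rank / LMFDB data or by a direct descent). Because an elliptic curve appears in $J_\Delta(N)$ with the same multiplicity as in $J_0(N)$, it suffices to check this at the level of $J_0(N)$; the intermediate $\Delta$ plays no further role once this is verified. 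The levels in the list were presumably chosen precisely because they pass this test, so the verification should go through uniformly.

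The main subtlety, rather than a genuine obstacle, is making sure the reduction is airtight: one must confirm that a degree $4$ map to $E$ really does produce $E$ as a \emph{one-dimensional} factor of the Jacobian and not merely an abelian subvariety isogenous to a power of $E$ or to something containing $E$. This is handled by the universal property of the Jacobian together with the nebentypus/trivial-character argument already given, which pins the factor down to a genuine elliptic curve of conductor dividing $N$. A second point to be careful about is oldforms: an elliptic factor of $J_0(N)$ may arise from a newform of conductor a proper divisor $M\mid N$, and one must include all such $M$ when enumerating candidate $E$. I expect no level in the list to harbor a positive-rank elliptic factor, so once the enumeration and rank data are assembled the proposition follows by the reduction, and the actual content is the finite \texttt{Magma} computation confirming rank $0$ for every elliptic factor at each listed level.
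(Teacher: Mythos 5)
Your proposal is correct and follows essentially the same route as the paper: reduce via the nebentypus/Jacobian-factorization argument to showing that no positive-rank elliptic curve of conductor dividing $N$ occurs in the decomposition of $J_0(N)$, and then verify this computationally. The only cosmetic difference is that the paper dispatches all but four levels ($85$, $103$, $125$, $191$) by citing an existing result that $J_0(N)$ itself has rank $0$ over $\Q$, reserving the factor-by-factor rank check for those four, whereas you propose running the enumeration uniformly for every level.
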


\begin{proof}
    For all these levels $N$ there are no positive rank elliptic curves $E/\Q$ with conductor dividing $N$. For $N\neq85,103,125,191$ this is because the Jacobian $J_0(N)$ is of rank $0$ over $\Q$ by \cite[Theorem 3.1]{Deg3Class} and for $N\in\{85,103,125,191\}$ the Jacobian $J_0(N)$ is of positive rank, but its decomposition contains no elliptic curves of positive rank. 
\end{proof}

It remains to study the situation when the decomposition of the Jacobian $J_\Delta(N)$ contains positive rank elliptic curves $E/\Q$. We divide the discussion into two cases: $\textup{Cond }E=N$ and $\textup{Cond }E<N$.

\subsection{Degree pairing and covers of curves.}\label{degreepairing-section}
In \cite[\S 2]{DerickxOrlic23} there is an outline of a strategy on how to determine whether a given curve $C$ over $\Q$ of genus $>1$ has a map of degree $d$ to an elliptic curve. The key result in this strategy is the following proposition:

\begin{prop}[{\cite[Proposition 1.10]{DerickxOrlic23}}]\label{prop:derickx-orlic-quadratic-form}
Let $C$ be a curve over $\mathbb Q$ with at least one rational point and $E$ an elliptic curve over $\mathbb Q$ that occurs as an isogeny factor of $J(C)$ with multiplicity $n \geq 1$. Then the degree map $\deg: \Hom_\Q(C,E) \to \Z$ can be extended to a positive definite quadratic form on $\Hom_\Q(J(C),E) \cong \Z^n$.
\end{prop}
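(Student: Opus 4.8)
The plan is to use the rational point to turn everything into homomorphisms of abelian varieties, observe that the degree function is \emph{already} defined on the whole group $\Hom_\Q(J,E)$, and then exhibit it as a quadratic form by pairing pushforward with pullback through the canonical principal polarizations; positive definiteness will then be essentially free.

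First I would fix $P_0\in C(\Q)$ and let $\iota\colon C\to J:=J(C)$, $P\mapsto[P-P_0]$, be the associated Abel--Jacobi morphism, which is defined over $\Q$. By the universal property of the Jacobian \cite[Theorem 6.1]{Milne1986}, composition with $\iota$ gives a bijection between $\Hom_\Q(J,E)$ and the set of $\Q$-morphisms $f\colon C\to E$ with $f(P_0)=0$, its inverse being $\phi\mapsto f_\phi:=\phi\circ\iota$ and satisfying $(f_\phi)_*=\phi$. Since $\deg f$ is invariant under translation on $E$, every morphism $C\to E$ has a unique base-point-preserving representative, so the degree function is in fact already defined on all of $\Hom_\Q(J,E)$. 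To identify this group with $\Z^n$, I would use the isogeny decomposition $J\sim E^n\times B$ with $B$ having no isogeny factor isogenous to $E$; then $\Hom_\Q(B,E)=0$, while $\End_\Q(E)=\Z$ because an elliptic curve over $\Q$ carries no $\Q$-rational complex multiplication, giving $\Hom_\Q(J,E)\cong\End_\Q(E)^{\,n}\cong\Z^n$.

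The heart of the argument is to show that the (already defined) degree function is a quadratic form, and for this I would introduce the canonical principal polarizations $\lambda\colon J\xrightarrow{\sim}\widehat J$ and $\mu\colon E\xrightarrow{\sim}\widehat E$, both over $\Q$, and the transpose $\phi^\dagger:=\lambda^{-1}\circ\widehat\phi\circ\mu\colon E\to J$. Setting $q(\phi):=\phi\circ\phi^\dagger\in\End_\Q(E)=\Z$, the key identity is $q(f_*)=\deg f$. This follows from two standard facts: the pushforward--pullback relation $f_*\circ f^*=[\deg f]$ on $E$, and the compatibility $f^*=(f_*)^\dagger$, i.e.\ that the contravariant Picard functoriality $f^*\colon E\to J$ is the dual of the covariant Albanese map $f_*$ under the polarization identifications. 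Granting these, $q(f_*)=f_*\circ f^*=[\deg f]$. To see that $q$ is quadratic, I would compute its associated bilinear form: using $\widehat\mu=\mu$, $\widehat\lambda=\lambda$ and double duality one gets $(\phi\circ\psi^\dagger)^{\dagger_E}=\psi\circ\phi^\dagger$ for the Rosati transpose on $\End_\Q(E)$, and since every element of $\End_\Q(E)=\Z$ is fixed by this involution, $\phi\circ\psi^\dagger=\psi\circ\phi^\dagger$; hence $B(\phi,\psi):=\phi\circ\psi^\dagger$ is a symmetric, $\Z$-valued bilinear form with $B(\phi,\phi)=q(\phi)$.

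Positive definiteness is then immediate from the setup: for any $\phi\in\Hom_\Q(J,E)$ we have $\phi=(f_\phi)_*$ with $f_\phi=\phi\circ\iota$, so $q(\phi)=\deg f_\phi$, which is the degree of a morphism of curves and is therefore nonnegative, vanishing exactly when $f_\phi$ is constant, i.e.\ when $\phi\circ\iota=0$, i.e.\ when $\phi=0$. I expect the main obstacle to be precisely the key identity $q(f_*)=\deg f$, and within it the compatibility $f^*=(f_*)^\dagger$ relating the two functorialities through the principal polarizations; the decomposition $\Hom_\Q(J,E)\cong\Z^n$ and the positivity are then routine, the latter because recognizing $q$ as the honest geometric degree on the \emph{entire} lattice sidesteps any separate appeal to the positivity of the Rosati involution.
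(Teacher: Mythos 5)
Your proof is correct and follows essentially the same route as the source this paper cites for the statement: your bilinear form $B(\phi,\psi)=\phi\circ\psi^\dagger$ is, via the compatibility $g^*=(g_*)^\dagger$, exactly the degree pairing $\left<f,g\right>=f_*\circ g^*$ of \cite[Definition 2.2, Proposition 2.5, Lemma 2.1]{DerickxOrlic23} that the paper recalls in Section 4.1. The only cosmetic difference is that you derive symmetry from the triviality of the Rosati involution on $\End_\Q(E)=\Z$ rather than verifying it directly on the pairing.
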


Using the above proposition, the problem of deciding whether there exists a degree $d$ map $C \to E$ reduces to checking whether $d$ is a value of the quadratic form in \Cref{prop:derickx-orlic-quadratic-form}.

The following definition gives a more precise description of this quadratic form.

\begin{definition}[{\cite[Definition 2.2]{DerickxOrlic23}}]
Let $C,E$ be curves over a field $k$ with $E$ being an elliptic curve. The degree pairing is defined on $\Hom(C,E)$ as
\begin{align*}
\left<\wideunder,\wideunder\right> : \Hom(C,E)\times\Hom(C,E) & \to \End(J(E))\\
f,g &\mapsto f_*\circ g^*.
\end{align*}
If $P \in C(k)$, then we can use the map $f_P:C\to J(C)$, $x\mapsto[x-P]$ to define the degree pairing on $\Hom(J(C),E)$ as
\begin{align*}
\left<\wideunder,\wideunder\right> : \Hom(J(C),E)\times\Hom(J(C),E) & \to  \End(J(E)),\\
f,g &\mapsto (f \circ f_P)_*\circ (g \circ f_P)^*.
\end{align*}

\end{definition}

\cite[Proposition 2.5]{DerickxOrlic23} tells us that the degree pairing on $\Hom(J(C),E)$ is bilinear. We will also write $\left<f,g\right>:=f_*\circ g^*$ for $f,g\in\Hom(C,C')$ (this is not a pairing when $C'$ is not elliptic since $\Hom(C,C')$ is not an abelian group in that case). With this notation we have $\langle f, f \rangle = [\deg f]$ for $f \in \Hom(C, C')$ \cite[Lemma 2.1]{DerickxOrlic23}.

As we have already seen in the introduction of this section, any elliptic curve $E$ over $\Q$ occurs with the same multiplicity in the isogeny decompositions of $J_\Delta(N)$ and $J_0(N)$. In this subsection we will study more generally how the quadratic form from \Cref{prop:derickx-orlic-quadratic-form} changes when $u: C \to C'$ is a map of curves over $\Q$ and $E$ occurs with the same multiplicity in $J(C)$ and $J(C')$.

The following lemmas encapsulate the key idea we will use to determine possible degrees of morphisms $X_\Delta(N)\to E$.

\begin{lem}\label{lem:degree_pairing_covering_curve}
Let $u: C\to C'$ be a map of nice curves (smooth, projective, and geometrically integral) over $\Q$, and $E/\Q$ an elliptic curve that appears with multiplicity $n>0$ in both $J(C)$ and $J(C')$. %Define $\Sigma := \ker u^* \subseteq J(C')$. 
Let $f_1,\ldots,f_n: C'\to E$ be maps such that $f_{1,*},\ldots,f_{n,*}$ form a basis for $\Hom_\Q(J(C'),E)\cong \Z^n$. Let $A \subseteq J(C')$ be the unique abelian subvariety isogenous to $E^n$ and assume that $C(\Q) \neq \emptyset$ and that $u^*_{\restriction_A} : A \to J(C)$ has trivial kernel. Then we have the following:
\begin{enumerate}
    \item $(f_{1} \circ u)_*,\ldots,(f_{1} \circ u)_*$ form a basis for $\Hom_\Q(J(C), E)$.
    \item The degree pairing on $\Hom_\Q(J(C), E)$ is the degree pairing on $\Hom_\Q(J(C'), E)$ multiplied by $\deg u$.
\end{enumerate}
\end{lem}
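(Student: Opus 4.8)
The plan is to prove the two assertions in the reverse of the stated order, since the pairing identity in (2) is a one-line computation once the basis in (1) is in place, and the genuine content is the \emph{integrality} in (1). Throughout I write $\alpha := (-)\circ u_* \colon \Hom_\Q(J(C'),E)\to\Hom_\Q(J(C),E)$ and $\beta := (-)\circ u^*$ for the reverse arrow, and I note that both $\Hom$-groups are free of rank $n$ because $E$ occurs with multiplicity $n$ on each side.

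For (2), I would argue directly. By functoriality of pushforward and pullback, $(f_i\circ u)_* = f_{i,*}\circ u_*$ and $(f_j\circ u)^* = u^*\circ f_j^*$, so that
\[\langle f_i\circ u,\ f_j\circ u\rangle = f_{i,*}\circ u_*\circ u^*\circ f_j^* = f_{i,*}\circ[\deg u]\circ f_j^* = (\deg u)\,\langle f_i,f_j\rangle,\]
where the middle equality uses $u_*\circ u^* = [\deg u]$ on $J(C')$ and the centrality of $[\deg u]$ in $\End(J(E))$. Granting (1), this says precisely that the Gram matrix of the degree pairing in the basis $\{(f_i\circ u)_*\}$ is $\deg u$ times the Gram matrix of $\{f_{i,*}\}$, which is assertion (2).

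For (1), since $(f_i\circ u)_* = \alpha(f_{i,*})$ and $\{f_{i,*}\}$ is a basis by hypothesis, the claim is equivalent to $\alpha$ being an isomorphism of $\Z$-modules. First I would upgrade the hypothesis: as $u^*|_A\colon A\to J(C)$ has trivial kernel it is injective, its image is an $E$-isotypic abelian subvariety of dimension $n$, hence equals the unique $E$-isotypic part $A'\subseteq J(C)$, and a bijective homomorphism of abelian varieties is an isomorphism, so $\iota := u^*|_A\colon A\xrightarrow{\sim}A'$.

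The hard part will be the integrality: the easy relation $\beta\circ\alpha = [\deg u]$ only shows $\alpha$ is injective with $[\,J(C)\text{-side}:\alpha(\text{image})\,]$ dividing $(\deg u)^n$, which is not enough to conclude $\alpha$ is surjective over $\Z$. To resolve this I would pass to the dual picture. Using the principal polarizations of $J(C)$ and $J(C')$ together with the self-duality of $E$, there is a canonical isomorphism $\Hom_\Q(J(C),E)\cong\Hom_\Q(E,J(C))$ under which precomposition with $u_*$ corresponds to postcomposition with $(u_*)^\vee = u^*$, since the Albanese and Picard functorialities of the Jacobian are dual to one another (cf. \cite{Milne1986}). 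Now every homomorphism $E\to J(C')$ factors through $A$ and every homomorphism $E\to J(C)$ factors through $A'$, because the image of such a map is $E$-isotypic; under these identifications $\alpha$ becomes $\iota\circ(-)\colon\Hom(E,A)\to\Hom(E,A')$. As $\iota$ is an isomorphism, $\iota\circ(-)$ is an isomorphism of $\Z$-modules, hence so is $\alpha$, and therefore $\{(f_i\circ u)_*\}$ is a $\Z$-basis of $\Hom_\Q(J(C),E)$. The hypothesis $C(\Q)\neq\emptyset$ enters only to provide the base point $P$ and the map $f_P$, which is what puts the degree pairing on $\Hom_\Q(J(C),E)$ in the first place.
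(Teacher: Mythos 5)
Your proof is correct and follows essentially the same route as the paper's: part (2) is the identical Gram-matrix computation using $u_*\circ u^*=[\deg u]$, and part (1) rests on the same two facts the paper uses, namely that $u^*_{\restriction_A}$ maps $A$ isomorphically onto the unique $E$-isotypic subvariety of $J(C)$ and that duality exchanges precomposition with $u_*$ for postcomposition with $u^*$. Your reformulation of the integrality step as "$\alpha$ corresponds to $\iota\circ(-)$ on $\Hom(E,A)\to\Hom(E,A')$" is just the dual phrasing of the paper's factorization $f=f'\circ\iota^\vee\circ u_*$.
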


\begin{proof}
For the first part, let $\iota : A \to J(C')$ denote the inclusion map and $\iota^\vee :J(C')=J(C')^\vee \to A^\vee$ its dual. Then by assumption $u^*_{\restriction_A} = u^* \circ \iota : A \to J(C)$ has trivial kernel and hence $u^* \circ \iota$ induces an isomorphism between $A$ and its image $J(C)$. Since $E$ occurs with multiplicity $n$ in $J(C)$, the image of $A$ in $J(C)$ is actually the unique abelian subvariety of $J(C)$ isogenous to $E^n$. In particular, any map $f:E \to J(C)$ factors as $u^* \circ \iota$. Dually, any map $f:J(C) \to E$ can be written as $f' \circ (u^* \circ \iota)^\vee = f' \circ\iota^\vee \circ u_*$ for some $f' : A^\vee \to E$. Writing $f' \circ\iota^\vee = \sum a_i f_{i,*}$ shows that $f = (\sum a_i f_{i,*}) \circ u_*=\sum a_i (f_i\circ u)_*$ from which the first part follows.

The second part follows since if $i,j$ are integers between $1$ and $n$, then 
\begin{align*}
\langle f_{i} \circ u,f_{j} \circ u \rangle :=& (f_{i} \circ u)_* \circ (f_{i} \circ u)^* = f_{i,*} \circ u_* \circ u^* \circ f_j^* = \ldots \\
=& f_{i,*} \circ [\deg u] \circ f_{j}^* = [\deg u]\circ f_{i,*} \circ f_{j}^* = [\deg u] \langle f_i, f_j \rangle \end{align*}
\end{proof}

\begin{lem}\label{lem:degree_pairing_isogeny}
Let $C$ be a nice curve over $\Q$ with $C(\Q) \neq \emptyset$. Let $E, E'$ be isogenous elliptic curves over $\Q$ and $\phi: E \to E'$ an isogeny of minimal degree. Let $n$ be the multiplicity with which $E$ occurs in $J(C)$ and let $f_1,\ldots,f_n: C \to E$ be maps and define $\xi:= (f_{1,*},\ldots,f_{n,*}): J(C) \to E^n$. If $\xi^\vee : E^n \to J(C)$ has trivial kernel, then
\begin{enumerate}
    \item $f_{1,*},\ldots,f_{n,*}$ and $(\phi \circ f_{1})_*,\ldots,(\phi \circ f_{n})_*$ form a basis of $\Hom_\Q(J(C),E)$ and $\Hom_\Q(J(C),E')$, respectively.
    \item The degree pairing on $\Hom_\Q(J(C), E')$ is the degree pairing on $\Hom_\Q(J(C), E)$ multiplied by $\deg \phi$.
\end{enumerate}
\end{lem}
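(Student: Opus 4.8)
The plan is to mirror the structure of the proof of \Cref{lem:degree_pairing_covering_curve}, since the two statements are formally parallel: there $u^*$ pulled back maps along a covering of curves, while here $\phi_*$ (equivalently post-composition with $\phi$) transports maps along an isogeny of targets. The hypothesis that $\xi^\vee : E^n \to J(C)$ has trivial kernel plays exactly the role that the injectivity of $u^*_{\restriction_A}$ played before: it guarantees that $E^n$ embeds in $J(C)$ as the unique abelian subvariety isogenous to $E^n$, so that every map $J(C) \to E$ factors through this copy of $E^n$ and is thus a $\Z$-linear combination of the $f_{i,*}$.

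For the first part, I would first argue that $f_{1,*},\ldots,f_{n,*}$ form a basis of $\Hom_\Q(J(C),E)$. The map $\xi = (f_{1,*},\ldots,f_{n,*}): J(C) \to E^n$ has dual $\xi^\vee$ with trivial kernel by hypothesis, so $\xi^\vee$ is an isogeny onto the unique subvariety $A \subseteq J(C)$ isogenous to $E^n$; dually $\xi$ realizes $E^n$ as a quotient, and any $f: J(C) \to E$ factors through this quotient, giving $f = \sum a_i f_{i,*}$ with $a_i \in \Z$. Since $\phi: E \to E'$ is an isogeny, post-composition $g \mapsto \phi \circ g$ is injective on Hom-groups and $(\phi \circ f_1)_*,\ldots,(\phi \circ f_n)_*$ are $\Q$-linearly independent; because $E$ and $E'$ occur with the same multiplicity $n$ in $J(C)$ (they are isogenous, hence share the same isogeny factor), $\Hom_\Q(J(C),E') \cong \Z^n$ and these $n$ independent elements form a basis.

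For the second part, the computation is the direct analogue of the one in \Cref{lem:degree_pairing_covering_curve}, using $\phi^* \circ \phi_* = [\deg \phi]$ on $E$ (or, via duality, $\phi_* \circ \phi^* = [\deg\phi]$ on $E'$). Concretely, for $1 \le i,j \le n$ one has
\begin{align*}
\langle \phi \circ f_i, \phi \circ f_j \rangle :=& (\phi \circ f_i)_* \circ (\phi \circ f_j)^* = \phi_* \circ f_{i,*} \circ f_j^* \circ \phi^* \\
=& \phi_* \circ \langle f_i, f_j \rangle \circ \phi^*,
\end{align*}
and since $\langle f_i, f_j \rangle \in \End(E) $ commutes with $\phi^*\circ\phi_*$ in the relevant sense, this collapses to $[\deg \phi] \langle f_i, f_j \rangle$. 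I expect the main obstacle to be precisely this last collapsing step: one must be careful about whether the degree pairing lands in $\End(J(E)) = \End(E)$ versus $\End(E')$ and about the direction of $\phi_*$ versus $\phi^*$, so that the identity $\phi_* \circ \alpha \circ \phi^* = [\deg\phi]\,\alpha$ is applied on the correct elliptic curve with $\alpha$ an endomorphism that genuinely commutes past $\phi$. Checking that $\deg\phi$ being \emph{minimal} is not actually needed for the algebra (only injectivity of $\xi^\vee$ and the isogeny $\phi$ matter) is a small sanity check I would also record.
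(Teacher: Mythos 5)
Your overall strategy matches the paper's, and your computation for part (2) is essentially identical to the paper's (including the correct handling of the subtlety that $\langle f_i,f_j\rangle\in\End_\Q(E)$ is multiplication by an integer, hence commutes past $\phi^*$ so that $\phi_*\circ\phi^*=[\deg\phi]$ can be applied on $E'$). However, there is a genuine gap in your argument for part (1), in the step where you conclude that $(\phi\circ f_1)_*,\ldots,(\phi\circ f_n)_*$ form a basis of $\Hom_\Q(J(C),E')$. You argue that these $n$ elements are $\Q$-linearly independent and that $\Hom_\Q(J(C),E')\cong\Z^n$, and conclude they are a basis. That inference is false over $\Z$: $n$ linearly independent elements of a free $\Z$-module of rank $n$ only generate a finite-index sublattice (e.g.\ $2\Z\subset\Z$). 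You must show the index is $1$, i.e.\ that \emph{every} $f\in\Hom_\Q(J(C),E')$ is a $\Z$-linear combination of the $(\phi\circ f_i)_*$. The paper does this by factoring $f=f'\circ\xi$ with $f'\in\Hom_\Q(E^n,E')\cong\Hom_\Q(E,E')^n$ and then using that $\Hom_\Q(E,E')\cong\Z$ is generated, as a $\Z$-module, by $\phi$ -- so each component of $f'$ factors through $\phi$ and $f=\sum[a_i](\phi\circ f_i)_*$.

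This is also exactly where your closing ``sanity check'' goes wrong: the minimality of $\deg\phi$ \emph{is} needed. For elliptic curves over $\Q$ one has $\Hom_\Q(E,E')\cong\Z$, and an isogeny of minimal degree is precisely a generator of this group; for a non-minimal isogeny $\phi=[m]\circ\phi_0$ with $|m|>1$, the elements $(\phi\circ f_i)_*$ generate only an index-$|m|^n$ sublattice of $\Hom_\Q(J(C),E')$, so part (1) fails, and with it the interpretation of part (2) as a statement about the degree pairing on all of $\Hom_\Q(J(C),E')$ in the given basis. Your identity $\langle\phi\circ f_i,\phi\circ f_j\rangle=[\deg\phi]\langle f_i,f_j\rangle$ would still hold for any isogeny, but it would then not describe the full quadratic form on $\Hom_\Q(J(C),E')$, which is what the lemma asserts and what the application in \Cref{quadraticform_coefficients} requires.
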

\begin{proof}
For the first part, let $f \in \Hom_\Q(J(C),E')$. Since $\xi^\vee$ is injective and the multiplicity of $E$ as an isogeny factor in $J(C)$ is equal to $n$, we know that $f^\vee : E' \to J(C)$ has to factor through $\xi^\vee$. Dually, this means that there is an $f' : E^n \to E'$ such that $f=f'\circ \xi$. There is a canonical isomorphism $\Hom_\Q(E^n, E')\cong\Hom_\Q(E, E')^n$ sending $f' \in \Hom_\Q(E^n, E')$ to its restriction on each factor of $E^n$. Now $\Hom_\Q(E, E')\cong \Z$ and is generated as a $\Z$-module by $\phi$. In particular, the restriction of $f'$ to each factor of $E^n$ factors through $\phi$, implying that $f'$ has to factor as $\phi \circ f''$ for some $f'' \in \Hom(E^n,E)$. Under the isomorphism $\Hom_\Q(E^n,E) \cong \Hom_\Q(E,E)^n=\Z^n$ we have $f''=([a_1],\ldots,[a_n])$ for some integers $a_1,\ldots, a_n$. Putting this all together means $$f=f'\circ \xi = \phi  \circ f''\circ \xi=\sum_{i=1}^n [a_i](\phi \circ f_i)_{*}.$$ This proves that $(\phi \circ f_{1})_*,\ldots,(\phi \circ f_{n})_*$ form a basis of $\Hom_\Q(J(C),E')$. To get that $f_{1,*},\ldots,f_{n,*}$ form a basis of $\Hom_\Q(J(C),E)$ just take $E=E'$ and $\phi = \Id_E$.

For the second part, we compute the degree pairing on the given basis. To this end, let $i,j$ be integers between $1$ and $n$. The second part follows from the computation \[\langle \phi \circ f_{i},\phi \circ f_{j}\rangle:=\phi_* \circ f_{i,*} \circ f_{j}^* \circ \phi^* = \phi_* \circ \langle f_i, f_j \rangle\circ \phi^* =\phi_*\circ \phi^* \circ \langle f_i, f_j \rangle =[\deg \phi] \langle f_i, f_j \rangle,\] where in the above we used that $\langle f_i, f_j \rangle \in \End_\Q(E)$ is multiplication by some integer so it commutes with $\phi^*$.
\end{proof}

\subsection{Conductor $N$}\label{section_condN}

Let $E/\Q$ be a strong Weil curve of conductor $N$. By \cite[Remark 3.4]{DerickxOrlic23}, the map $E\to J_0(N)$ induced by the minimal degree modular parametrization $f: X_0(N)\to E$ is injective. We also consider the map \[u^*: J_0(N)\to J_1(N)\] induced by the natural projection $u:X_1(N)\to X_0(N)$. Its kernel $\Sigma(N):=\ker u^*$ is called the Shimura subgroup. The kernel of the composition map \[E\to J_0(N)\xrightarrow{u^*} J_1(N)\] is $\Sigma(N)\cap E$ and is called the Shimura subgroup of $E$. If $\{\pm1\}\subseteq \Delta \subseteq(\Z/N\Z)^\times$ is a subgroup, then similarly we get a map $u^*_\Delta: J_0(N)\to J_\Delta(N)$. We denote its kernel by $\Sigma(\Delta)$. Since $u^*$ factors through $u^*_\Delta$, we have the inclusion $\Sigma(\Delta)\subseteq \Sigma(N)$.

\begin{prop}\label{shimurakernel2group}
    Let $E/\Q$ be a strong Weil curve of conductor $N$ and odd analytic rank. Then the kernel of the composition map \[E\to J_0(N)\to J_\Delta(N)\] is contained in $E[2]$ for all groups $\{\pm1\}\subseteq \Delta\subseteq (\Z/N\Z)^\times$.
\end{prop}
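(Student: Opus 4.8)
The plan is to identify the kernel explicitly and then force it to be $2$-torsion by playing off the action of the Fricke (Atkin--Lehner) involution $w_N$ against the odd-rank hypothesis. First I would make the reduction precise. Since $E\to J_0(N)$ is injective by \cite[Remark 3.4]{DerickxOrlic23}, and the map in the statement is the composition $E\to J_0(N)\xrightarrow{u_\Delta^*}J_\Delta(N)$, its kernel is $E\cap\ker u_\Delta^* = E\cap\Sigma(\Delta)$. Because $\Sigma(\Delta)\subseteq\Sigma(N)$, it suffices to prove the cleaner statement that $E\cap\Sigma(N)\subseteq E[2]$.

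The heart of the argument is that $w_N$ acts on the finite group $E\cap\Sigma(N)$, and I would compare the sign of its action coming from $E$ with the sign coming from $\Sigma(N)$. The involution $w_N$ stabilizes $E$, since the newform $f$ attached to $E$ is a $w_N$-eigenform, say $w_N f=\epsilon_N f$ with $\epsilon_N=\pm1$; and it stabilizes the Shimura subgroup $\Sigma(N)=\ker(J_0(N)\to J_1(N))$. On the $E$ side, the global root number of $f$ equals $-\epsilon_N$, and the odd analytic rank hypothesis forces root number $-1$, hence $\epsilon_N=+1$. Since $w_N$ acts on the pulled-back differential $\omega_f$ by $\epsilon_N$, and the only automorphism of an elliptic curve acting trivially on its cotangent space is the identity, this yields $w_N|_E=\mathrm{id}_E$.

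On the Shimura side I would invoke the structure theory of $\Sigma(N)$ (Ling--Oesterl\'e, Ribet) to the effect that $w_N$ acts on $\Sigma(N)$ as multiplication by $-1$; equivalently, this can be read off from the Cartier duality between $\Sigma(N)$ and the cuspidal group, using that $w_N$ interchanges the cusps $0$ and $\infty$ while preserving the Weil pairing. Granting both signs, the single involution $w_N$ restricts on $E\cap\Sigma(N)$ simultaneously to $+\mathrm{id}$ (from $E$) and to $-\mathrm{id}$ (from $\Sigma(N)$). Hence $\mathrm{id}=-\mathrm{id}$ on $E\cap\Sigma(N)$, so every element is annihilated by $2$, giving $E\cap\Sigma(N)\subseteq E[2]$ and completing the proof.

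The main obstacle is the bookkeeping of the two signs, and in particular pinning down that $w_N$ acts by $-1$ rather than $+1$ on $\Sigma(N)$: this is the one step genuinely requiring external input from the theory of the Shimura subgroup. Reassuringly, the consistency of the two signs is exactly what makes odd analytic rank the correct hypothesis, since it is precisely the opposite sign on $E$ that collapses the intersection into $E[2]$. The functional-equation computation on $E$ is standard but must be done with care about conventions, to ensure that odd rank produces $w_N|_E=+\mathrm{id}$ rather than $-\mathrm{id}$.
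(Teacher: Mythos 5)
Your proposal is correct and follows essentially the same route as the paper's proof: reduce to showing $E\cap\Sigma(N)\subseteq E[2]$, note that $w_N$ acts as $-1$ on $\Sigma(N)$ (the paper cites Mazur, Chapter II, Proposition 11.7, rather than Ling--Oesterl\'e/Ribet) and as $+1$ on $E$ by the functional equation under the odd analytic rank hypothesis, and conclude that $-1=1$ on the intersection. The only differences are in the choice of reference for the sign on $\Sigma(N)$ and your added detail on why $\epsilon_N=+1$ forces $w_N|_E=\mathrm{id}_E$.
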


\begin{proof}
    The kernel of the composition is $E \cap \Sigma(\Delta)$, which is contained in $E \cap \Sigma(N)$. So, it suffices to show $E \cap \Sigma(N) \subset E[2]$.  By \cite[Chapter II, Proposition 11.7]{mazur77}, the Atkin-Lehner involution $w_N$ acts as $-1$ on $\Sigma(N)$. Furthermore, since $E$ has an odd analytic rank, it follows by looking at the functional equation for $L(E,s)$ that $w_N$ acts as $1$ on $E$. Therefore, $-1=1$ on $\Sigma(N)\cap E$ meaning that $\Sigma(N)\cap E\subseteq E[2]$.
\end{proof}

\begin{prop}\label{shimurakernelmagma}
 Let $E/\Q$ be a strong Weil curve of conductor $N\leq 800$ and odd analytic rank. Then the kernel of the composition map \[E\to J_0(N)\to J_\Delta(N)\] is trivial for all groups $\{\pm1\}\subseteq \Delta\subseteq (\Z/N\Z)^\times$.
\end{prop}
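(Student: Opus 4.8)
The plan is to combine the structural result of \Cref{shimurakernel2group} with a finite computation. By \Cref{shimurakernel2group}, for a strong Weil curve $E/\Q$ of conductor $N$ and odd analytic rank, the kernel of the composition $E\to J_0(N)\to J_\Delta(N)$ is contained in $E[2]$. So for each such $E$ it remains to rule out that any nontrivial $2$-torsion point lies in $\Sigma(\Delta)$, and since $\Sigma(\Delta)\subseteq \Sigma(N)$ it suffices to show $E\cap\Sigma(N)\subseteq E[2]$ is in fact trivial, i.e.\ that $E[2]\cap\Sigma(N)=0$. This reduces the proposition to a purely $2$-torsion statement about the Shimura subgroup, which is amenable to direct computation over the finite range $N\leq 800$.

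First I would enumerate, using \texttt{Magma}, all strong Weil curves $E/\Q$ of conductor $N\leq 800$ with odd analytic rank; these are available from the modular symbols / newform data or directly from LMFDB. For each such $E$ I would compute the Shimura subgroup $\Sigma(N)=\ker(u^*\colon J_0(N)\to J_1(N))$ and its intersection with the image of $E$ in $J_0(N)$. The key computational handle is that $\Sigma(N)$ is a well-understood finite subgroup: its order and group structure can be read off from the cuspidal data and the Eisenstein quotient, and there are known algorithms (going back to Mazur and Ling--Oesterl\'e) for computing it. Concretely, I expect to work with the embedding $E\hookrightarrow J_0(N)$ coming from the strong Weil parametrization (injective by the cited \cite[Remark 3.4]{DerickxOrlic23}) and then check whether the three nontrivial $2$-torsion points of $E$ meet $\Sigma(N)$; by the previous proposition no points of odd order or higher $2$-power order can occur, so this is a finite check on $E[2]$.

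The hard part will be making the computation of $E\cap\Sigma(N)$ both correct and feasible over the whole range $N\leq 800$: the Shimura subgroup is defined as a kernel of a map of Jacobians, and one must realize it concretely (for instance via its description in terms of the action on cusps, or as the kernel of $u^*$ computed on homology with the integral structure on modular symbols) rather than only up to isogeny, since the statement is about honest intersection of finite subgroups. I would verify the answer in two independent ways where possible: via the group-theoretic formula for $|\Sigma(N)|$ and via the explicit action of the Atkin--Lehner involution $w_N$, which acts as $-1$ on $\Sigma(N)$ (as used in \Cref{shimurakernel2group}) and as $+1$ on $E$ for odd analytic rank, to confirm the intersection lands in $E[2]$ before checking the $2$-torsion directly.

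Finally, I would organize the output as a table or certified list produced by the accompanying code, asserting that in every case the intersection $E[2]\cap\Sigma(N)$ is trivial, so that the kernel of $E\to J_\Delta(N)$ is trivial for all admissible $\Delta$. The upshot is that the map $E\to J_\Delta(N)$ is injective, which is exactly what is needed to transport the degree-pairing analysis of \Cref{degreepairing-section} from $X_0(N)$ to $X_\Delta(N)$ without losing information at the prime $2$.
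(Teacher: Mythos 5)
Your proposal is correct and is essentially the paper's own argument: reduce via $\Sigma(\Delta)\subseteq\Sigma(N)$ to showing $E\cap\Sigma(N)$ is trivial, then verify this by a finite computation over all strong Weil curves of odd analytic rank with $N\leq 800$ (the paper uses the built-in SageMath function \texttt{E.shimura\_subgroup()} rather than a bespoke Magma routine). Your extra refinement of first restricting to $E[2]$ via \Cref{shimurakernel2group} is a harmless optimization but does not change the substance of the proof.
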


\begin{proof}
    As in \Cref{shimurakernel2group}, it suffices to show that the Shimura subgroup $\Sigma(N)\cap E$ is trivial. We computed this group using the SageMath function \texttt{E.shimura\_subgroup()}. The code is available on \begin{center}
        \url{https://github.com/nt-lib/quartic-Xdelta/blob/main/odd_rank.sage}
    \end{center}
\end{proof}

\begin{remark}
The observed triviality of the Shimura subgroup $\Sigma(N)\cap E$ is specific to elliptic curves with odd analytic rank. We also computed the Shimura subgroups of all elliptic curves of even analytic rank and conductor $N\leq 800$. Here we list all the cases where it is nontrivial. We give the level $N$ and the group structure of the Shimura subgroup.

    \begin{center}
\begin{longtable}{|c|c||c|c||c|c||c|c||c|c|}

\hline
\addtocounter{table}{-1}
    $N$ & $\Sigma(N)\cap E$ & $N$ & $\Sigma(N)\cap E$ & $N$ & $\Sigma(N)\cap E$ & $N$ & $\Sigma(N)\cap E$ & $N$ & $\Sigma(N)\cap E$\\
    \hline
$11$ & $C_5$ &
$14$ & $C_3$ &
$15$ & $C_4$ &
$17$ & $C_4$ &
$19$ & $C_3$\\
$20$ & $C_2$ &
$21$ & $C_2$ &
$24$ & $C_2$ &
$26$ & $C_3$ &
$27$ & $C_3$\\
$32$ & $C_2$ &
$33$ & $C_2$ &
$35$ & $C_3$ &
$37$ & $C_3$ &
$38$ & $C_3$\\
$39$ & $C_2$ &
$40$ & $C_2$ &
$48$ & $C_2$ &
$52$ & $C_2$ &
$54$ & $C_3$\\
$55$ & $C_2$ &
$57$ & $C_2$ &
$64$ & $C_2$ &
$73$ & $C_2$ &
$77$ & $C_3$\\
$80$ & $C_2$ &
$80$ & $C_2$ &
$89$ & $C_2$ &
$113$ & $C_2$ &
$116$ & $C_2$\\
$128$ & $C_2$ &
$128$ & $C_2$ &
$129$ & $C_2$ &
$158$ & $C_3$ &
$161$ & $C_2$\\
$208$ & $C_2$ &
$212$ & $C_2$ &
$233$ & $C_2$ &
$278$ & $C_3$ &
$291$ & $C_2$\\
$326$ & $C_3$ &
$353$ & $C_2$ &
$370$ & $C_3$ &
$464$ & $C_2$ &
$485$ & $C_3$\\
$593$ & $C_2$ &
$681$ & $C_2$ &
$692$ & $C_2$ & & & &\\
     
    \hline

\caption*{}
\end{longtable}
\end{center}
We see that the Shimura subgroup is cyclic for all elliptic curves $E/\Q$ of conductor $N\leq800$ and all cases when its size is $\geq4$ are those when $g(X_0(N))=1$. So, it makes sense to ask what is the possible size of $\Sigma(N)\cap E$ and is it bounded by $3$ when $N\geq18$. We can show that it is bounded. Indeed, the Cartier dual of the Shimura subgroup has trivial Galois action and hence consists entirely of rational points. In particular, Mazur's torsion theorem implies that $\#(\Sigma(N)\cap E) \leq 16$.
\end{remark}

We will use these two results in the next proposition to compute the degree of a parametrization $f': X_\Delta(N)\to E'$, where $E'/\Q$ is a strong Weil curve for $X_\Delta(N)$ in the isogeny class of $E$. Note that $E'$ may not be equal to $E$. For example, $E=X_0(11)$ is obviously a strong Weil curve for $X_0(11)$ and $E'=X_1(11)$ is obviously a strong Weil curve for $X_1(11)$. LMFDB labels of these curves are $E=\lmfdbec{11}{a}{2}$, $E'=\lmfdbec{11}{a}{3}$.
\begin{prop}\label{deltastrongweildegree}
    Let $E/\Q$ be a strong Weil curve of conductor $N$ and let us take $\{\pm1\}\subseteq \Delta\subseteq (\Z/N\Z)^\times$. Suppose that $d$ is the minimal degree of a parametrization $f: X_0(N)\to E$. If $E'/\Q$ is a strong Weil curve for $X_\Delta(N)$ in the isogeny class of $E$, then the minimal degree of a parametrization $f':X_\Delta(N)\to E'$ equals \[\frac{d\cdot \#((\Z/N\Z)^\times/\Delta)}{\#(E\cap \Sigma(\Delta))}.\]
\end{prop}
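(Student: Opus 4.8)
The plan is to reduce the statement to a single identity between endomorphisms of $E$ and then to evaluate that identity in two different ways. Since $E$ is a strong Weil curve of conductor $N$, it occurs with multiplicity one in $J_0(N)$, and by the nebentypus argument at the start of this section it occurs with multiplicity one in $J_\Delta(N)$ as well; as $E'$ is isogenous to $E$, the same holds for $E'$. Hence $\Hom_\Q(J_0(N),E)$ and $\Hom_\Q(J_\Delta(N),E')$ are both free of rank one, and in each case the minimal degree of a parametrization is attained at a generator and equals its self-pairing. I would write $g\colon X_0(N)\to E$ for the minimal parametrization, so that $g^*\colon E\to J_0(N)$ is injective by \cite[Remark 3.4]{DerickxOrlic23} and $g_*\circ g^*=[d]$ by \cite[Lemma 2.1]{DerickxOrlic23}; and $h\colon X_\Delta(N)\to E'$ for the minimal parametrization onto the given strong Weil curve, so that $h^*\colon E'\to J_\Delta(N)$ is again injective (the analogue of \cite[Remark 3.4]{DerickxOrlic23} for $X_\Delta(N)$) and $h_*\circ h^*=[d']$, where $d'$ is the quantity to be computed. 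Finally let $u_\Delta\colon X_\Delta(N)\to X_0(N)$ be the natural projection, of degree $e:=\#((\Z/N\Z)^\times/\Delta)$, so that $u_{\Delta,*}\circ u_\Delta^*=[e]$ on $J_0(N)$ and $\Sigma(\Delta)=\ker u_\Delta^*$.

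The key geometric step is to produce an isogeny $\phi\colon E\to E'$ of degree $s:=\#(E\cap\Sigma(\Delta))$ satisfying
\[
u_\Delta^*\circ g^* \;=\; h^*\circ\phi. \qquad (\star)
\]
I would obtain this by comparing images. Because $g^*$ is injective and $\ker u_\Delta^*=\Sigma(\Delta)$, the composite $u_\Delta^*\circ g^*\colon E\to J_\Delta(N)$ has kernel $E\cap\Sigma(\Delta)$; by multiplicity one its image is the unique elliptic curve $B\subseteq J_\Delta(N)$ isogenous to $E$. The injective map $h^*$ has the same one-dimensional image $B$, so $h^*$ is an isomorphism onto $B$ and $\phi:=(h^*)^{-1}\circ(u_\Delta^*\circ g^*)\colon E\to E'$ is a well-defined isogeny over $\Q$ with $h^*\circ\phi=u_\Delta^*\circ g^*$. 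Comparing degrees of the two maps onto $B$ gives $\deg\phi=\#\ker(u_\Delta^*\circ g^*)=\#(E\cap\Sigma(\Delta))=s$ (this also reproves $E'\cong E/(E\cap\Sigma(\Delta))$).

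Dualizing $(\star)$ under the canonical principal polarizations, and using that $(u^*)^\vee=u_*$ for a finite map of curves, I get
\[
g_*\circ u_{\Delta,*} \;=\; \widehat{\phi}\circ h_*, \qquad (\star\star)
\]
where $\widehat{\phi}$ is the dual isogeny, with $\widehat{\phi}\circ\phi=[s]$. Now I would evaluate the endomorphism $\widehat{\phi}\circ h_*\circ h^*\circ\phi$ of $E$ in two ways. Using $h_*\circ h^*=[d']$ one finds $\widehat{\phi}\circ[d']\circ\phi=[d']\,\widehat{\phi}\circ\phi=[d's]$. Grouping the factors as $(\widehat{\phi}\circ h_*)\circ(h^*\circ\phi)$ and substituting $(\star\star)$ and $(\star)$ gives $(g_*\circ u_{\Delta,*})\circ(u_\Delta^*\circ g^*)=g_*\circ[e]\circ g^*=[e]\,g_*\circ g^*=[ed]$. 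Equating the two expressions in $\End E$ forces $d's=ed$, that is
\[
d'=\frac{ed}{s}=\frac{d\cdot\#((\Z/N\Z)^\times/\Delta)}{\#(E\cap\Sigma(\Delta))}.
\]

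The main obstacle is the geometric step $(\star)$. Unlike in \Cref{lem:degree_pairing_covering_curve}, the restriction of $u_\Delta^*$ to the copy of $E$ in $J_0(N)$ has nontrivial kernel, namely the Shimura subgroup $E\cap\Sigma(\Delta)$, so that lemma does not apply verbatim, and it is precisely this kernel that produces the denominator. The points that need care are the identification of the common image $B$ via multiplicity one, the fact that a strong Weil parametrization of $X_\Delta(N)$ has injective pullback, and the compatibility $(u^*)^\vee=u_*$ used to pass from $(\star)$ to $(\star\star)$; once these are in place the remaining computation is the short two-way evaluation above.
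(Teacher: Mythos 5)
Your proposal is correct and follows essentially the same route as the paper: both arguments compare the composite $X_\Delta(N)\to X_0(N)\to E$ with the strong Weil parametrization of $X_\Delta(N)$, identify the connecting isogeny between $E$ and $E'$, and show that its degree is $\#(E\cap\Sigma(\Delta))$ because the kernel of $E\to J_0(N)\to J_\Delta(N)$ is exactly $E\cap\Sigma(\Delta)$ while the strong Weil pullback is injective. The only cosmetic difference is at the end, where the paper simply multiplies degrees of the composed curve morphisms, $\deg g=\deg\varphi\cdot\deg f'$, whereas you reach the same identity $d's=ed$ by dualizing and evaluating $\widehat{\phi}\circ h_*\circ h^*\circ\phi$ in two ways.
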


\begin{proof} We consider the composition map $g:X_\Delta(N)\to X_0(N)\to E$ of degree $d\cdot\#((\Z/N\Z)^\times/\Delta)$. Let us take a fixed point $P\in X_\Delta(N)$. We may (by translating) without loss of generality suppose that $g(P)=0_E$, $f'(P)=0_{E'}$. By the universal property of the Jacobian, $f$ and $g$ factor uniquely through $J_\Delta(N)$. Since $E'$ is a strong Weil curve for $X_\Delta(N)$, $g$ factors through $f'$ and we have \[g^*:E\xrightarrow{\varphi^*} E'\xrightarrow{(f')^*} J_\Delta(N)\] for some nonzero isogeny $\varphi:E'\to E$. Here we slightly abused the notation by writing $E$ and $E'$ instead of their Jacobians. Therefore, we have the following commutative diagram ($u_\Delta:X_\Delta(N)\to X_0(N)$ is the natural projection map):

\[ \begin{tikzcd}
X_\Delta(N) \arrow{r}{f'} \arrow[swap]{d}{u_\Delta} \arrow[dashrightarrow]{dr}{g} & E' \arrow{d}{\varphi} \\%
X_0(N) \arrow{r}{f}& E
\end{tikzcd}
\]

    The kernel of $g^*$ is $E \cap \Sigma(\Delta)$. Since $(f')^*$ is injective by \cite[Remark 3.4]{DerickxOrlic23}, we get that the kernel of $\varphi^*$ equals $E\cap\Sigma(\Delta)$ as well. This means that we have \[\deg f'\cdot \#(E \cap \Sigma(\Delta))=\deg f'\cdot \deg \varphi=\deg g=d\cdot\#((\Z/N\Z)^\times/\Delta).\] 
\end{proof}

Combining this with \Cref{shimurakernel2group} and \Cref{shimurakernelmagma} gives the following corollary.
\begin{cor}\label{deltatetraelliptic_condN}
    If $E$ has odd analytic rank, then the minimal degree of the modular parametrization $f':X_\Delta(N)\to E'$ is equal to the numerator of $d\cdot\#((\Z/N\Z)^\times/\Delta)/4$. If we additionally have $N\leq 800$, then it equals $d\cdot\#((\Z/N\Z)^\times/\Delta)$ and the composition $X_\Delta(N) \to X_0(N) \to E$ is a strong Weil parametrization.
\end{cor}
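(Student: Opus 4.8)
The plan is to derive both assertions directly from \Cref{deltastrongweildegree}, which already gives the minimal degree of $f':X_\Delta(N)\to E'$ as $m/s$, where $m:=d\cdot\#((\Z/N\Z)^\times/\Delta)$ and $s:=\#(E\cap\Sigma(\Delta))$. Because $m/s$ is the degree of an honest morphism it is a positive integer, so $s\mid m$. Hence the whole corollary reduces to controlling the single integer $s$, and the hypotheses on $E$ (odd analytic rank, and additionally $N\le 800$) enter only through \Cref{shimurakernel2group} and \Cref{shimurakernelmagma}.

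I would dispatch the second assertion first, since it is the clean case and the one the paper actually uses (all relevant conductors are at most $191$). If $N\le 800$ and $E$ has odd analytic rank, \Cref{shimurakernelmagma} gives $E\cap\Sigma(\Delta)=0$, so $s=1$ and the minimal degree is exactly $m$. Triviality of $E\cap\Sigma(\Delta)$ also means the composite $E\xrightarrow{f^*}J_0(N)\xrightarrow{u_\Delta^*}J_\Delta(N)$ is injective, since $f^*$ is injective by \cite[Remark 3.4]{DerickxOrlic23} and the restriction of $u_\Delta^*$ to the copy of $E$ has trivial kernel. Thus $E$ is an optimal quotient of $J_\Delta(N)$, so $E'=E$ and the composition $X_\Delta(N)\to X_0(N)\xrightarrow{f}E$ is itself a strong Weil parametrization.

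For the first assertion, \Cref{shimurakernel2group} gives $E\cap\Sigma(\Delta)\subseteq E\cap\Sigma(N)\subseteq E[2]\cong(\Z/2\Z)^2$, so $s\in\{1,2,4\}$. Combined with $s\mid m$ this yields $s\mid\gcd(m,4)$, and since the numerator of $m/4$ in lowest terms is $m/\gcd(m,4)$, the minimal degree $m/s$ is a multiple of that numerator. The genuine content of the assertion is the reverse divisibility $\gcd(m,4)\mid s$, i.e. that the $2$-part of $m$ (truncated at $4$) is completely absorbed by $E\cap\Sigma(\Delta)$. This is the step I expect to be the main obstacle, since the elementary divisibilities above do not force it. I would try to settle it by analysing the minimal isogeny $\varphi:E'\to E$ of degree $s$ produced in the proof of \Cref{deltastrongweildegree}: because $E'$ is the optimal (strong Weil) curve for $X_\Delta(N)$, the degree $m/s$ of the optimal parametrization should be reduced with respect to the $2$-torsion ambiguity, which is precisely the identity $s=\gcd(m,4)$.
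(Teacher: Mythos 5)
Your treatment of the second assertion is correct and is exactly the paper's (implicit) argument: \Cref{shimurakernelmagma} gives $E\cap\Sigma(\Delta)=0$, so $s=1$ in \Cref{deltastrongweildegree}, and the injectivity of $u_\Delta^*\circ f^*:E\to J_\Delta(N)$ (which is precisely the triviality of that kernel combined with \cite[Remark 3.4]{DerickxOrlic23}) shows $E'=E$ and that the composite is optimal. The paper offers no further argument for the corollary beyond ``combining'' \Cref{deltastrongweildegree} with \Cref{shimurakernel2group} and \Cref{shimurakernelmagma}, so you have not missed a hidden ingredient.

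For the first assertion, your diagnosis is the important point: the elementary divisibilities give only $s\mid\gcd(m,4)$, hence that the minimal degree $m/s$ is a \emph{multiple} of the numerator of $m/4$, and the reverse divisibility $\gcd(m,4)\mid s$ is not forced. Do not try to close that gap via the minimal isogeny $\varphi:E'\to E$: the equality $s=\gcd(m,4)$ is in fact false, and the first sentence as literally stated is inconsistent with the second. Take $N=37$, $E=\lmfdbec{37}{a}{1}$ (analytic rank $1$, modular degree $d=2$) and $\Delta=\langle 4\rangle$ of index $2$, so $m=4$; the first sentence would give minimal degree $1$, while the second sentence (and \Cref{shimurakernelmagma}, since $37\le 800$) gives minimal degree $4$, which is what the paper actually uses when it exhibits $X_{\langle 4\rangle}(37)$ as positive rank tetraelliptic. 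The statement should be read (or corrected) as saying that the minimal degree is a multiple of the numerator of $m/4$, which is exactly what you proved; the ``equal to'' is an error in the paper, not a missing step in your argument.
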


\begin{remark}
    From the proof of \Cref{deltastrongweildegree} it follows that for elliptic curves of odd analytic rank either $E\cong E'$ or they are 2-isogenous. Additionally, for $N\leq800$ we must have $E \cong E'$.

    This does not necessarily hold for elliptic curves with even analytic rank. For example, the previously mentioned curves $E=X_0(11)=\lmfdbec{11}{a}{2}$ and $E'=X_1(11)=\lmfdbec{11}{a}{3}$ have rank $0$ over $\Q$ and are $5$-isogenous but not $2$-isogenous.
\end{remark}

\Cref{deltastrongweildegree} is useful because the elliptic curve modular degree data is available on LMFDB. We will use this for many curves $X_\Delta(N)$ to prove that they are not positive rank tetraelliptic.

\begin{cor}\label{tetraelliptic_condN}
    The intermediate modular curve $X_\Delta(N)$ is not positive rank tetraelliptic for \[N\in\{43,57,58,67,73,77,79,82,88.91,92,99,121,123,128,131,141,143,145,155\}\] with all possible $\{\pm1\}\subseteq \Delta\subsetneq (\Z/N\Z)^\times$ and also for the following pairs of $(N,\Delta)$:
\begin{center}
\begin{longtable}{|c|c||c|c||c|c|}

\hline
\addtocounter{table}{-1}
    $N$ & $\Delta\neq$ & $N$ & $\Delta\neq$ & $N$ & $\Delta\neq$\\
    \hline
    $37$ & $\left<4\right>$ &
    $53$ & $\left<4\right>$ &
    $55$ & $\left<-1,4\right>$\\
    $61$ & $\left<4\right>$ &
    $65$ & $\left<2,12^2\right>$ &
    $65$ & $\left<4,12\right>$\\
    $65$ & $\left<4,24\right>$ &
    $89$ & $\left<9\right>$ &
    $101$ & $\left<4\right>$\\
     
    \hline

\caption*{}
\end{longtable}
\end{center}
\end{cor}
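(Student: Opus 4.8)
The plan is to leverage $\Cref{deltastrongweildegree}$ together with the modular degree data on LMFDB to rule out the existence of any degree $4$ map $X_\Delta(N)\to E'$ for the listed levels and groups. Concretely, for each positive rank elliptic curve $E/\Q$ of conductor exactly $N$ that appears as an isogeny factor of $J_0(N)$ (and hence of $J_\Delta(N)$ with the same multiplicity), I would consider the strong Weil curve $E'$ for $X_\Delta(N)$ in the isogeny class of $E$ and compute the minimal degree of a parametrization $f':X_\Delta(N)\to E'$. By $\Cref{deltastrongweildegree}$ this minimal degree equals $d\cdot\#((\Z/N\Z)^\times/\Delta)/\#(E\cap\Sigma(\Delta))$, where $d$ is the modular degree of $E$ read off from LMFDB. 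The goal is to show this quantity exceeds $4$ in every case, so that no degree $4$ map to a positive rank elliptic curve can exist.

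First I would extract, for each level $N$ in the list, the modular degrees $d$ of the positive rank elliptic curves of conductor $N$ from LMFDB. Since every $E$ in consideration has positive rank and the sign of its functional equation forces the analytic rank to be odd (by the parity conjecture, or for these specific small conductor curves by direct verification), I can invoke $\Cref{deltatetraelliptic_condN}$: for $N\leq800$ the minimal degree is simply $d\cdot\#((\Z/N\Z)^\times/\Delta)$, with no reduction from the Shimura subgroup since $E\cap\Sigma(\Delta)\subseteq E\cap\Sigma(N)$ is trivial by $\Cref{shimurakernelmagma}$. Thus for the levels listed with ``all possible $\Delta$'', it suffices to check that $d\cdot\#((\Z/N\Z)^\times/\Delta)>4$ for every proper subgroup $\{\pm1\}\subseteq\Delta\subsetneq(\Z/N\Z)^\times$ and every positive rank $E$ of conductor $N$. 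The largest index $\#((\Z/N\Z)^\times/\Delta)$ occurs for $\Delta=\{\pm1\}$, but the smallest relevant index over proper subgroups is what controls the bound, so the verification reduces to checking that even the cheapest available map has degree exceeding $4$.

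For the table of specific pairs $(N,\Delta)$ listed as exclusions (the entries $\Delta\neq\langle4\rangle$ and so on), the logic is complementary: those particular groups $\Delta$ are precisely the ones for which $d\cdot\#((\Z/N\Z)^\times/\Delta)=4$ holds, giving genuine degree $4$ maps and hence placing them among the curves with infinitely many quartic points (they appear in $\Cref{deg4mapprop}$). So for each such $N$ I would verify that for every \emph{other} admissible group $\Delta$ the product $d\cdot\#((\Z/N\Z)^\times/\Delta)$ is strictly larger than $4$, confirming that only the excluded group yields a degree $4$ parametrization. These are finite and explicit checks, best carried out by enumerating the subgroup lattice of $(\Z/N\Z)^\times$ containing $\{\pm1\}$ and computing the index against the fixed modular degree $d$.

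The main obstacle is bookkeeping rather than conceptual: I must ensure that the relevant $E$ indeed has odd analytic rank (so that $\Cref{deltatetraelliptic_condN}$ applies and the Shimura kernel contributes no reduction), and that I have correctly identified \emph{all} positive rank elliptic curves of conductor $N$, including multiplicities, since a single overlooked isogeny factor could admit a low degree map. A subtle point is that $\Cref{deltastrongweildegree}$ gives only the minimal degree to the \emph{strong Weil curve} $E'$; a priori a non-minimal map or a map to a different curve in the isogeny class could have smaller degree, but by $\Cref{prop:derickx-orlic-quadratic-form}$ and the minimality of the strong Weil parametrization this does not happen, so the computed value is genuinely the minimum. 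Once these verifications are complete, the conclusion that $X_\Delta(N)$ is not positive rank tetraelliptic for the listed cases follows immediately, since any degree $4$ map to a positive rank $E$ would yield a parametrization of degree $4$, contradicting the computed minimal degrees.
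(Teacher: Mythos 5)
Your proposal is correct and follows essentially the same route as the paper: combine \Cref{deltastrongweildegree} with the triviality of $E\cap\Sigma(N)$ from \Cref{shimurakernelmagma} (via \Cref{deltatetraelliptic_condN}) to get that the minimal degree of a parametrization $X_\Delta(N)\to E'$ is $d\cdot\#((\Z/N\Z)^\times/\Delta)$, then check against the LMFDB modular degrees that this exceeds $4$ for every listed $(N,\Delta)$, using that any map to a positive rank elliptic curve factors through the strong Weil curve. The one verification the paper makes explicit that you should state as well is that for these levels every positive rank elliptic isogeny factor of $J_0(N)$ has conductor exactly $N$ and analytic rank $1$, so no factor of smaller conductor (which would require the machinery of \Cref{section_condM}) needs separate treatment.
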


\begin{proof}
    For all these levels $N$ the only positive rank elliptic curves $E/\Q$ appearing in the decomposition of $J_0(N)$ have conductor equal to $N$. Additionally, all these curves $E$ are of (analytic) rank $1$. Let $f: X_0(N) \to E$ be a a strong Weil parametrization. Then \Cref{deltastrongweildegree} tells us that the composition $f\circ u_\Delta: X_\Delta(N)\to X_0(N) \to E$ is a strong Weil parametrization as well. The map $f\circ u_\Delta$  has degree $\deg f\cdot\#((\Z/N\Z)^\times/\Delta)$.
    
    For levels $N$ in the first list of this corollary we have $\deg f \geq5$ and for pairs $(N,\Delta)$ in this table we have $\deg f=2$ and $\#((\Z/N\Z)^\times/\Delta)\geq3$. The statement follows since every map to a positive rank elliptic curve has to factor through a strong Weil curve with positive rank.
\end{proof}

\subsection{Conductor strictly smaller then $N$}\label{section_condM}
Let $E'$ be an elliptic curve of conductor $<N$ occuring as an isogeny factor of $J_\Delta(N)$. In this case we have $\textup{Cond } E'=M\mid N$, $M<N$. 

Our goal is to rule out the existence of a degree $4$ map $X_\Delta(N) \to E'$ for the curves $X_\Delta(N)$ that do not occur in \Cref{mainthm} and for which the curve $X_0(N)$ has infinitely many quartic points (these levels $N$ are listed in \Cref{quarticthmx0n} - if $X_0(N)$ has only finitely many quartic points, we are immediately done). There are only four such levels $N$, namely $N\in\{74,86,111, 159\}$. For $N=74,111$ the only possible $E$ is $\lmfdbec{37}{a}{1}$, for $N=86$ it is $E=\lmfdbec{43}{a}{1}$, and for $N=159$ it is $E=\lmfdbec{53}{a}{1}$. Notice that these three curves are the only ones in their $\Q$-isogeny classes, and thus they do not admit any isogenies over $\Q$ except for the multiplication by $n$ maps.

Let us again consider a strong Weil curve $E/\Q$ for $X_0(M)$ in the isogeny class of $E'$. Note that in our four cases we have $E'=E$ since the isogeny classes in question contain only one element. But we will not make use of this since we want to present a method how to do the computation in the case where there are non-trivial isogenies over $\Q$. Let $\psi: E \to E'$ be an isogeny of minimal degree then we have the following diagram:

\begin{equation}\label{commutativediagram}
    X_\Delta(N)\xrightarrow{u_\Delta} X_0(N)\xrightarrow{\iota_{d,N,M}} X_0(M) \xrightarrow{f} E\xrightarrow{\psi} E'.
\end{equation}

\begin{comment}
\begin{equation}\label{commutativediagram}
\begin{tikzcd}
X_\Delta(N) \arrow{r}{\iota_{d,N,M,\Delta}} \arrow[swap]{d}{u_{\Delta,N}} & X_{\Delta'}(M) \arrow{r}{f'} \arrow[swap]{d}{u_{\Delta',M}} & E' \arrow{d}{\varphi}\\
X_0(N) \arrow{r}{\iota_{d,N,M}} & X_0(M) \arrow{r}{f} & E
\end{tikzcd}
\end{equation}
Here $\Delta'$ is the image of $\Delta$ under the natural projection $(\Z/N\Z)^\times\to(\Z/M\Z)^\times$, $u_{\Delta,N},u_{\Delta',M}$ are natural projection maps, and $f,f'$ are minimal degree modular parametrizations. For every divisor $d\mid N/M$ we have degeneracy maps \[\iota_{d,N,M}:X_0(N)\to X_0(M), \ \iota_{d,N,M}(\tau)=d\tau \textup{ for } \tau \in \mathcal{H}^*,\]\[\iota_{d,N,M,\Delta}:X_\Delta(N)\to X_{\Delta'}(M), \ \iota_{d,N,M,\Delta}(\tau)=d\tau \textup{ for } \tau \in \mathcal{H}^*.\]
An important thing to note is that, in general, $\iota_{d,N,M,\Delta}$ is not defined over $\Q$. \maarten{I'm not sure, are they really not defined over $\Q$, I'd need to check.} From the definition of the degeneracy maps we easily get the left square. The right square was proved in \Cref{deltastrongweildegree}.
\end{comment}
Here $\iota_{d,N,M}$ is the degeneracy map. For every divisor $d\mid N/M$ we have a degeneracy map
\[\iota_{d,N,M}:X_0(N)\to X_0(M), (E,G)\mapsto (E/G[d], (G/G[d])[M]).\]
It is defined over $\Q$ and acts on $\tau \in \mathcal{H}^*$ in the extended upper half-plane as
\[\iota_{d,N,M}(\tau)=d\tau.\]

\Cref{prop:derickx-orlic-quadratic-form} tells us that the degree map $\deg: \Hom_\Q(X_\Delta(N),E') \to \Z$ can be extended to a positive definite quadratic form on $\Hom_\Q(J_\Delta(N),E') \cong \Z^n$ where $n$ is the number of positive divisors of $N/M$. If we can find the basis for $\Hom_\Q(J_\Delta(N),E)$ and the coefficients of this quadratic form, we will be able to determine all possible degrees of rational morphisms $X_\Delta(N)\to E'$.

\begin{prop}\label{deltashimura_condM<N}
    Let $(N,E)\in\{(74,\lmfdbec{37}{a}{1}),(86,\lmfdbec{43}{a}{1},2),(111,\lmfdbec{37}{a}{1}),(159,\lmfdbec{53}{a}{1})\}$. Denote the conductor of $E$ by $M$ and let us take $\{\pm1\}\subseteq\Delta\subseteq(\Z/N\Z)^\times$ and let $n$ be the number of divisors of $N/M$. %If $u_\Delta:X_\Delta(N)\to X_0(N)$ is the natural projection and 
    Let \[\xi_{E,N}=(f\circ\iota_{1,N,M},\ldots, f\circ\iota_{N/M,N,M}):X_0(N)\to E^n\] (we use the notation from \cite[Definition 3.8]{DerickxOrlic23}), then the composition \[u^*_\Delta\circ\xi^*_{E,N}:E^n\rightarrow J_0(N)\rightarrow J_\Delta(N)\] has trivial kernel. Therefore, the maps $f\circ\iota_{d,N,M}\circ u_\Delta$ form a basis for $\Hom_\Q(J_\Delta(N),E)$.
\end{prop}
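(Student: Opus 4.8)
The plan is to prove the triviality of the kernel of $u^*_\Delta\circ\xi^*_{E,N}$ by factoring it through the analogous map on $X_0(N)$ and reducing the problem to two independent claims: first, that $\xi^*_{E,N}: E^n \to J_0(N)$ has trivial kernel, and second, that the restriction of $u^*_\Delta$ to the image $A \subseteq J_0(N)$ of $\xi^*_{E,N}$ (which is the abelian subvariety isogenous to $E^n$) has trivial kernel. The second claim is precisely the hypothesis needed to invoke \Cref{lem:degree_pairing_covering_curve} with $u = u_\Delta$, $C = X_\Delta(N)$, $C' = X_0(N)$, so once both claims are in place the conclusion that $f\circ\iota_{d,N,M}\circ u_\Delta$ form a basis follows immediately from part (1) of that lemma.

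First I would establish that $\xi^*_{E,N}$ has trivial kernel. This is a statement entirely about $X_0(N)$ and the degeneracy maps, so I expect it to be handled by the machinery of \cite{DerickxOrlic23} referenced via the notation of \cite[Definition 3.8]{DerickxOrlic23}; concretely, one checks that the images $f\circ\iota_{d,N,M}$ for the distinct divisors $d \mid N/M$ give rise to linearly independent maps whose span realizes $E$ with its full multiplicity $n$ in $J_0(N)$, so that $\xi_{E,N}$ is surjective up to isogeny and its dual $\xi^*_{E,N}$ is injective. Since in all four cases the isogeny class of $E$ contains only one curve and $n$ equals the number of divisors of $N/M$ (which is $2$ for $N=74,86,159$ and $2$ for $N=111$ since $N/M=3$ there, so $n=2$ in each case), the multiplicity bookkeeping is small and can be verified directly, likely by a \texttt{Magma} computation of the relevant Hecke-module or by citing the corresponding result for $X_0(N)$.

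The main obstacle is the second claim: controlling $u^*_\Delta$ on the subvariety $A$. Here the relevant kernel is governed by the Shimura-type subgroup $\Sigma(\Delta) = \ker u^*_\Delta$ intersected with $A$, in analogy with the conductor-$N$ case treated in \Cref{section_condN}. Since $E$ now sits inside $J_0(N)$ not as a single copy coming from a conductor-$N$ newform but through the degeneracy maps from level $M$, I would need to verify that $A \cap \Sigma(\Delta)$ is trivial for every admissible $\Delta$. The cleanest route is a direct computation: for each of the four pairs $(N,E)$ one computes the Shimura subgroup $\Sigma(N)$ (hence $\Sigma(\Delta) \subseteq \Sigma(N)$ for all $\Delta$) and checks that its intersection with the $E$-isotypic part $A$ vanishes, exactly as in the verification underlying \Cref{shimurakernelmagma}. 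I expect this to be feasible because the levels are small and the isogeny classes are singletons, so the torsion constraints are rigid.

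Finally, once $u^*_\Delta\circ\xi^*_{E,N}$ is shown to have trivial kernel, the basis conclusion is formal: the composite $u^*_\Delta \circ \xi^*_{E,N}$ restricted to $A$ is an isogeny onto the unique abelian subvariety of $J_\Delta(N)$ isogenous to $E^n$, and dualizing together with part (1) of \Cref{lem:degree_pairing_covering_curve} shows that the pushforwards $(f\circ\iota_{d,N,M}\circ u_\Delta)_*$ span $\Hom_\Q(J_\Delta(N),E)$ and are $\Z$-linearly independent, hence form a basis. I would close by remarking that part (2) of \Cref{lem:degree_pairing_covering_curve} then gives the degree pairing on $\Hom_\Q(J_\Delta(N),E)$ as the degree pairing at level $N$ scaled by $\deg u_\Delta = \#((\Z/N\Z)^\times/\Delta)$, which is what makes the subsequent determination of all attainable degrees via \Cref{prop:derickx-orlic-quadratic-form} effective.
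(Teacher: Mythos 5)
Your proposal is correct and follows essentially the same route as the paper: the paper likewise reduces the statement to (i) injectivity of $\xi^*_{E,N}$, which it cites from \cite[Proposition 3.9]{DerickxOrlic23} since $E$ is a strong Weil curve for $X_0(M)$, and (ii) triviality of $\Sigma(N)\cap A$ for $A=\xi^*_{E,N}(E^n)$, verified by a SageMath computation and then transferred to $\Sigma(\Delta)$ via the inclusion $\Sigma(\Delta)\subseteq\Sigma(N)$, with the basis conclusion drawn exactly as you describe.
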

\begin{comment}
\begin{prop}\label{deltashimura_condM<N}
    Let $(N,E,p)\in\{(74,\lmfdbec{37}{a}{1},2),(86,\lmfdbec{43}{a}{1},2),(111,\lmfdbec{37}{a}{1},3),(159,\lmfdbec{53}{a}{1},3)\}$. Denote the conductor of $E$ by $M$ and let us take $\{\pm1\}\subseteq\Delta\subseteq(\Z/N\Z)^\times$. %If $u_\Delta:X_\Delta(N)\to X_0(N)$ is the natural projection and 
    Let \[\xi_{E,N}=(f'\circ\iota_{1,N,M},f'\circ\iota_{N/M,N,p}):X_0(N)\to E^2\] (we use the notation from \cite[Definition 3.8]{DerickxOrlic23}), then the composition \[u^*_\Delta\circ\xi^*_{E,N}:E^2\rightarrow J_0(N)\rightarrow J_\Delta(N)\] has trivial kernel. Therefore, the maps $f'\circ\iota_{d,N,M}\circ u_{\Delta,N}$ form a basis for $\Hom_\Q(J_\Delta(N),E)$.
\end{prop}
\end{comment}

\begin{proof}
     Let $A=\xi^*_{E,N}(E^n)\subseteq J_0(N)$. We computed that the Shimura subgroup $\Sigma(N)\cap A$ is trivial using the SageMath function \texttt{A.shimura-subgroup()}. Since $\Sigma(\Delta)\subseteq\Sigma(N)$, this implies that the group $\Sigma(\Delta)\cap A$ is also trivial. Together with the fact that $\xi^*_{E,N}$ is injective by \cite[Proposition 3.9]{DerickxOrlic23} (because $E$ is a strong Weil curve for $X_0(M)$), this proves the first part of the proposition. By \cite[Proof of Proposition 3.9]{DerickxOrlic23}, this means that the maps $f\circ\iota_{d,N,M}\circ u_\Delta$ form a basis for $\Hom_\Q(J_\Delta(N),E)$.
\end{proof}

A similar computation can be done for all levels $N$ and elliptic curves $E/\Q$ of conductor $M\mid N$. Similarly as in \Cref{shimurakernelmagma}, we expect that the kernel will be trivial for strong Weil elliptic curves of odd analytic rank.

The next result gives us a method to determine the  possible degrees of elements of $\Hom_\Q(X_\Delta(N),E')$ via the degree pairing $\left<\wideunder,\wideunder\right>$ defined in \Cref{degreepairing-section}.

\begin{prop}\label{quadraticform_coefficients}
    Suppose that we have a setup as in Diagram \ref{commutativediagram} and that the map $E^n\to J_\Delta(N)$ from \Cref{deltashimura_condM<N} has trivial kernel. If $\psi:E\to E'$ is a rational isogeny of minimal degree, then the quadratic form \[\deg \psi \cdot\#((\Z/N\Z)^\times/\Delta)\sum_{i\mid N/M}\sum_{j\mid N/M}x_ix_j\left<f\circ \iota_{i,N,M},f\circ \iota_{j,N,M}\right>\] represents the degrees of all rational morphisms $h:X_\Delta(N)\to E'$.
\end{prop}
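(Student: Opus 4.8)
The plan is to make the quadratic form of \Cref{prop:derickx-orlic-quadratic-form}, attached to $C=X_\Delta(N)$ and the elliptic curve $E'$, completely explicit in a convenient basis, by transporting the degree pairing from $\Hom_\Q(J_0(N),E)$ up to $\Hom_\Q(J_\Delta(N),E')$ through the two structural lemmas \Cref{lem:degree_pairing_covering_curve} and \Cref{lem:degree_pairing_isogeny}. Since $E'$ occurs in $J_\Delta(N)$ with multiplicity $n$ (the number of divisors of $N/M$), \Cref{prop:derickx-orlic-quadratic-form} guarantees that the degrees of all rational morphisms $X_\Delta(N)\to E'$ are exactly the values of a positive definite quadratic form on $\Hom_\Q(J_\Delta(N),E')\cong\Z^n$. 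Because any morphism to an elliptic curve can be translated so that a fixed rational point (a cusp, which exists since $X_\Delta(N)(\Q)\neq\emptyset$) maps to the origin without changing its degree, it suffices to identify this form in a basis.

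First I would record a basis. As $E$ is a strong Weil curve for $X_0(M)$, \cite[Proposition 3.9]{DerickxOrlic23} gives that $\xi^*_{E,N}$ is injective, so the maps $f\circ\iota_{d,N,M}$ ($d\mid N/M$) form a basis of $\Hom_\Q(J_0(N),E)$ and their image $A:=\xi^*_{E,N}(E^n)$ is the unique abelian subvariety of $J_0(N)$ isogenous to $E^n$. The standing hypothesis (supplied by \Cref{deltashimura_condM<N}) is that $u^*_\Delta\circ\xi^*_{E,N}\colon E^n\to J_\Delta(N)$ has trivial kernel; since $\xi^*_{E,N}$ maps $E^n$ isomorphically onto $A$, this says precisely that $u^*_\Delta$ restricted to $A$ is injective. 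I can therefore apply \Cref{lem:degree_pairing_covering_curve} to $u_\Delta\colon X_\Delta(N)\to X_0(N)$ with the basis $f\circ\iota_{d,N,M}$: its first conclusion is that the maps $f\circ\iota_{d,N,M}\circ u_\Delta$ form a basis of $\Hom_\Q(J_\Delta(N),E)$, and its second is that the degree pairing on $\Hom_\Q(J_\Delta(N),E)$ equals the degree pairing on $\Hom_\Q(J_0(N),E)$ scaled by $\deg u_\Delta=\#((\Z/N\Z)^\times/\Delta)$.

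Next I would apply \Cref{lem:degree_pairing_isogeny} with $C=X_\Delta(N)$, the maps $f\circ\iota_{d,N,M}\circ u_\Delta$, and the minimal-degree isogeny $\psi\colon E\to E'$. The hypothesis there is that the dual $\xi^\vee$ of $\xi:=\bigl((f\circ\iota_{d,N,M}\circ u_\Delta)_*\bigr)_d\colon J_\Delta(N)\to E^n$ has trivial kernel. Writing $\xi=(\xi_{E,N})_*\circ(u_\Delta)_*$ and dualizing (pushforward dualizes to pullback) shows $\xi^\vee=u^*_\Delta\circ\xi^*_{E,N}$, so this is the very map of \Cref{deltashimura_condM<N}; thus a single kernel computation discharges the hypotheses of both lemmas. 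The lemma then yields that the maps $\psi\circ f\circ\iota_{d,N,M}\circ u_\Delta$ form a basis of $\Hom_\Q(J_\Delta(N),E')$ and that the degree pairing is further scaled by $\deg\psi$.

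Combining the two scalings, in the basis $\{\psi\circ f\circ\iota_{d,N,M}\circ u_\Delta\}$ the $(i,j)$ entry of the degree pairing on $\Hom_\Q(J_\Delta(N),E')$ is $\deg\psi\cdot\#((\Z/N\Z)^\times/\Delta)\cdot\langle f\circ\iota_{i,N,M},f\circ\iota_{j,N,M}\rangle$. Feeding this Gram matrix into the quadratic form $x\mapsto\sum_{i,j}x_ix_j\langle\,\cdot\,,\,\cdot\,\rangle$ of \Cref{prop:derickx-orlic-quadratic-form}, whose values are the degrees of the rational morphisms $X_\Delta(N)\to E'$, produces exactly the displayed expression. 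I expect the only genuinely delicate point to be the bookkeeping of the duality identity $\xi^\vee=u^*_\Delta\circ\xi^*_{E,N}$ and the identification of $A$ with $\xi^*_{E,N}(E^n)$, which together let \Cref{deltashimura_condM<N} verify the kernel hypotheses of both lemmas; everything else is the formal functoriality of pushforward and pullback together with the bilinearity of the degree pairing.
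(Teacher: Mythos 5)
Your proposal is correct and follows essentially the same route as the paper: start from the quadratic form on $\Hom_\Q(J_0(N),E)$ in the basis $f\circ\iota_{d,N,M}$, scale by $\deg u_\Delta=\#((\Z/N\Z)^\times/\Delta)$ via \Cref{lem:degree_pairing_covering_curve}, and then by $\deg\psi$ via \Cref{lem:degree_pairing_isogeny}. The only difference is that you spell out the verification of the two lemmas' kernel hypotheses (the identification $\xi^\vee=u^*_\Delta\circ\xi^*_{E,N}$ and $A=\xi^*_{E,N}(E^n)$), which the paper leaves implicit by simply invoking \Cref{deltashimura_condM<N}.
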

\begin{proof}
    By \cite[Proof of Theorem 1.12]{DerickxOrlic23}, the quadratic form \[\sum_{i\mid N/M}\sum_{j\mid N/M}x_ix_j\left<f\circ \iota_{i,N,M},f\circ \iota_{j,N,M}\right>\] represents the degrees in $\Hom_\Q(X_0(N),E)$. \Cref{lem:degree_pairing_covering_curve} tells us that the degree pairing on $\Hom_\Q(X_\Delta(N),E)$ is the degree pairing on $\Hom_\Q(X_0(N),E)$ multiplied by $\deg u_\Delta=\#((\Z/N\Z)^\times/\Delta)$ and \Cref{lem:degree_pairing_isogeny} tells us that the degree pairing on $\Hom_\Q(X_\Delta(N),E')$ is the degree pairing on $\Hom_\Q(X_\Delta(N),E)$ multiplied by $\deg\psi$.
\end{proof}
Note that the maps $f\circ\iota_{i,N,M}$ go from $X_0(N)$ to $E$. This result is important because the ratio of the coefficients of the quadratic forms for $\Hom_\Q(J_\Delta(N),E)$ and $\Hom_\Q(J_0(N),E)$ is constant (it does not depend on $i$ and $j$). Therefore, we can compute the quadratic form for $\Hom_\Q(J_0(N),E)$ (which we know how to do) and scale it to obtain the quadratic form for $\Hom_\Q(J_\Delta(N),E)$.

The following result computes the coefficients of the quadratic form for $X_0(N)\to E$.

\begin{thm}[{\cite[Theorem 2.13]{DerickxOrlic23}}]\label{pairingcomputation}
Let $E$ be an elliptic curve of conductor $M$ with the corresponding newform $\sum_{n=1}^\infty a_nq^n$ and let $f:X_0(M)\to E$ be the modular parametrization of $E$. Assume that $\frac{N}{M}$ is either squarefree or coprime to $M$ and let $d_1$ and $d_2$ be divisors of $\frac{N}{M}$. We write $\gcd$ instead of $\gcd(d_1,d_2)$ and $\textup{lcm}$ instead of $\textup{lcm}(d_1,d_2)$ for simplicity. If we define
$$a=\left(\sum_{m^2\mid (d_1/\gcd)} \mu(m)a_{d_1/(\gcd m^2)}\right)\left(\sum_{m^2\mid (d_2/\gcd)} \mu(m)a_{d_2/(\gcd m^2)}\right),$$
where $\mu$ is the M\"obius function (when $\frac{d_1d_2}{\gcd^2}$ is squarefree, $a$ is equal to $a_{d_1d_2/\gcd^2}$), then
$$\left<f\circ\iota_{d_1,N,M},f\circ\iota_{d_2,N,M}\right>=\left[a\cdot \frac{\psi\left(N\right)}{\psi\left(\frac{M\textup{lcm}}{\gcd}\right)}\cdot \deg f \right].$$
Here $\psi(N)=N\prod_{q\mid N}(1+\frac{1}{q})$ is the index of the congruence subgroup $\Gamma_0(N)$ in $\SL_2(\Z)$.
\end{thm}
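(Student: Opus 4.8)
The goal is to compute the degree pairing $\left<f\circ\iota_{d_1,N,M},f\circ\iota_{d_2,N,M}\right>=f_*\circ\iota_{d_1,N,M,*}\circ\iota_{d_2,N,M}^*\circ f^*$ explicitly. The plan is to work on the level of the Hecke algebra acting on the $1$-dimensional Hecke eigenspace attached to $E$ inside $S_2(\Gamma_0(N))$. Since $\left<f\circ\iota_{d_1,N,M},f\circ\iota_{d_2,N,M}\right>$ is a rational endomorphism of the elliptic curve $f^*(E)\subset J_0(N)$, it is multiplication by an integer $[a\cdot c\cdot\deg f]$ for some rational number, and it suffices to identify this scalar by tracking how the composite $\iota_{d_1,N,M,*}\circ\iota_{d_2,N,M}^*$ acts on the newform $g=\sum a_nq^n$ associated to $E$.

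First I would recall that $f^*$ embeds $E$ into $J_0(M)$ and that $\iota_{d,N,M}^*$ is the pullback along the degeneracy map sending $\tau\mapsto d\tau$; dually, on modular forms/differentials these degeneracy operators act by $q\mapsto q^d$ up to normalization. The composition $\iota_{d_1,N,M,*}\circ\iota_{d_2,N,M}^*$ then lands back on the old subspace generated by $g$ and its images under the $V_e$ operators. The standard computation (cf. the theory of the pairing between degeneracy maps, e.g. Ribet's work and the $\langle V_{d_1}g, V_{d_2}g\rangle$ Petersson-type inner products) expresses $\iota_{d_1,*}\iota_{d_2}^*$ in terms of the Hecke operators $T_p$ and the $a_n$ via a multiplicativity argument. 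Reducing to the case $\gcd(d_1,d_2)=1$ by factoring out the common part explains why only $d_1d_2/\gcd^2$ enters, and the Möbius-corrected sums $\sum_{m^2\mid d_i/\gcd}\mu(m)a_{d_i/(\gcd m^2)}$ arise precisely from inverting the relation $a_{p^k}$ in terms of the Hecke recursion (i.e. expressing the ``primitive'' part of the degeneracy action, which coincides with $a_{d_1d_2/\gcd^2}$ exactly when that index is squarefree).

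The volume/index factor $\psi(N)/\psi(M\,\mathrm{lcm}/\gcd)$ should come from comparing normalizations of the pairing: the degree pairing is computed via intersection numbers on $J_0(N)$, which scale by the ratio of the relevant $\Gamma_0$-indices when one passes between levels through the degeneracy maps. Concretely, I would normalize using the fact that $\langle f,f\rangle=[\deg f]$ and that pulling back/pushing forward along $\iota_{d,N,M}$ multiplies the self-pairing by the index $[\Gamma_0(M):\Gamma_0(N)]=\psi(N)/\psi(M)$ appropriately distributed according to $d_1,d_2$; the appearance of $\mathrm{lcm}/\gcd$ reflects that the relevant intermediate level controlling the overlap of the two degeneracy images is $M\cdot\mathrm{lcm}/\gcd$.

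The main obstacle I anticipate is getting the index/normalization factor exactly right, including the squarefree-versus-coprime hypothesis on $N/M$. When $N/M$ is not squarefree, the degeneracy maps $V_{p^2}$ interact with the Hecke recursion in a way that forces the Möbius-sum correction rather than a single $a_{d_1d_2/\gcd^2}$; verifying that the stated formula correctly packages this — and that the hypothesis ``$N/M$ squarefree or coprime to $M$'' is exactly what makes the computation of $\iota_{d_1,*}\iota_{d_2}^*$ on the $g$-eigenspace diagonalizable with these coefficients — is the delicate combinatorial heart of the argument. I would cross-check the final scalar against the known special case $d_1=d_2=1$ (where the pairing must return $[\psi(N)/\psi(M)\cdot\deg f]$, the degree of $f\circ\iota_{1,N,M}$) and against small explicit examples via \texttt{Magma} before trusting the general formula.
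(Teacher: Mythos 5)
This statement is not proved in the paper at all: it is imported verbatim from \cite[Theorem 2.13]{DerickxOrlic23}, so there is no internal proof to compare against. Judged on its own terms, your proposal does identify the strategy used in the cited source: reduce the pairing $\langle f\circ\iota_{d_1,N,M},f\circ\iota_{d_2,N,M}\rangle=f_*\circ\iota_{d_1,N,M,*}\circ\iota_{d_2,N,M}^*\circ f^*$ to computing $\iota_{d_1,N,M,*}\circ\iota_{d_2,N,M}^*$ as an element of the Hecke algebra acting on the one-dimensional eigenline of the newform attached to $E$, observe that the result is multiplication by an integer, and normalize against $\langle g,g\rangle=[\deg g]$. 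Your consistency checks are also correct: $d_1=d_2=1$ returns $[\deg f\cdot\psi(N)/\psi(M)]$, the level $M\,\mathrm{lcm}/\gcd$ is indeed the intermediate level through which the common part of the two degeneracy maps factors, and the M\"obius sums are the inverse of the Hecke recursion at primes dividing $N/M$ to a power $\geq2$.

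As a proof, however, there is a genuine gap: the entire content of the theorem is the explicit formula for $\iota_{d_1,N,M,*}\circ\iota_{d_2,N,M}^*$, and you never derive it, deferring instead to ``the standard computation.'' Concretely, two things are missing. First, the double-coset (or $q$-expansion) computation showing that for coprime $d_1,d_2$ with $d_1d_2$ squarefree the correspondence acts on the $g$-eigenline as $[a_{d_1d_2}]$ times a degree factor, together with the prime-power case $p^k$ (for $p\nmid M$) where the Hecke relation forces the $\mu$-corrected sum rather than $a_{p^k}$ itself; this is exactly where the hypothesis ``$N/M$ squarefree or coprime to $M$'' enters, and asserting that it ``is exactly what makes the computation diagonalizable'' is not a substitute for carrying it out. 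Second, the bookkeeping producing $\psi(N)/\psi(M\,\mathrm{lcm}/\gcd)$ rather than $\psi(N)/\psi(M)$: one must factor each $\iota_{d_i,N,M}$ through $X_0(M\,\mathrm{lcm}/\gcd)$ and apply $u_*\circ u^*=[\deg u]$ to the common leg, which you gesture at but do not execute. Without these two steps the formula is verified only in the special cases you list, not established; a complete write-up should either perform the coset computation or reduce precisely to a reference for the identity $\iota_{1,*}\circ\iota_{p}^*=T_p$ and its prime-power and composite generalizations.
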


\begin{remark}\label{quadraticformdivisiblebydef_remark}
    Even if we do not have that $\frac{N}{M}$ is either squarefree or coprime to $M$, we can still prove that the coefficient is divisible by $\deg f$ because \[\left<f\circ\iota_{d_1,N,M},f\circ\iota_{d_2,N,M}\right>=[\deg f]\circ\left<\iota_{d_1,N,M},\iota_{d_2,N,M}\right>,\] see \cite[Start of the proof of Theorem 2.13]{DerickxOrlic23} for more details.
\end{remark}

\Cref{pairingcomputation} is useful because all items on the right-hand side are easily computable, and in fact already have been computed for all elliptic curves of conductor $\leq500,000$ and $\textup{lcm}(d_1,d_2)/\gcd(d_1,d_2) \leq 1,000$. This data is available in the LMFDB \cite{lmfdb}.

\begin{prop}\label{propquadraticforms_tetraelliptic}
    The modular curve $X_\Delta(N)$ is not positive rank tetraelliptic for $N\in\{74,86,111,159\}$ and all $\{\pm1\}\subseteq \Delta\subsetneq (\Z/N\Z)^\times$.

\begin{center}
\begin{longtable}{|c|c|c|c|}

\hline
\addtocounter{table}{-1}
    $N$ & $M$ & $E$ & \textup{quad. form for} $\Hom_\Q(J_0(N), E)$\\
    \hline

    $74$ & $37$ & $\lmfdbec{37}{a}{1}$ & $6x^2-8xy+6y^2$\\
    $86$ & $43$ & $\lmfdbec{43}{a}{1}$ & $6x^2-8xy+6y^2$\\
    $111$ & $37$ & $\lmfdbec{37}{a}{1}$ & $8x^2-12xy+8y^2$\\
    $159$ & $53$ & $\lmfdbec{53}{a}{1}$ & $8x^2-12xy+8y^2$\\
     
    \hline

\caption*{}
\end{longtable}
\end{center}
\end{prop}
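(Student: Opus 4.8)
The plan is to reduce the statement, via \Cref{quadraticform_coefficients}, to checking that the integer $4$ is not represented by an explicit binary positive definite quadratic form. By the discussion preceding the proposition, for each of the four levels the only positive-rank elliptic curve occurring (up to isogeny) as a factor of $J_\Delta(N)$ is the curve $E$ named in the table, of conductor $M$ with $M \mid N$ and $M < N$. Since $\lmfdbec{37}{a}{1}$, $\lmfdbec{43}{a}{1}$ and $\lmfdbec{53}{a}{1}$ are each alone in their $\Q$-isogeny classes, any target $E'$ of a degree-$4$ map satisfies $E' = E$, and the minimal isogeny $\psi\colon E \to E'$ is the identity with $\deg \psi = 1$. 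As any map $X_\Delta(N)\to E'$ factors through $J_\Delta(N)$, it suffices to show that no element of $\Hom_\Q(X_\Delta(N), E)$ has degree $4$.

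First I would check the hypotheses of \Cref{quadraticform_coefficients}. In every case $N/M$ is prime (equal to $2$ for $N=74,86$ and to $3$ for $N=111,159$), hence squarefree and coprime to $M$, and it has exactly $n=2$ divisors; \Cref{deltashimura_condM<N} then supplies the triviality of the kernel of $E^2 \to J_\Delta(N)$ that is required. Next I would compute the quadratic form on $\Hom_\Q(J_0(N), E)$ using \Cref{pairingcomputation} with $d_1,d_2$ ranging over $\{1, N/M\}$; the three pairing values are determined by $\deg f$, by the Fourier coefficient $a_{N/M}$ of the newform attached to $E$, and by the relevant index ratios. This yields exactly the forms tabulated: $6x^2 - 8xy + 6y^2$ for $N=74,86$ and $8x^2 - 12xy + 8y^2$ for $N=111,159$.

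With the base form in hand, \Cref{quadraticform_coefficients} shows that the degrees of all rational morphisms $X_\Delta(N)\to E'$ are exactly the values of $\deg\psi\cdot\#((\Z/N\Z)^\times/\Delta)$ times this form; since $\deg\psi=1$ this is $c\cdot Q(x,y)$ with $c:=\#((\Z/N\Z)^\times/\Delta)$. A brief reduction shows that both forms $Q$ are positive definite with minimum positive value $4$, attained at $(x,y)=\pm(1,1)$ (indeed $6-8+6=4$ and $8-12+8=4$, and each form takes only values $\geq 4$ away from the origin). Because $\{\pm1\}\subseteq\Delta\subsetneq(\Z/N\Z)^\times$ is a proper subgroup, $c=\deg u_\Delta\geq 2$, so every positive value of $c\cdot Q$ is at least $2\cdot 4=8>4$. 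Hence $4$ is not represented, there is no degree-$4$ morphism $X_\Delta(N)\to E'$, and $X_\Delta(N)$ is not positive rank tetraelliptic.

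I expect the main obstacle to be conceptual rather than computational: one must be sure that $E$ really is the only positive-rank elliptic factor of $J_\Delta(N)$, so that ruling out degree-$4$ maps to this single curve settles the whole tetraelliptic question. The remaining care lies in the bookkeeping of \Cref{pairingcomputation} (the correct Fourier coefficient and index ratio) and in confirming the kernel-triviality hypothesis of \Cref{deltashimura_condM<N}, which is precisely why that proposition is established immediately beforehand.
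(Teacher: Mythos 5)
Your proposal is correct and follows essentially the same route as the paper: compute the degree form on $\Hom_\Q(J_0(N),E)$ via \Cref{pairingcomputation}, use \Cref{deltashimura_condM<N} and \Cref{quadraticform_coefficients} to scale it by $\#((\Z/N\Z)^\times/\Delta)\geq 2$ (with $\deg\psi=1$ since each $E$ is alone in its isogeny class), and observe that the base forms have minimum positive value $4$, so the scaled forms never represent $4$. The only difference is that you spell out some hypothesis-checking that the paper relegates to the surrounding discussion of \Cref{section_condM}.
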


\begin{proof}
    These three elliptic curves are the only ones in their isogeny classes, therefore $\deg \psi=1$. Since $N/M$ is equal to $2$ or $3$, we have $n=2$ in all four cases and we can use \Cref{pairingcomputation} to compute the quadratic forms for $\Hom_\Q(J_0(N), E)$ (see \cite[Examples 1,2,3]{DerickxOrlic23} for more details). 
    
    By \Cref{deltashimura_condM<N}, the induced map $E^2\to J_0(N)\to J_\Delta(N)$ has trivial kernel, which means that we can use \Cref{quadraticform_coefficients} to obtain the quadratic form for $\Hom_\Q(J_\Delta(N), E)$. We just need to multiply the quadratic form for $\Hom_\Q(J_0(N), E)$ by $\#((\Z/N\Z)^\times/\Delta)$.
    
    We can easily check that the least positive integer attained by the quadratic forms in the table is $4$. Since $\#((\Z/N\Z)^\times/\Delta)\geq2$, this proves that there is no degree $4$ rational morphism from $X_\Delta(N)$ to $E$.
\end{proof}

\section{Proof of the main theorem}\label{sectionmainthm}

\begin{proof}[Proof of \Cref{mainthm}]
    For curves $X_\Delta(N)$ listed in the theorem, in \Cref{infinitelyquarticsection} we find a degree $4$ rational morphism to $\PP^1$ or a positive rank elliptic curve, or there is a degree $2$ projection to a curve $X_0(N)$ with infinitely many quadratic points. The pullbacks of rational or quadratic points are a source of infinitely many quartic points on $X_\Delta(N)$.

    All other intermediate curves $X_\Delta(N)$ have genus $g\geq8$ and \cite[Theorem 1.3]{Orlic2024Intermediate} tells us that all these curves have $\Q$-gonality greater than $4$. We eliminate all but finitely many levels $N$ in \Cref{preliminariessection}.
    
    From \Cref{degree4morphismcor} it follows that it is enough to prove that these curves $X_\Delta(N)$ do not admit a degree $4$ rational morphism to a positive rank elliptic curve, which is done in \Cref{deltatetraelliptic_rank0}, \Cref{tetraelliptic_condN}, and \Cref{propquadraticforms_tetraelliptic}.
\end{proof}

\bibliographystyle{siam}
\bibliography{bibliography}
\end{document}